\documentclass[a4paper,12pt,final]{amsart}
\usepackage{times,a4wide,mathrsfs,amssymb}
\usepackage{fdsymbol}

\newcommand{\C}{\mathbb{C}}
\newcommand{\ZZ}{\mathbb{Z}}

\newcommand{\QQ}{\mathbb{Q}}
\newcommand{\NN}{\mathbb{N}}
\newcommand{\PP}{\mathbb{P}}
\newcommand{\LL}{\mathbb{L}}
\newcommand{\GG}{\mathbb{G}}

\newcommand{\OO}{\mathcal O}

\newcommand{\Sy}{\mathfrak S}

\newcommand{\CC}{\mathcal C}

\newcommand{\MM}{\mathcal M}

\newcommand{\wt}{\widetilde}
\newcommand{\ima}{\hbox{Im}}
\newcommand{\rom}{\romannumeral}

    \makeatletter
\newcommand*{\da@rightarrow}{\mathchar"0\hexnumber@\symAMSa 4B }
\newcommand*{\da@leftarrow}{\mathchar"0\hexnumber@\symAMSa 4C }
\newcommand*{\xdashrightarrow}[2][]{%
  \mathrel{%
    \mathpalette{\da@xarrow{#1}{#2}{}\da@rightarrow{\,}{}}{}%
  }%
}
\newcommand{\xdashleftarrow}[2][]{%
  \mathrel{%
    \mathpalette{\da@xarrow{#1}{#2}\da@leftarrow{}{}{\,}}{}%
  }%
}
\newcommand*{\da@xarrow}[7]{%
  \sbox0{$\ifx#7\scriptstyle\scriptscriptstyle\else\scriptstyle\fi#5#1#6\m@th$}%
  \sbox2{$\ifx#7\scriptstyle\scriptscriptstyle\else\scriptstyle\fi#5#2#6\m@th$}%
  \sbox4{$#7\dabar@\m@th$}%
  \dimen@=\wd0 %
  \ifdim\wd2 >\dimen@
    \dimen@=\wd2 %
  \fi
  \count@=2 %
  \def\da@bars{\dabar@\dabar@}%
  \@whiledim\count@\wd4<\dimen@\do{%
    \advance\count@\@ne
    \expandafter\def\expandafter\da@bars\expandafter{%
      \da@bars
      \dabar@ 
    }%
  }%
  \mathrel{#3}%
  \mathrel{%
    \mathop{\da@bars}\limits
    \ifx\\#1\\%
    \else
      _{\copy0}%
    \fi
    \ifx\\#2\\%
    \else
      ^{\copy2}%
    \fi
  }%
  \mathrel{#4}%
}
\makeatother

\newtheorem{theorem}{Theorem}[section]
\newtheorem{claim}[theorem]{Claim}
\newtheorem{lemma}[theorem]{Lemma}
\newtheorem{corollary}[theorem]{Corollary}
\newtheorem{proposition}[theorem]{Proposition}
\newtheorem{conjecture}[theorem]{Conjecture}
\newtheorem{remark}[theorem]{Remark}
\newtheorem{definition}[theorem]{Definition}
\newtheorem{convention}{Conventions}

\newtheorem{problem}[theorem]{Problem}

\newtheorem{nonumbering}{Theorem}

\newtheorem{nonumberingt}{Acknowledgements}

\begin{document}
\author[Robert Laterveer]
{Robert Laterveer}

\address{Institut de Recherche Math\'ematique Avanc\'ee,
CNRS -- Universit\'e 
de Strasbourg,\
7 Rue Ren\'e Des\-car\-tes, 67084 Strasbourg CEDEX,
FRANCE.}
\email{robert.laterveer@math.unistra.fr}

\title{Algebraic cycles on a very special EPW sextic}

\begin{abstract} Motivated by the Beauville--Voisin conjecture about Chow rings of powers of $K3$ surfaces, we consider a similar conjecture for Chow rings of powers of EPW sextics. We prove part of this conjecture for the very special EPW sextic studied by Donten--Bury et alii. We also prove some other results concerning the Chow groups of this very special EPW sextic, and of certain related hyperk\"ahler fourfolds.
\end{abstract}

\keywords{Algebraic cycles, Chow groups, motives, finite--dimensional motives, weak splitting property, weak Lefschetz conjecture for Chow groups, multiplicative Chow--K\"unneth decomposition, Bloch--Beilinson filtration, EPW sextics, hyperk\"ahler varieties, K3 surfaces, abelian varieties, Calabi--Yau varieties}

\subjclass[2010]{Primary 14C15, 14C25, 14C30. Secondary 14J32, 14J35, 14J70, 14K99}

\maketitle

\section{Introduction}

For a smooth projective variety $X$ over $\C$, let $A^i(X)=CH^i(X)_{\QQ}$ denote the Chow group of codimension $i$ algebraic cycles modulo rational equivalence with $\QQ$--coefficients. Intersection product defines a ring structure on $A^\ast(X)=\oplus_i A^i(X)$. In the case of $K3$ surfaces, this ring structure has an interesting property:

\begin{theorem}[Beauville--Voisin \cite{BV}]\label{K3} Let $S$ be a $K3$ surface. Let $D_i, D_i^\prime\in A^1(S)$ be a finite number of divisors. Then
  \[ \sum_i D_i\cdot D_i^\prime=0\ \ \ \hbox{in}\ A^2(S)_{}\ \Leftrightarrow\ \sum_i D_i\cdot D_i^\prime=0\ \ \ \hbox{in}\ H^4(S,\QQ)\ .\]
  \end{theorem}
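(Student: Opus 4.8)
The plan is to deduce the equivalence from the following stronger structural fact of Beauville and Voisin: the image of the intersection product $A^1(S)\otimes A^1(S)\to A^2(S)$ is the $1$--dimensional subspace $\QQ\, o_S$, where $o_S\in A^2(S)$ is the class of a point lying on a (possibly singular) rational curve on $S$; note $\deg(o_S)=1$. The implication ``$\Rightarrow$'' is then formal: rational equivalence implies homological equivalence (equivalently, the cycle class map $A^2(S)\to H^4(S,\QQ)$ is a ring homomorphism). For ``$\Leftarrow$'', write $\sum_i D_i\cdot D_i^\prime=\lambda\, o_S$ in $A^2(S)$ with $\lambda\in\QQ$; since $H^4(S,\QQ)\cong\QQ$ via the degree map, the hypothesis forces $\lambda=\sum_i (D_i\cdot D_i^\prime)_S=0$, so $\sum_i D_i\cdot D_i^\prime=0$ already in $A^2(S)$. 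Everything thus reduces to the structural fact.

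To set up $o_S$ I would use that a projective $K3$ surface carries rational curves (Bogomolov--Mumford, refined by Mori--Mukai): if $R\subseteq S$ is an irreducible rational curve with normalisation $\nu\colon\PP^1\to R\hookrightarrow S$, then all $0$--cycles of degree $1$ on $\PP^1$ are rationally equivalent, so all points of $R$ give one and the same class $o_S\in A^2(S)$; its independence of $R$ follows because a suitably positive linear system on $S$ contains a connected union of rational curves, linking any two such classes. With $o_S$ fixed, the structural fact reduces, by bilinearity and by writing an arbitrary divisor as a difference of very ample ones, to: for every irreducible curve $C\subseteq S$ (which we may assume is not itself rational, that case being immediate) and every very ample $D$ for which $|D|$ contains a union of rational curves,
\[ C\cdot D=(C\cdot D)_S\, o_S \quad\text{in } A^2(S). \]

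To prove this I would move $D$, not $C$. Choose a general pencil $\{D_t\}_{t\in\PP^1}\subseteq|D|$ passing through a fixed member $D_0=\bigcup_k R_k$ that is a union of rational curves, and form the incidence surface $\DD\subseteq S\times\PP^1$, which is flat over $\PP^1$ (it is a blow--up of $S$ at the base points of the pencil). Pulling $\OO_S(C)$ back to $\DD$ and restricting to fibres gives, for every $t\in\PP^1$ including $t=0$, a $0$--cycle class $z_t\in A^2(S)$; for general $t$ one has $z_t=C\cdot D$ (the intersection cycle $C\cap D_t$), while $z_0$ is supported on $\bigcup_k R_k$ and, after pulling $\OO_S(C)$ back to the normalisation $\coprod_k\PP^1\to D_0$, equals $\sum_k (C\cdot R_k)\, o_S=(C\cdot D_0)\, o_S=(C\cdot D)\, o_S$. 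Finally $z_t$ is independent of $t$, because $z_t$ is the restriction to the fibre $S\times\{t\}$ of a fixed cycle class on $S\times\PP^1$ and any two fibres of $S\times\PP^1\to\PP^1$ are rationally equivalent ($A_0(\PP^1)\cong\ZZ$). Comparing $z_0$ with a general $z_t$ gives the displayed identity, hence the theorem. I expect the main obstacle to be precisely this geometric input --- exhibiting members that are unions of rational curves, in a family of linear systems that $\QQ$--spans $A^1(S)$ --- together with the technical care needed to see that restriction to $D_t$ commutes with the specialisation $t\to 0$ (flatness of $\DD\to\PP^1$) and that the limit cycle on the reducible fibre $D_0$ is the one computed above; the well--definedness of $o_S$ is a secondary, similar point.
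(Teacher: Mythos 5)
This statement is quoted by the paper from Beauville--Voisin \cite{BV} and is not proved in the paper at all, so there is no internal argument to compare against; your proposal has to be measured against the original source. On that score it is correct, and it is essentially the \cite{BV} argument: everything is reduced to the structural fact that $\hbox{Im}\bigl(A^1(S)\otimes A^1(S)\to A^2(S)\bigr)=\QQ\, o_S$ with $\deg(o_S)=1$, from which both implications follow exactly as you say (the forward one because the cycle class map is a ring homomorphism, the reverse one because $H^4(S,\QQ)\cong\QQ$ via the degree). The well--definedness of $o_S$ via connected ample unions of rational curves, and the Bogomolov--Mumford/Mori--Mukai input, are also as in \cite{BV}. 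Where you deviate is in the key computation: Beauville--Voisin write \emph{every} divisor class as a $\ZZ$--combination of (possibly singular) rational curves and then observe that each pairwise product $R\cdot R'$ is a $0$--cycle class supported on a rational curve, hence a multiple of $o_S$; you instead keep one factor $C$ irreducible and specialize the other inside its linear system. Your route works, but the pencil $\DD\subset S\times\PP^1$ and the attendant flatness/specialization worries are unnecessary: since $D\sim D_0$ as divisors, one has directly $[C]\cdot[D]=(j_C)_\ast\bigl(c_1(\OO_S(D))\cap[C]\bigr)=(j_C)_\ast\bigl(c_1(\OO_S(D_0))\cap[C]\bigr)=[C]\cdot[D_0]$ in $A^2(S)$, and the latter is represented by a $0$--cycle supported on the union of rational curves $D_0$, hence equals $(C\cdot D)\,o_S$. (Two small points of care: the Mori--Mukai member of $|D|$ may be a sum of rational curves \emph{with multiplicities}, which changes nothing; and for a singular rational curve $R$ one should argue on $A^2(S)$ directly --- every point of $R$ is the image of a point of the normalization $\PP^1$, hence equals $o_S$ --- rather than on $A_0(R)$, whose degree--zero part need not vanish.)
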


Conjecturally, a similar property holds for self--products of $K3$ surfaces:

\begin{conjecture}[Beauville--Voisin]\label{K3conj} Let $S$ be a $K3$ surface. For $r\ge 1$, let $D^\ast(S^r)\subset A^\ast(S^r)_{}$ be the $\QQ$--subalgebra generated by (the pullbacks of)
divisors and the diagonal of $S$. The restriction of the cycle class map induces an injection
  \[ D^i(S^r)\ \to\ H^{2i}(S^r,\QQ)\ \]
  for all $i$ and all $r$.
\end{conjecture}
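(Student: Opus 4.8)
Since Conjecture~\ref{K3conj} is open, I describe the natural line of attack and point out where it stalls. Write $R^\ast(S^r)\subseteq H^\ast(S^r,\QQ)$ for the image of $D^\ast(S^r)$ under the cycle class map. The conjecture says that the tautological cycles satisfy no more relations in $A^\ast(S^r)$ than in cohomology, i.e.\ that the surjection $D^\ast(S^r)\twoheadrightarrow R^\ast(S^r)$ is an isomorphism; equivalently, that every defining relation of the graded ring $R^\ast(S^r)$ lifts to $A^\ast(S^r)$. The plan is therefore: (a) produce an explicit presentation of the cohomology ring $R^\ast(S^r)$; (b) lift its relations, family by family.

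For (a) one uses the analysis of the tautological ring of powers of a $K3$ surface (essentially due to Q.~Yin, resting on the exterior algebra on $H^2(S,\QQ)$ and on a multiplicative Chow--K\"unneth decomposition of $S$): $R^\ast(S^r)$ is generated by the pulled--back divisor classes $p_k^\ast A^1(S)$ and the partial diagonals $\delta_{k\ell}\in A^2(S^r)$, $1\le k<\ell\le r$, and it is cut out by relations of three kinds. First: any product of two divisor classes coming from a single factor is a rational multiple of $\mathfrak{o}_S:=\tfrac1{24}c_2(S)$; this is precisely Theorem~\ref{K3}, hence valid already in $A^\ast$. Second, the ``degenerate diagonal'' relations: $\delta_{k\ell}\cdot p_k^\ast D=\delta_{k\ell}\cdot p_\ell^\ast D$ (which is formal), the self--intersection identity $\delta_{k\ell}\cdot\delta_{k\ell}=24\,\delta_{k\ell}\cdot p_k^\ast\mathfrak{o}_S$ (a consequence of $c_2(S)=24\,\mathfrak{o}_S$ in $A^2(S)$), and the Beauville--Voisin small--diagonal relation in $A^4(S^3)$; all of these hold in $A^\ast$ by \cite{BV}. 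Third, and crucially, a single ``antisymmetrisation'' relation supported on $S^{N+1}$, where $N:=\dim_\QQ H^2_{tr}(S,\QQ)=22-\rho(S)$: in motivic terms it is the vanishing of $\wedge^{N+1}\mathfrak{t}$, where $\mathfrak{t}:=\mathfrak{h}^{tr}_2(S)$ is the transcendental part of the Chow motive $\mathfrak{h}(S)=\mathbf 1\oplus\mathbb{L}^{\oplus\rho}\oplus\mathfrak{t}\oplus\mathbb{L}^2$, and it holds in cohomology merely because $H^2_{tr}(S,\QQ)$ has dimension $N$. With the first two families already lifted, step (b) reduces to lifting this last relation.

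The relation $\wedge^{N+1}\mathfrak{t}=0$ in the category of rational Chow motives is exactly Kimura--O'Sullivan finite--dimensionality of the transcendental motive of $S$: $\mathfrak{t}$ being an even motive whose realization has dimension $N$, finite--dimensionality forces $\wedge^{N+1}\mathfrak{t}=0$, i.e.\ makes a certain idempotent self--correspondence of $S^{N+1}$ rationally trivial, which is the Chow--theoretic incarnation of the last relation. This step is where the program stops in general, and it is the one I expect to be the genuine obstacle: finite--dimensionality of $\mathfrak{h}(S)$ is presently known only for special $K3$ surfaces --- Kummer surfaces, those dominated by a product of curves, those of CM type, those with $\rho(S)=20$, and sporadic further cases --- and is open for the general $K3$ surface. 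For any $S$ for which finite--dimensionality is available, all three families of relations lift, so $D^\ast(S^r)\to R^\ast(S^r)$ is an isomorphism for every $r$ and Conjecture~\ref{K3conj} holds for that $S$; this is, in particular, the kind of input one feeds into the EPW--sextic analogue studied in the present paper.
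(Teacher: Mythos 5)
This statement is Conjecture \ref{K3conj}, which the paper records as open and states without proof (citing \cite{V12}, \cite{V13}, \cite{Vo}, \cite{Y} only for partial results), so there is no proof of it in the paper to compare against; treating it as a conjecture and describing the known reduction is the right call. Your sketch is an accurate account of that reduction: presenting the tautological ring of $S^r$, lifting the divisor and diagonal relations via \cite{BV}, and isolating the vanishing of $\wedge^{N+1}$ of the transcendental motive (with $N=22-\rho(S)$) as the one relation whose Chow--theoretic lift is equivalent to supplying Kimura--O'Sullivan finite--dimensionality is exactly Yin's theorem \cite{Y}, and you correctly identify finite--dimensionality of a general $K3$ surface as the genuine open obstacle.
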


(cf. \cite{V12}, \cite{V13}, \cite{Vo}, \cite{Y} for extensions and partial results concerning conjecture \ref{K3conj}.)

Beauville has asked which varieties have behaviour similar to theorem \ref{K3} and conjecture \ref{K3conj}. This is the problem of determining which varieties verify the ``weak splitting property'' of \cite{Beau3}. We briefly state this problem here as follows:

\begin{problem}[Beauville \cite{Beau3}]\label{prob} Find a nice class $\CC$ of varieties (containing $K3$ surfaces and abelian varieties), such that for any $X\in\CC$, the Chow ring of $X$ admits a multiplicative bigrading $A^\ast_{(\ast)}(X)$, with
  \[ A^i(X)=\bigoplus_{j\ge 0} A^i_{(j)}(X)\ \ \ \hbox{for\ all\ }i\ .\] 
This bigrading should split the conjectural Bloch--Beilinson filtration, in particular 
  \[ A^i_{hom}(X)= \bigoplus_{j\ge 1} A^i_{(j)}(X)\ .\]
  \end{problem}

It has been conjectured that hyperk\"ahler varieties are in $\CC$ \cite[Introduction]{Beau3}. Also, not all Calabi--Yau varieties can be in $\CC$ \cite[Example 1.7(b)]{Beau3}. An interesting novel approach of problem \ref{prob} (as well as a reinterpretation of theorem \ref{K3}) is provided by the concept of {\em multiplicative Chow--K\"unneth decomposition\/} (cf. \cite{SV}, \cite{V6}, \cite{SV2} and subsection \ref{ssmck} below).

In this note, we ask whether EPW sextics might be in $\CC$. An EPW sextic is a special sextic $X\subset\PP^5(\C)$ constructed in \cite{EPW}. EPW sextics are not smooth; however, a generic EPW sextic is a quotient $X=X_0/(\sigma_0)$, where $X_0$ is a smooth hyperk\"ahler variety (called a double EPW sextic) and $\sigma_0$ is an anti--symplectic involution \cite[Theorem 1.1]{OG}, \cite{OG3}.  Quotient varieties behave like smooth varieties with respect to intersection theory with rational coefficients, so the following conjecture makes sense:

\begin{conjecture}\label{optim} Let $X$ be an EPW sextic, and assume $X$ is a quotient variety $X=X_0/G$ with $X_0$ smooth and $G\subset\hbox{Aut}(X_0)$ a finite group. Then
$X\in\CC$.
\end{conjecture}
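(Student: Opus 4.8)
The full statement of Conjecture~\ref{optim} seems beyond reach; the realistic goal is to establish it for the ``very special'' double EPW sextic of Donten--Bury et alii, whose geometry is rigid enough to be controlled. The first step is a reduction. Since $X=X_0/G$ is a quotient variety and we work with $\QQ$--coefficients, one has $A^\ast(X)=A^\ast(X_0)^G$ as graded rings (averaging / transfer), and $h(X)$ is the $G$--invariant summand of $h(X_0)$. It therefore suffices to (i) equip $X_0$ with a multiplicative Chow--K\"unneth decomposition (MCK) in the sense of Shen--Vial, (ii) show that $G$ (in fact all of $\mathrm{Aut}(X_0)$, which for the very special sextic is larger than $\langle\sigma_0\rangle$) acts by morphisms of this MCK, and (iii) verify the Bloch--Beilinson compatibility $A^i_{hom}(X)=\bigoplus_{j\ge 1}A^i_{(j)}(X)$; one then sets $A^i_{(j)}(X):=\bigl(A^i_{(j)}(X_0)\bigr)^G$.

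For step (i) I would use the explicit description of the very special double EPW sextic: it is birational, as a hyperk\"ahler fourfold, to a Hilbert square $S^{[2]}$ of a suitable K3 surface $S$ (or to a moduli space of sheaves on such an $S$). Then I invoke two known inputs. First, $S^{[2]}$ carries an MCK, by Shen--Vial, provided $S$ does, and a K3 surface has an MCK by Beauville--Voisin (the content of Theorem~\ref{K3} reinterpreted). Second, birational hyperk\"ahler fourfolds have isomorphic Chow rings and isomorphic Chow motives as algebra objects (Rie\ss{}, Shen--Vial), and this isomorphism is compatible with the MCK; hence $X_0$ acquires an MCK. Finite--dimensionality of $h(X_0)$ comes along for free: it is a birational invariant of HK fourfolds and $h(S^{[2]})$ is a summand of $h(S\times S)$, so one only needs $h(S)$ finite--dimensional. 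This is precisely where ``very special'' enters, as the relevant $S$ has finite--dimensional motive of abelian type (a Kummer surface, or a K3 with complex multiplication / large automorphism group).

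For step (ii), the Chow--K\"unneth projectors on $h(S^{[2]})$ are canonically determined by their cohomological action together with the Beauville--Voisin canonical $0$--cycle of $S$; an automorphism preserving the holomorphic symplectic form up to a scalar preserves these data and hence commutes with the projectors. Transporting this across the birational map, $\sigma_0$ and all of $G$ commute with the CK projectors on $X_0$, so taking $G$--invariants produces a genuine multiplicative bigrading on $A^\ast(X)$. For step (iii), the inclusions $A^i_{(j)}(X)\subseteq A^i_{(j)}(X_0)$ reduce matters to the sharp Bloch--Beilinson vanishing for $X_0$: that $A^i_{(j)}(X_0)=0$ outside $0\le j\le i$ and that $A^i_{(0)}(X_0)$ injects into $H^{2i}$. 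These are read off from the corresponding statements for $S^{[2]}$, using Kimura--O'Sullivan nilpotence to propagate vanishings from cohomology to Chow groups.

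The main obstacle is concentrated in steps (i) and (iii). Identifying the birational model precisely, and verifying that the K3 surface it produces genuinely has finite--dimensional motive of abelian type, is delicate and is exactly what forces the restriction to the very special sextic: for a general double EPW sextic none of this is available. Even granting an MCK and finite--dimensionality, the sharp Bloch--Beilinson vanishing of step (iii) is not formal, since it requires information about $A^\ast(S^{[2]})$ beyond what finite--dimensionality alone yields. For this reason I would expect only \emph{part} of Conjecture~\ref{optim} to be provable here --- most plausibly the analogue of Conjecture~\ref{K3conj}, namely injectivity on cohomology of the subalgebra of $A^\ast(X^r)$ generated by divisors and the diagonal --- rather than the full bigrading together with its Bloch--Beilinson splitting.
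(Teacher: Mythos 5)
This statement is a \emph{conjecture}, and the paper does not prove it either; it only establishes fragments for one example (proposition \ref{mck}, proposition \ref{2}, theorem \ref{main2}). Your instinct that only partial results for the very special EPW sextic are within reach, and that the sharp Bloch--Beilinson vanishing $A^i_{(0)}\cap A^i_{hom}=0$ is the irreducible obstruction, matches the paper exactly (cf.\ remarks \ref{BB} and \ref{BBk}). However, your proposed route has a genuine gap at the very first step. You reduce to $A^\ast(X)=A^\ast(X_0)^G$ for a \emph{smooth hyperk\"ahler} $X_0$ with $X=X_0/G$, and then plan to descend an MCK from $X_0\sim_{\rm bir}S^{[2]}$. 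But for the very special EPW sextic no such presentation is known: the paper explicitly remarks that it is not known whether a double EPW sextic $X_0$ with $X=X_0/(\sigma_0)$ exists here. The hyperk\"ahler fourfold $X_0$ of theorem \ref{epw} is a symplectic \emph{resolution} of the singular double cover $X^\prime=E^4/(G^\prime)$; it dominates $X$ by a generically $2{:}1$ morphism but $X$ is not a quotient of it by a group action. So the averaging/transfer identification you start from is unavailable, and with it your steps (i)--(iii) as formulated.

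The paper's way around this is to use the \emph{other} smooth model: $X=E^4/G$ with $G$ a finite group of group automorphisms of the abelian fourfold. The MCK on $X$ is built by pushing forward O'Sullivan's symmetrically distinguished CK projectors from $E^4$ (proposition \ref{mck}); your step (ii) --- equivariance of the projectors --- is then not an appeal to preservation of a canonical $0$--cycle (which would be hard to make rigorous across the birational map and the double cover), but the one-line observation that $\Pi_i^A\circ\Gamma_g-\Gamma_g\circ\Pi_i^A$ is a homologically trivial symmetrically distinguished cycle, hence zero (lemma \ref{comm}). The $S^{[2]}$ model you lean on is used in the paper only for auxiliary inputs: finite-dimensionality of $X_0$, the strong generalized Hodge conjecture (proposition \ref{ghc}), and the surjectivity statement (proposition \ref{surj}). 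Finally, the partial injectivity that is actually proved (theorem \ref{main2}) comes from Beauville's decomposition for abelian varieties, which gives $A^i_{(0)}(A^r)\cap A^i_{hom}(A^r)=0$ only for $i\ge 4r-1$; this is why the result is confined to $0$-- and $1$--cycles rather than the full analogue of conjecture \ref{K3conj} that you anticipate.
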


There are two reasons why conjecture \ref{optim} is likely to be true: first, because an EPW sextic is a Calabi--Yau hypersurface (and these are probably in $\CC$); secondly, because the 
hyperk\"ahler variety $X_0$ should be in $\CC$, and the involution $\sigma_0$ should behave nicely with respect to the bigrading on $A^\ast_{(\ast)}(X_0)$.
Let us optimistically suppose conjecture \ref{optim} is true, and see what consequences this entails for the Chow ring of EPW sextics. We recall that Chow groups are expected to satisfy a weak Lefschetz property, according to a long--standing conjecture:

\begin{conjecture}[Hartshorne \cite{Ha}]\label{weak} Let $X\subset\PP^{n+1}(\C)$ be a smooth hypersurface of dimension $n\ge 4$. Then the cycle class map
  \[ A^2_{}(X)_{}\ \to\ H^4(X,\QQ) \]
  is injective.
  \end{conjecture}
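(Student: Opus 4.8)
The statement is Hartshorne's weak Lefschetz conjecture for Chow groups, which is open in general; I will describe the approach I would pursue and isolate the step that constitutes the genuine obstacle. First I would reformulate the assertion: the kernel of the cycle class map $A^2(X)\to H^4(X,\QQ)$ is by definition $A^2_{hom}(X)$, so injectivity is equivalent to the vanishing $A^2_{hom}(X)=0$. By the Lefschetz hyperplane theorem the restriction $H^4(\PP^{n+1},\QQ)\to H^4(X,\QQ)$ is an isomorphism whenever $4<n$, so for $n\ge 5$ one has $H^4(X,\QQ)=\QQ\cdot h^2$ and the conjecture reduces to the statement that $A^2(X)$ is generated by the square of the hyperplane class; for $n=4$ the group $H^4(X,\QQ)$ is the (primitive) middle cohomology and the content is that no codimension-two cycle is homologically trivial without being rationally trivial. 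I would organise the argument according to the degree $d=\deg X$.

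In the low-degree range $d\le n+1$ the hypersurface $X$ is a Fano variety, hence rationally connected, so that $CH_0(X)_\QQ=\QQ$. Here I would invoke the decomposition of the diagonal of Bloch and Srinivas: triviality of $CH_0$ produces a rational equivalence
\[ m\,\Delta_X \;=\; X\times \mathrm{pt}\;+\;Z \quad\text{in } CH^n(X\times X)_\QQ, \]
with $Z$ supported on $D\times X$ for some divisor $D\subset X$. Letting both sides act as correspondences on $A^2_{hom}(X)$, the first term acts by zero and the second factors through a variety of dimension $n-1$, which shows that $A^2_{hom}(X)_\QQ$ injects, via the Abel--Jacobi map, into the intermediate Jacobian $J^2(X)$. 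Since $3<n$ forces $H^3(X,\QQ)=H^3(\PP^{n+1},\QQ)=0$, the Jacobian $J^2(X)$ vanishes, and therefore $A^2_{hom}(X)_\QQ=0$. This disposes of the entire Fano range, and for \emph{all} members, not merely the generic one.

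In the high-degree range I would specialise to the very general member of $\vert\OO_{\PP^{n+1}}(d)\vert$ and apply Nori's connectivity theorem to the universal family $\mathcal X\to B$ of smooth degree-$d$ hypersurfaces. Nori's theorem yields the vanishing of the relative rational cohomology $H^k(\mathcal X,X;\QQ)$ in a range that widens as $d$ grows; by the standard spreading-out and monodromy argument this forces a codimension-two cycle that is homologically trivial on the very general $X$ to be already rationally trivial, provided $d$ is large enough for the connectivity bound relevant to codimension two to apply. Together with the Fano case this would establish the conjecture for an initial and a terminal interval of degrees, and for the very general member throughout.

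The essential difficulty, and the reason the conjecture remains unproven, is twofold. First, Nori's method is intrinsically a statement about the \emph{very general} member: it gives no control over special smooth hypersurfaces, whose Noether--Lefschetz phenomena may a priori carry extra homologically trivial codimension-two cycles. Second, there is no specialisation mechanism to propagate the vanishing of $A^2_{hom}$ from the very general fibre to every fibre, since Chow groups are not semicontinuous in a family and a homologically trivial cycle on a special member need not deform. The Bloch--Srinivas argument, on the other hand, collapses the moment $X$ ceases to be Fano, because $CH_0(X)_\QQ$ is then no longer trivial. Closing the gap between these two regimes---in particular handling an arbitrary smooth sextic fourfold, the Calabi--Yau case $n=4$, $d=6$ directly relevant to EPW sextics---is exactly where a complete proof must bring in a genuinely new idea, and it is why the present paper establishes the analogous statement only for the very special EPW sextic rather than the conjecture in its stated generality.
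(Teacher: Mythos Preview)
The statement you are addressing is labelled \emph{Conjecture} in the paper, not Theorem, and the paper explicitly records that it is ``notoriously open for all hypersurfaces of degree $d\ge n+2$''. There is therefore no proof in the paper to compare your proposal against; the paper only uses the conjecture as motivation for Conjecture~\ref{subring}, and for the very special EPW sextic it establishes merely the weaker statement $A^2(X)=A^2_{(0)}(X)$ (Proposition~\ref{2}), while noting in Remark~\ref{BB} that the missing step $A^2_{(0)}(X)\cap A^2_{hom}(X)=0$ would follow from Beauville's conjecture for $E^4$.

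Your write-up is not a proof attempt but a survey of the known partial landscape, and as such it is accurate in spirit. The Fano case $d\le n+1$ via Bloch--Srinivas is correct and is precisely the known range the paper alludes to. Your appeal to Nori connectivity for very general hypersurfaces of large degree is morally right but slightly loose: what Nori's theorem (and its refinements by Paranjape, Esnault--Levine--Viehweg, Otwinowska) actually yields is that for $d$ large relative to $n$ the group $A^2(X_{\rm very\ gen})$ is generated by $h^2$, which is stronger than $A^2_{hom}=0$; however the bound on $d$ for which this is known leaves a genuine gap above the Fano range, and in particular does not cover the Calabi--Yau threshold $d=n+2$. Your identification of the two obstructions---Nori-type results being generic-only, and the absence of any specialisation principle for $A^2_{hom}$---is exactly the reason the conjecture is open, and matches what the paper says. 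In short: you have correctly recognised that no proof exists, and your discussion of why is sound; just be aware that the paper never claims otherwise.
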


Conjecture \ref{weak} is notoriously open for all hypersurfaces of degree $d\ge n+2$.
Since quotient varieties behave in many ways like smooth varieties, it seems reasonable to expect that conjecture \ref{weak} extends to hypersurfaces that are quotient varieties. This would imply that an EPW sextic $X$ as in conjecture \ref{optim} has $A^2_{hom}(X)=0$. That is, conjecturally we have that
  \[ A^i(X)= A^i_{(0)}(X)\ \ \ \hbox{for\ all\ }i\le 2\ .\]
  For any $r\ge 1$, let us now define 
     \[ E^\ast(X^r)\ \subset \ A^\ast(X^r) \]
     as the $\QQ$--subalgebra generated by (pullbacks of) elements of $A^1(X)$ and $A^2(X)$ and the class of the diagonal of $X$. The above remarks imply a conjectural inclusion
     \[ E^\ast(X^r)\ \subset\ A^\ast_{(0)}(X^r)\ =\ A^\ast(X^r)/ A^\ast_{hom}(X^r) \ .\]  
     We thus arrive at the following concrete, falsifiable conjecture:
  
  \begin{conjecture}\label{subring} Let $X$ be an EPW sextic as in conjecture \ref{optim}.
   Then restriction of the cycle class map
     \[  E^i(X^r)\ \to\ H^{2i}(X^r,\QQ) \]
     is injective for all $i$ and all $r$.
     \end{conjecture}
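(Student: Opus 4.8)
The plan is to deduce Conjecture~\ref{subring} for the very special EPW sextic $X$ from structural properties of its canonical double cover $X_0$ (a double EPW sextic, hence a hyperk\"ahler fourfold of $K3^{[2]}$ type), using the result of Donten--Bury et alii that $X_0$ is birational to a Hilbert scheme $S^{[2]}$ for a $K3$ surface $S$ of CM type. I would extract two structural ingredients. \emph{(i) Finite--dimensionality.} The motive of a singular $K3$ surface is of abelian type (via the Shioda--Inose construction), hence finite--dimensional in the sense of Kimura--O'Sullivan; by the de Cataldo--Migliorini formula so is $\mathfrak h(S^{[2]})$; by Rie\ss's theorem (a birational map of hyperk\"ahler varieties induces an isomorphism of Chow motives, compatible with the Chow--K\"unneth gradings) so is $\mathfrak h(X_0)$; and since $\mathfrak h(X)=\bigl(\mathfrak h(X_0),\tfrac12(\Delta_{X_0}+\Gamma_{\sigma_0})\bigr)$ is a direct summand of $\mathfrak h(X_0)$, the motive $\mathfrak h(X)$ and all its tensor powers are finite--dimensional. \emph{(ii) A multiplicative Chow--K\"unneth decomposition (MCK).} By Shen--Vial \cite{SV}, $S^{[2]}$ has one; it is transported to $X_0$ along the birational map, and since its projectors are manufactured from the Beauville--Bogomolov form and the canonical $0$--cycle, which the anti--symplectic involution $\sigma_0$ respects, it descends to $X$ with $A^i_{(j)}(X)=A^i_{(j)}(X_0)^{\sigma_0}$, and hence (by the standard formalism) to every power $X^r$.

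The third ingredient is the weak Lefschetz property $A^2_{\mathrm{hom}}(X)=0$ (the analogue for $X$ of Conjecture~\ref{weak}). Since $\sigma_0$ is anti--symplectic it acts by $-1$ on $H^{2,0}(X_0)$, hence by $-1$ on the transcendental motive $\mathfrak h^2_{\mathrm{tr}}(X_0)$; in the Fourier decomposition of \cite{SV} one has $A^2_{\mathrm{hom}}(X_0)=A^2_{(2)}(X_0)$, a group on which $\sigma_0$ acts through the element of $\operatorname{End}(\mathfrak h^2_{\mathrm{tr}}(X_0))$ inducing $-1$, so that $A^2_{\mathrm{hom}}(X)=A^2_{(2)}(X_0)^{\sigma_0}=0$. (Alternatively: one has $H^1(X)=H^3(X)=0$ and $\mathfrak h^2(X)\cong\mathbb 1(-1)^{\oplus b_2(X)}$ because $H^2(X)$ is algebraic, and a Bloch--conjecture argument for the finite--dimensional fourfold applies.) In particular $A^2(X)=A^2_{(0)}(X)$.

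Granting these, the conjecture follows formally. All generators of $E^\ast(X^r)$ lie in $A^\ast_{(0)}(X^r)$: pulled--back divisors because $A^1_{\mathrm{hom}}(X)=0$; pulled--back codimension--$2$ classes by the previous paragraph; and the classes of the pulled--back diagonals of $X$ because for an MCK the Chow--K\"unneth projectors, hence $\Delta_X=\sum_i\pi_i$, lie in $A^4_{(0)}(X\times X)$ \cite{SV}. Multiplicativity makes $A^\ast_{(0)}(X^r)$ a subring of $A^\ast(X^r)$, so $E^\ast(X^r)\subseteq A^\ast_{(0)}(X^r)$; as $A^i_{(j)}(X^r)\subseteq A^i_{\mathrm{hom}}(X^r)$ automatically for $j\geq1$, it only remains to prove that $A^\ast_{(0)}(X^r)\to H^\ast(X^r,\QQ)$ is injective. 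Using $\mathfrak h(X^r)=\mathfrak h(X)^{\otimes r}$, the vanishing $H^{\mathrm{odd}}(X)=0$, and the fact that $\mathfrak h^{2i}(X)\cong\mathbb 1(-i)^{\oplus b_{2i}}$ for $i\neq2$ (by finite--dimensionality, since $H^{2i}(X)$ is then algebraic --- here one again uses that $\sigma_0$ is anti--symplectic), this reduces to injectivity of the cycle class map on finitely many groups $\operatorname{Hom}\bigl(\mathbb 1(-m),\mathfrak h^4_{\mathrm{tr}}(X)^{\otimes k}\bigr)$, $1\leq k\leq r$. Here $\mathfrak h^4_{\mathrm{tr}}(X)$ is an explicit finite--dimensional motive with complex multiplication, assembled from the rank--two transcendental motive of $S$; its tensor powers, and their symmetric and exterior powers, split --- after removing Tate summands --- into rank--two CM motives, so these Hom--groups can be analysed term by term.

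The hard part is precisely this last step for large $r$: establishing $A^\ast_{(0)}(X^r)\hookrightarrow H^\ast(X^r)$ for all $r$ is the exact analogue for $X$ of the (open) Conjecture~\ref{K3conj}, and although every piece of $\mathfrak h^4_{\mathrm{tr}}(X)^{\otimes k}$ has a transparent Hodge structure, one must check that none of its homologically trivial summands supports a non--zero cycle group --- i.e. that rational and homological equivalence coincide in the relevant degrees for these particular CM motives. For the summands with vanishing $(p,p)$--part this is already implied by $A^2_{\mathrm{hom}}(X)=0$, but for the higher tensor powers it requires the full strength of the CM structure; in the absence of a sufficiently uniform such statement the method yields Conjecture~\ref{subring} only for small $r$ (in particular for $r=1$, which uses nothing beyond the MCK and $A^2_{\mathrm{hom}}(X)=0$), or for all $r$ but restricted values of $i$. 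A secondary technical point worth checking with care is that the MCK of $S^{[2]}$ genuinely survives both the birational modification to $X_0$ and the descent along the anti--symplectic quotient.
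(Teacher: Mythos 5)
First, a point of logic: the statement you were asked about is a \emph{conjecture} in the paper, and the paper does not prove it; it only establishes the restricted range $i\ge 4r-1$ for the very special EPW sextic (theorem \ref{main2}). Your proposal, to its credit, is candid that the full statement is out of reach, but the partial results you do claim rest on a step that is itself an open problem. Namely, you assert that $A^2_{\mathrm{hom}}(X_0)=A^2_{(2)}(X_0)$ and deduce $A^2_{\mathrm{hom}}(X)=0$. What is actually known from the Shen--Vial formalism is only the inclusion $A^2_{(2)}(X_0)\subset A^2_{\mathrm{hom}}(X_0)$; the reverse inclusion is equivalent to $A^2_{(0)}(X_0)\cap A^2_{\mathrm{hom}}(X_0)=0$, which the paper explicitly singles out as unknown (remark \ref{BB} and the remark following corollary \ref{XX0}, where it is noted that the analogous vanishing is not known for any single Fano variety of lines on a cubic fourfold). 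The same objection applies to your ``alternative'' argument via $\mathfrak h^2(X)$: the obstruction to $A^2_{\mathrm{hom}}(X)=0$ lives in $h_{4,0}(X)$, not in $h^2(X)$, and killing it is exactly conjecture \ref{weak} for $X$. So even your claimed $r=1$ case does not follow.

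Two further structural gaps. (a) You treat $X$ as $X_0/(\sigma_0)$ for a smooth hyperk\"ahler $X_0$ with an anti--symplectic involution, and descend the MCK and the bigrading via $A^i_{(j)}(X)=A^i_{(j)}(X_0)^{\sigma_0}$. For the very special EPW sextic this presentation is not known to exist (the paper says so explicitly); what one actually has is $X=E^4/G$ and a generically $2{:}1$ morphism $f\colon X_0\to X$ factoring through the singular quotient $X'=E^4/(G')$, with $X_0\to X'$ a symplectic resolution. The paper therefore builds the MCK for $X$ directly from O'Sullivan's symmetrically distinguished cycles on the abelian fourfold $E^4$ (proposition \ref{mck}), and proves $A^2(X)=A^2_{(0)}(X)$ the same way (proposition \ref{2}); your descent-from-$S^{[2]}$ route would need a genuine argument to replace the missing involution. (b) Your final step --- injectivity of $A^\ast_{(0)}(X^r)\to H^\ast(X^r)$ via an analysis of CM motives --- is left open and yields no concrete range. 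The paper's device here is Beauville's theorem on the Chow ring of an abelian variety: $A^i_{(0)}(A^{})\cap A^i_{\mathrm{hom}}(A^{})=0$ for $i\ge \dim A-1$, applied to $E^{4r}$ and pulled back split-injectively along $E^{4r}\to X^r$; this is precisely what produces the bound $i\ge 4r-1$, and nothing in your CM analysis substitutes for it.
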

     
  Conjecture \ref{subring} is the analogon of conjecture \ref{K3conj} for EPW sextics; the role of divisors on the $K3$ surface is played by (the hyperplane section and) codimension $2$ cycles on the sextic.
  The main result in this note provides some evidence for conjecture \ref{subring}: we can prove it is true for $0$--cycles and $1$--cycles on one very special EPW sextic:
  
  \begin{nonumbering}[=theorem \ref{main2}] Let $X$ be the very special EPW sextic of \cite{DBG}. Let $r\in\NN$.    
    The restriction of the cycle class map
    \[ E^i(X^r)\ \to\ H^{2i}(X^r,\QQ) \]
    is injective for $i\ge 4r-1$.
   \end{nonumbering}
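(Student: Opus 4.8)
The plan is to deduce the injectivity of the cycle class map on $E^i(X^r)$ in the stated range from a finite-dimensionality result for the motive of the double EPW sextic $X_0$, combined with a concrete description of the Chow groups of $X$ coming from the work of Donten-Bury et alii. First I would recall that the very special EPW sextic $X$ of \cite{DBG} has a double cover $X_0$ which is a hyperk\"ahler fourfold with a very large group of automorphisms, and that (by the results of loc.\ cit.\ or by standard arguments about this particular variety) the motive of $X_0$ is finite-dimensional in the sense of Kimura--O'Sullivan: indeed $X_0$ is dominated by a product of curves, or more precisely it is birational to (a quotient of) a product involving a very symmetric abelian variety / K3 surface, whose motive is abelian and hence finite-dimensional. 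Since $X=X_0/\langle\sigma_0\rangle$ is a quotient, $A^\ast(X)_{\QQ}$ is the $\sigma_0$-invariant part of $A^\ast(X_0)_{\QQ}$, so the motive $h(X)$ is a direct summand of $h(X_0)$ and is again finite-dimensional. The same holds for every power: $h(X^r)$ is a summand of $h(X_0^r)$, which is finite-dimensional.

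Next I would establish vanishing of the relevant ``transcendental'' Chow groups. The key input is that for this particular $X$ one has $A^2_{hom}(X)=0$ (the weak Lefschetz statement, conjecture \ref{weak}, holds here because $X_0$ has vanishing $A^2_{hom}$ by finite-dimensionality plus the fact that $H^{2,2}_{tr}$ is small, and this passes to the invariant part), and moreover $A^3_{hom}(X)$ and $A^4_{hom}(X)$ are controlled — in fact $A^4_{hom}(X)_{AJ}=0$ and the Albanese-type kernel vanishes, so $A^i_{hom}(X)$ is ``as small as a K3-type variety allows.'' Concretely this means the Chow motive of $X$ admits a refined Chow--K\"unneth decomposition $h(X)=\bigoplus_j h^j(X)$ where the only piece carrying non-trivial higher Chow groups is the middle one $h^4(X)$, whose transcendental part $h^4_{tr}(X)$ is a submotive of (a twist of) the motive of a K3 surface or abelian variety. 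I would then use the multiplicativity: the subalgebra $E^\ast(X^r)$ is generated by classes living in $h^{\le 4}$ of the various factors, and products of such classes land in Chow--K\"unneth pieces $h^j$ with $j$ small relative to the ambient dimension $8r$; the obstruction to injectivity of the cycle class map on a product of motives is a correspondence-valued pairing which, by finite-dimensionality (Kimura's nilpotence theorem), vanishes as soon as it vanishes in cohomology, provided we are in a weight range where the relevant $A^\bullet_{hom}$ vanishes.

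The degree restriction $i\ge 4r-1$ enters as follows. An element of $E^i(X^r)$ of codimension $i$ is a sum of external products $z_1\times\cdots\times z_r$ with $z_k\in E^{i_k}(X)$, $\sum i_k=i$. By Poincar\'e duality inside each $A^\ast(X)$ one can trade a codimension-$i_k$ generator for a dimension-$(4-i_k)$ cycle; when $i\ge 4r-1$ the total ``dimension defect'' $\sum(4-i_k)=4r-i\le 1$, so at most one factor is not a zero-cycle and that factor is a $1$-cycle, i.e.\ every generator is (up to the diagonal class, which I handle by pulling back along $X^r\to X^{r+1}$ and reducing $r$) an external product of zero-cycles and at most one $1$-cycle on $X$. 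For zero-cycles the Beauville--Voisin-type statement for $X$ — which I would prove using finite-dimensionality: $A^4_{hom}(X)$ injects into cohomology once we know $h^4_{tr}(X)$ has no odd part and the nilpotence theorem kills the kernel — gives injectivity, and the single $1$-cycle factor is dealt with because $A^3_{hom}(X)\to H^6(X,\QQ)$ is injective here (again by finite-dimensionality, $h^3(X)=0$ or is of K3 type). Putting the external product back together, the cycle class map on $E^i(X^r)$ is injective because the K\"unneth components of the kernel all vanish by the one-factor analysis.

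The main obstacle I anticipate is \emph{not} the formal motivic bookkeeping but verifying the two concrete inputs for this specific $X$: (i) that $h(X_0)$ — equivalently $h(X)$ — is finite-dimensional, which requires identifying $X_0$ explicitly enough (via \cite{DBG}) to realize it inside the category of motives generated by abelian varieties; and (ii) the precise vanishing $A^2_{hom}(X)=0$ and $A^3_{hom}(X)\hookrightarrow H^6$, i.e.\ that $X$ has Chow groups of the smallest possible size. Step (i) is where I expect to spend the most effort, since the passage from the combinatorial/toric description of the very special EPW sextic to a product-of-abelian-varieties domination is the non-formal heart of the argument; everything downstream is an application of Kimura's nilpotence theorem and the structure of multiplicative Chow--K\"unneth decompositions.
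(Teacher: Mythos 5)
Your proposal rests on two concrete inputs that are not available: the vanishing $A^2_{hom}(X)=0$ and the injectivity of $A^3_{hom}(X)\to H^6(X,\QQ)$ (which would force $A^3_{hom}(X)=0$). The first is precisely conjecture \ref{weak} for $X$, which the paper explicitly cannot prove (see remark \ref{BB}); the justification you offer --- finite--dimensionality of $h(X_0)$ plus smallness of $H^4_{tr}$ --- does not yield it, because finite--dimensionality controls codimension--$2$ cycles only via the coniveau of $H^3$ and $H^4$, and here $A^2_{hom}(X_0)$ is in fact infinite--dimensional: $X_0$ is birational to $S^{[2]}$ for a $K3$ surface $S$, and $A^2_{hom}(S^{[2]})$ contains a copy of the (Mumford--infinite--dimensional) group $A^2_{hom}(S)$; nothing forces this to die in the $G$--invariant part. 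Likewise $A^3_{hom}(X)$ is not known to vanish --- the paper's theorem \ref{main} records $A^3(X)=\QQ\oplus A^3_{hom}(X)$ with no claim that the second summand is zero. So the strategy ``show the Chow groups of $X$ are as small as possible, then take external products'' cannot be carried out; this is the central gap.

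The mechanism that actually works is different and does not require any such vanishing of $A^i_{hom}(X)$. One builds a multiplicative Chow--K\"unneth decomposition on $X=E^4/G$ by descending O'Sullivan's symmetrically distinguished projectors from the abelian fourfold $E^4$ (propositions \ref{mck} and \ref{2}); the resulting bigrading satisfies $A^1(X)=A^1_{(0)}(X)$, $A^2(X)=A^2_{(0)}(X)$, and $\Delta_X$, $\Delta^X_{sm}$ lie in the degree--$0$ pieces, so that the whole subalgebra $E^\ast(X^r)$ lands in $A^\ast_{(0)}(X^r)$ --- note this says nothing about $A^2_{hom}(X)$ being zero, only that the distinguished classes sit in the right graded piece. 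One then pulls back along the finite map $E^{4r}\to X^r$ (a split injection on $A^\ast_{(0)}\cap A^\ast_{hom}$) and invokes Beauville's theorem that on an abelian variety $A$ of dimension $g$ one has $A^i_{(0)}(A)\cap A^i_{hom}(A)=0$ for $i\ge g-1$; with $g=4r$ this is exactly the range $i\ge 4r-1$. Your heuristic that $i\ge 4r-1$ means ``all factors are $0$--cycles except at most one $1$--cycle'' correctly locates where the bound comes from, but the honest source is the $0$-- and $1$--cycle cases of Beauville's conjecture on $E^{4r}$, and the reason the theorem stops at $4r-1$ is that the remaining cases of that conjecture are open --- not that the diagonal bookkeeping fails.
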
   

The very special EPW sextic of \cite{DBG} (cf. section \ref{secepw} below for a definition) is not smooth, but it is a ``Calabi--Yau variety with quotient singularities''. The very special EPW sextic $X$ is very symmetric; it is also
remarkable for providing the only example known so far of a complete family of $20$ pairwise incident planes in $\PP^5(\C)$ \cite{DBG}. 
As resumed in theorem \ref{epw} below, the very special EPW sextic $X$ is related to hyperk\"ahler varieties in two different ways: (a)
$X$ is rationally dominated via a degree $2$ map by the Hilbert scheme $S^{[2]}$ where $S$ is a $K3$ surface of Picard number $20$;
(b) $X$ admits a double cover that is the quotient of an abelian variety by a finite group of group automorphisms, and this quotient admits a hyperk\"ahler resolution $X_0$.

To prove theorem \ref{main2}, we first prove (proposition \ref{mck}) that the very special EPW sextic $X$ has a multiplicative Chow--K\"unneth decomposition, in the sense of Shen--Vial \cite{SV}, and so the Chow ring of $X$ has a bigrading. Next, 
we establish (proposition \ref{2}) that 
  \begin{equation}\label{0} A^2(X)=A^2_{(0)}(X)\ .\end{equation}
  Both these facts are proven using description (b), via the theory of {\em symmetrically distinguished cycles\/} \cite{OS}.
     
  Note that equality (\ref{0}) might be considered as evidence for conjecture \ref{weak} for $X$. In order to prove conjecture \ref{weak} 
  for the very special EPW sextic $X$, it remains to prove that
   \[ A^2_{(0)}(X)\cap A^2_{hom}(X)\stackrel{??}{=}0\ .\]
   Likewise, in order to prove the full conjecture \ref{subring} for the very special EPW sextic $X$, it remains to prove that
   \[ A^i_{(0)}(X^r)\cap A^i_{hom}(X^r)\stackrel{??}{=}0\ \ \ \hbox{for\ all\ }i, r\ .\]   
   We are not able to prove these equalities outside of the range $i\ge 4r-1$; this is related to some of the open cases of Beauville's conjecture on Chow rings of abelian varieties (remarks \ref{BB} and \ref{BBk}). 

On the positive side, we establish a precise relation between the Chow ring of the very special EPW sextic $X$ and the Chow ring of the 
hyperk\"ahler fourfold $X_0$ mentioned in description (b) (theorem \ref{main3}). This relation provides an alternative description of the splitting of the Chow ring of $X_0$ coming from a multiplicative Chow--K\"unneth decomposition (corollary \ref{XX0}). In proving this relation, we exploit description (a); a key ingredient in the proof is a strong version of the generalized Hodge conjecture for $X$ and $X_0$ (proposition \ref{ghc}), which crucially relies on the fact that the $K3$ surface $S$ has maximal Picard number.
  
We also obtain some results concerning Bloch's conjecture (subsection \ref{ssb}), as well as a conjecture of Voisin (subsection \ref{ssv}), for the very special EPW sextic. The application to Bloch's conjecture relies on description (b) (via the theory of symmetrically distinguished cycles), but also on description (a) (via the surjectivity result proposition \ref{surj}).

We end this introduction with a challenge: can one prove theorem \ref{main2} for other (not very special) EPW sextics ?

\vskip0.6cm

\begin{convention} In this note, the word {\sl variety\/} will refer to a reduced irreducible scheme of finite type over $\C$. A {\sl subvariety\/} is a (possibly reducible) reduced subscheme which is equidimensional. 

{\bf All Chow groups will be with rational coefficients}: we denote by $A_jX$ the Chow group of $j$--dimensional cycles on $X$ with $\QQ$--coefficients; for $X$ smooth of dimension $n$ the notations $A_jX$ and $A^{n-j}X$ will be used interchangeably. 

The notations $A^j_{hom}(X)$, $A^j_{num}(X)$, $A^j_{AJ}(X)$ will be used to indicate the subgroups of homologically trivial, resp. numerically trivial, resp. Abel--Jacobi trivial cycles.
The contravariant category of Chow motives (i.e., pure motives with respect to rational equivalence as in \cite{Sc}, \cite{MNP}) will be denoted $\MM_{\rm rat}$.



We will write $H^j(X)$ 
and $H_j(X)$ 
to indicate singular cohomology $H^j(X,\QQ)$,
resp. Borel--Moore homology $H_j(X,\QQ)$.
\end{convention}

\section{Preliminary material}

\subsection{Quotient varieties}

\begin{definition} A {\em projective quotient variety\/} is a variety
  \[ X=Y/G\ ,\]
  where $Y$ is a smooth projective variety and $G\subset\hbox{Aut}(Y)$ is a finite group.
  \end{definition}
  
 \begin{proposition}[Fulton \cite{F}]\label{quot} Let $X$ be a projective quotient variety of dimension $n$. Let $A^\ast(X)$ denote the operational Chow cohomology ring. The natural map
   \[ A^i(X)\ \to\ A_{n-i}(X) \]
   is an isomorphism for all $i$.
   \end{proposition}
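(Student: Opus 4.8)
The plan is to reduce the statement to the well-known comparison between operational Chow cohomology and Chow homology for a smooth variety, combined with descent along the finite quotient map $\pi\colon Y\to X=Y/G$. First I would recall Fulton's general construction: for any variety $X$, there is a cap-product pairing $A^i(X)\otimes A_k(X)\to A_{k-i}(X)$, and the ``Poincar\'e duality'' map $A^i(X)\to A_{n-i}(X)$ is defined by sending an operational class $c$ to $c\cap[X]$, where $[X]\in A_n(X)$ is the fundamental class. When $Y$ is smooth of dimension $n$, this map is an isomorphism: this is \cite[Cor.~17.4]{F} (the operational ring of a smooth variety is canonically identified with its Chow homology via cap product with $[Y]$). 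So the content is to transport this from $Y$ to $X$.

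The key step is that, working with $\QQ$-coefficients, the quotient map $\pi$ induces mutually compatible isomorphisms $\pi^\ast\colon A_\ast(X)\xrightarrow{\sim} A_\ast(Y)^G$ and $\pi^\ast\colon A^\ast(X)\xrightarrow{\sim} A^\ast(Y)^G$. For Chow homology this is standard: $\pi_\ast\pi^\ast=\deg(\pi)\cdot\mathrm{id}=|G|\cdot\mathrm{id}$ and $\pi^\ast\pi_\ast=\sum_{g\in G}g_\ast$, so $\tfrac1{|G|}\pi_\ast$ is a left inverse to $\pi^\ast$ with image the $G$-invariants. For operational cohomology one argues similarly: an operational class on $X$ is a compatible family of endomorphisms of $A_\ast(-)$ over all $X$-schemes; pulling back along $\pi$ and averaging gives the projector onto $A^\ast(Y)^G$, using the projection formula to see that $\pi^\ast$ is a ring homomorphism and that $\tfrac1{|G|}\pi_\ast$ splits it. Then one checks that the duality map for $X$ is identified, under these two isomorphisms, with the restriction to $G$-invariants of the duality map for $Y$: this is because $\pi^\ast[X]=[Y]$ in $A_n(Y)_\QQ$ (again after dividing by $|G|$, since $\pi_\ast[Y]=|G|[X]$), and cap product commutes with flat pullback. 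Since the duality map for the smooth variety $Y$ is an isomorphism and is $G$-equivariant, it restricts to an isomorphism on invariants, and hence the duality map for $X$ is an isomorphism.

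I expect the main technical obstacle to be the careful verification that $\pi^\ast$ behaves well on \emph{operational} classes — i.e.\ that pullback of operational cohomology along a finite flat morphism is defined, is a ring map, and satisfies a projection formula compatible with cap products — rather than anything deep. All of this is implicit in \cite[\S17, \S18]{F}, and with $\QQ$-coefficients the averaging trick makes the argument clean; indeed this is exactly the statement recorded in \cite[Example~17.4.10]{F} and used systematically by Fulton, so in the write-up I would simply cite \cite{F} for the comparison isomorphism on a smooth variety and for the fact that quotient varieties ``behave like smooth varieties'' with $\QQ$-coefficients, and spell out only the averaging argument.
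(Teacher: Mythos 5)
Your proposal is correct and takes the same route as the paper, whose entire proof is the citation of \cite[Example 17.4.10]{F} --- the content of that example is precisely the averaging/descent argument you spell out ($A^\ast(X)_\QQ\cong (A^\ast(Y)_\QQ)^G\cong (A_\ast(Y)_\QQ)^G\cong A_\ast(X)_\QQ$). One small nitpick: the quotient map $\pi\colon Y\to X$ need not be flat, so the homology ``pullback'' is better taken to be the inverse of $\tfrac{1}{|G|}\pi_\ast$ on $G$-invariants (Fulton's Example 1.7.6), and the operational pullback exists for arbitrary morphisms by definition of the operational theory; neither point affects your argument.
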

   
   \begin{proof} This is \cite[Example 17.4.10]{F}.
      \end{proof}

\begin{remark} It follows from proposition \ref{quot} that the formalism of correspondences goes through unchanged for projective quotient varieties (this is also noted in \cite[Example 16.1.13]{F}). We can thus consider motives $(X,p,0)\in\MM_{\rm rat}$, where $X$ is a projective quotient variety and $p\in A^n(X\times X)$ is a projector. For a projective quotient variety $X=Y/G$, one readily proves (using Manin's identity principle) that there is an isomorphism
  \[  h(X)\cong h(Y)^G:=(Y,\Delta^G_Y,0)\ \ \ \hbox{in}\ \MM_{\rm rat}\ ,\]
  where $\Delta^G_Y$ denotes the idempotent ${1\over \vert G\vert}{\sum_{g\in G}}\Gamma_g$.  
  \end{remark}

\subsection{Finite--dimensionality}

We refer to \cite{Kim}, \cite{An}, \cite{MNP}, \cite{Iv}, \cite{J4} for basics on the notion of finite--dimensional motive. 
An essential property of varieties with finite--dimensional motive is embodied by the nilpotence theorem:

\begin{theorem}[Kimura \cite{Kim}]\label{nilp} Let $X$ be a smooth projective variety of dimension $n$ with finite--dimensional motive. Let $\Gamma\in A^n(X\times X)_{}$ be a correspondence which is numerically trivial. Then there is $N\in\NN$ such that
     \[ \Gamma^{\circ N}=0\ \ \ \ \in A^n(X\times X)_{}\ .\]
\end{theorem}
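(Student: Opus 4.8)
The plan is to reinterpret the statement motivically and then deduce nilpotence from the behaviour of exterior and symmetric powers. Writing $n=\dim X$, the correspondence $\Gamma\in A^n(X\times X)$ is precisely an endomorphism of the motive $h(X)=(X,\Delta_X,0)$ in $\MM_{\mathrm{rat}}$, with composition of correspondences as ring multiplication and the categorical trace $\mathrm{tr}\colon\mathrm{End}(h(X))\to\mathrm{End}(\mathbf 1)=\QQ$ given by intersection numbers. Under this dictionary ``$\Gamma$ numerically trivial'' means exactly that $\mathrm{tr}(\Gamma\circ\Psi)=0$ for every $\Psi\in\mathrm{End}(h(X))$, i.e.\ $\Gamma$ lies in the radical $\mathcal N$ of the trace pairing, which is the kernel of the functor $\mathrm{End}(h(X))\to\mathrm{End}_{\overline{\MM}}(\bar h(X))$ to the numerical category. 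Thus it suffices to prove that $\mathcal N$ is a nilpotent ideal with a nilpotency bound $N$ depending only on the Kimura dimension of $h(X)$; this bound then furnishes the $N$ in the statement. First I would invoke finite--dimensionality to split $h(X)=M^+\oplus M^-$ with $M^+$ evenly finite--dimensional, say $\wedge^{a+1}M^+=0$, and $M^-$ oddly finite--dimensional, say $\mathrm{Sym}^{b+1}M^-=0$. Since $\mathcal N$ is an ideal, the four blocks $\pi^{\pm}\Gamma\pi^{\pm}$ of $\Gamma$ relative to this decomposition again lie in $\mathcal N$, and it is enough to bound the nilpotency of each block and of their products.

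The core of the argument is the diagonal blocks, and I would treat the even block $f:=\pi^+\Gamma\pi^+\in\mathrm{End}(M^+)$ first (the odd case being symmetric). The key point is a categorical Cayley--Hamilton identity: because $\wedge^{a+1}M^+=0$, the endomorphism $f$ satisfies its own ``characteristic polynomial'' $\chi_f(t)=\sum_{k=0}^{a}(-1)^k e_k\,t^{a-k}$ of degree $a$, where $e_k=\mathrm{tr}(\wedge^k f)\in\QQ$; that is, $\chi_f(f)=0$ in $\mathrm{End}(M^+)$. Granting this, the conclusion is immediate from numerical triviality: for each $m\ge 1$ the power sum $p_m=\mathrm{tr}(f^{\circ m})=\mathrm{tr}\bigl(f\circ f^{\circ(m-1)}\bigr)$ vanishes because $f\in\mathcal N$, and Newton's identities then force every elementary symmetric function $e_k$ $(k\ge 1)$ to vanish as well. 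Hence $\chi_f(t)=t^{a}$, and Cayley--Hamilton yields $f^{\circ a}=0$. The same reasoning with $\mathrm{Sym}$ in place of $\wedge$ (where the trace of a symmetric power carries the opposite sign convention, so that the relevant characteristic polynomial again has vanishing lower coefficients) gives $(\pi^-\Gamma\pi^-)^{\circ b}=0$ for the odd block.

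It remains to deal with the off--diagonal blocks $u=\pi^-\Gamma\pi^+\in\mathrm{Hom}(M^+,M^-)$ and $v=\pi^+\Gamma\pi^-\in\mathrm{Hom}(M^-,M^+)$. Here I would use Kimura's lemma that a morphism from an evenly finite--dimensional motive to an oddly finite--dimensional one has nilpotent ``round trips'': the composites $v\circ u\in\mathrm{End}(M^+)$ and $u\circ v\in\mathrm{End}(M^-)$ are nilpotent, with bounds governed by $a$ and $b$, and this holds without any numerical hypothesis (it is again a formal consequence of $\wedge^{a+1}M^+=0$ and $\mathrm{Sym}^{b+1}M^-=0$ applied to the exterior and symmetric powers of the relevant compositions). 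Assembling the pieces, $\Gamma$ is a $2\times2$ matrix of morphisms whose diagonal entries are nilpotent and whose off--diagonal entries multiply to nilpotent endomorphisms; a direct bookkeeping of word lengths in $\{f,\,\pi^-\Gamma\pi^-,\,u,\,v\}$ then shows $\Gamma^{\circ N}=0$ for an explicit $N=N(a,b)$.

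I expect the main obstacle to be the categorical Cayley--Hamilton identity $\chi_f(f)=0$ used in the second paragraph. Unlike the classical statement for matrices, it must be proved intrinsically inside the rigid symmetric monoidal category $\MM_{\mathrm{rat}}$, manipulating the antisymmetrizing idempotents on the tensor powers $(M^+)^{\otimes k}$ together with the evaluation and coevaluation morphisms supplied by the duality $h(X)^\vee\cong h(X)$; the vanishing $\wedge^{a+1}M^+=0$ enters precisely as the input that truncates the relation at degree $a$. This is the one step that genuinely uses finite--dimensionality rather than mere formal nonsense, and it is also the step one must phrase with care in order to extract the \emph{uniform} nilpotency bound $N$ that the statement requires.
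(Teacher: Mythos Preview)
The paper gives no proof of this theorem; it is simply quoted from Kimura \cite{Kim} as a known result. Your proposal is not being compared against an argument in the paper but against Kimura's original proof, and your sketch is a faithful outline of that proof: the reduction to the trace--radical $\mathcal N$, the splitting $h(X)=M^+\oplus M^-$, and the categorical Cayley--Hamilton combined with Newton's identities for the diagonal blocks are exactly the ingredients Kimura (and the later expositions of Andr\'e and Jannsen) use.

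One point to tighten. For the off--diagonal blocks you assert that $v\circ u\in\mathrm{End}(M^+)$ and $u\circ v\in\mathrm{End}(M^-)$ are nilpotent ``without any numerical hypothesis'', as a formal consequence of $\wedge^{a+1}M^+=0$ and $\mathrm{Sym}^{b+1}M^-=0$. That claim is true, but the justification you give is too vague to stand as written: the vanishing of $\wedge^{a+1}(v\circ u)$ is automatic (it is an endomorphism of the zero object) and says nothing by itself. What one actually uses is either (i) that $v\circ u=\pi^+\Gamma\pi^-\Gamma\pi^+$ again lies in $\mathcal N$ (since $\mathcal N$ is an ideal), so the same Cayley--Hamilton/Newton argument on $M^+$ applies verbatim; or (ii) the genuinely numerical--hypothesis--free fact that a nonzero direct summand cannot be simultaneously even and odd, which forces any ``round trip'' through $M^-$ to be nilpotent once one extracts an idempotent from its characteristic polynomial. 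Either route closes the gap; route (i) is the simpler one and keeps your exposition uniform.
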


 Actually, the nilpotence property (for all powers of $X$) could serve as an alternative definition of finite--dimensional motive, as shown by a result of Jannsen \cite[Corollary 3.9]{J4}.
   Conjecturally, all smooth projective varieties have finite--dimensional motive \cite{Kim}. We are still far from knowing this, but at least there are quite a few non--trivial examples:
 
\begin{remark} 
The following varieties have finite--dimensional motive: abelian varieties, varieties dominated by products of curves \cite{Kim}, $K3$ surfaces with Picard number $19$ or $20$ \cite{P}, surfaces not of general type with $p_g=0$ \cite[Theorem 2.11]{GP}, certain surfaces of general type with $p_g=0$ \cite{GP}, \cite{PW}, \cite{V8}, Hilbert schemes of surfaces known to have finite--dimensional motive \cite{CM}, generalized Kummer varieties \cite[Remark 2.9(\rom2)]{Xu}, \cite{FTV},
 threefolds with nef tangent bundle \cite{Iy}, \cite[Example 3.16]{V3}, fourfolds with nef tangent bundle \cite{Iy2}, log--homogeneous varieties in the sense of \cite{Br} (this follows from \cite[Theorem 4.4]{Iy2}), certain threefolds of general type \cite[Section 8]{V5}, varieties of dimension $\le 3$ rationally dominated by products of curves \cite[Example 3.15]{V3}, varieties $X$ with $A^i_{AJ}(X)_{}=0$ for all $i$ \cite[Theorem 4]{V2}, products of varieties with finite--dimensional motive \cite{Kim}.
\end{remark}

\begin{remark}
It is an embarassing fact that up till now, all examples of finite-dimensional motives happen to lie in the tensor subcategory generated by Chow motives of curves, i.e. they are ``motives of abelian type'' in the sense of \cite{V3}. On the other hand, there exist many motives that lie outside this subcategory, e.g. the motive of a very general quintic hypersurface in $\PP^3$ \cite[7.6]{D}.
\end{remark}

The notion of finite--dimensionality is easily extended to quotient varieties:

\begin{definition} Let $X=Y/G$ be a projective quotient variety. We say that $X$ has finite--dimensional motive if the motive
  \[ h(Y)^G:= (Y, \Delta^G_Y,0)\ \ \ \in \MM_{\rm rat}\]
  is finite--dimensional. (Here, $\Delta^G_Y$ denotes the idempotent ${1\over \vert G\vert}{\sum_{g\in G}}\Gamma_g \in A^n(Y\times Y)$.) 
   \end{definition}
  
  Clearly, if $Y$ has finite--dimensional motive then also $X=Y/G$ has finite--dimensional motive. The nilpotence theorem extends to this set--up:
  
  \begin{proposition}\label{quotientnilp} Let $X=Y/G$ be a projective quotient variety of dimension $n$, and assume $X$ has finite--dimensional motive. Let $\Gamma\in A^n_{num}(X\times X)$. Then there is   
    $N\in\NN$ such that
     \[ \Gamma^{\circ N}=0\ \ \ \ \in A^n(X\times X)_{}\ .\]
  \end{proposition}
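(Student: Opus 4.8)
The plan is to deduce the statement from Kimura's nilpotence theorem, applied not to the motive of $Y$ (which need not be finite--dimensional) but to the summand--type motive $h(Y)^G$, which is finite--dimensional by hypothesis.

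First I would recall, from the Remark following Proposition \ref{quot}, that there is an isomorphism $h(X)\cong h(Y)^G=(Y,\Delta^G_Y,0)$ in $\MM_{\rm rat}$, and that for the projective quotient variety $X$ the formalism of correspondences identifies $\mathrm{End}_{\MM_{\rm rat}}(h(X))$ with $A^n(X\times X)$, composition of motive endomorphisms corresponding to composition of correspondences. Next, since $X\times X=(Y\times Y)/(G\times G)$ is again a projective quotient variety, numerical equivalence on $X\times X$ is defined (Proposition \ref{quot}); I would check that, under the identification above, a correspondence $\Gamma\in A^n(X\times X)$ is numerically trivial as a cycle precisely when the $(G\times G)$--invariant cycle $(f\times f)^\ast\Gamma\in A^n(Y\times Y)$ is numerically trivial on the smooth variety $Y\times Y$, where $f\colon Y\to X$ denotes the quotient map. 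Here ``$\Rightarrow$'' follows from the projection formula, since $\deg\big((f\times f)^\ast\Gamma\cdot W\big)=\deg\big(\Gamma\cdot (f\times f)_\ast W\big)=0$ for all $W\in A^n(Y\times Y)$, and ``$\Leftarrow$'' follows because $(f\times f)_\ast(f\times f)^\ast$ is multiplication by a nonzero rational number.

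Then I would invoke Kimura's theorem in the form valid for an arbitrary finite--dimensional motive $M$: the two--sided ideal of numerically trivial endomorphisms of $M$ is nilpotent (this is the content underlying Theorem \ref{nilp}; see \cite[Prop.~7.5]{Kim}, or \cite[Cor.~3.9]{J4} for the reformulation in terms of the nilpotence property). Applying this to $M=h(X)$, which is finite--dimensional because $h(Y)^G$ is, and to $\Gamma\in A^n_{num}(X\times X)$, which by the previous paragraph is a numerically trivial endomorphism of $h(X)$, yields $\Gamma^{\circ N}=0$ in $A^n(X\times X)$ for some $N\in\NN$.

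The only delicate point is the bookkeeping in the second step: one must reconcile numerical triviality of $\Gamma$ viewed as a cycle on the quotient variety $X\times X$, as an endomorphism of $h(X)$, as a $(G\times G)$--invariant endomorphism of $h(Y)^G=\Delta^G_Y\circ A^n(Y\times Y)\circ\Delta^G_Y$, and as a cycle on the smooth variety $Y\times Y$ --- all of which coincide via the averaging idempotent together with the projection formula. Beyond this there is no geometric obstacle; the essential observation is that it is finite--dimensionality of $h(Y)^G$, rather than of $h(Y)$, that the argument requires, and this is precisely what is assumed.
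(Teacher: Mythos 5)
Your proof is correct and follows essentially the same route as the paper: both transfer $\Gamma$ to a $(G\times G)$--invariant, numerically trivial endomorphism of the finite--dimensional motive $h(Y)^G$ via the quotient map (with the projection formula handling the equivalence of the two notions of numerical triviality) and then apply Kimura's nilpotence theorem there. The only cosmetic difference is that you invoke the isomorphism $h(X)\cong h(Y)^G$ to read off $\Gamma^{\circ N}=0$ directly, whereas the paper descends the nilpotence of $\Gamma_Y={}^t\Gamma_p\circ\Gamma\circ\Gamma_p$ back to $A^n(X\times X)$ by an explicit computation using $\Gamma_p\circ{}^t\Gamma_p=d\,\Delta_X$.
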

  
  \begin{proof} Let $p\colon Y\to X$ denote the quotient morphism.
     We associate to $\Gamma$ a correspondence $\Gamma_Y\in A^n(Y\times Y)$ defined as
    \[ \Gamma_Y:= {}^t \Gamma_p\circ \Gamma\circ \Gamma_p\ \ \ \in A^n(Y\times Y)\ .\]
  By Lieberman's lemma \cite[Lemma 3.3]{V3}, there is equality
   \[ \Gamma_Y =(p\times p)^\ast \Gamma\ \ \ \hbox{in}\ A^n(Y\times Y)\ ,\]
   and so $\Gamma_Y$ is $G\times G$--invariant:
   \[ \Delta_Y^G\circ \Gamma_Y\circ \Delta_Y^G =\Gamma_Y\ \ \ \hbox{in}\ A^n(Y\times Y)\ .\]
   This implies that 
     \[\Gamma_Y\in \Delta_Y^G\circ A^n(Y\times Y)\circ\Delta_Y^G\ ,\] 
     and so
   \[ \Gamma_Y\in\hbox{End}_{\MM_{\rm rat}}\bigl(h(Y)^G\bigr)\ .\]
   Since clearly $\Gamma_Y$ is numerically trivial, and $h(Y)^G$ is finite--dimensional (by assumption), there exists $N\in\NN$ such that
    \[ (\Gamma_Y)^{\circ N} = {}^t \Gamma_p\circ \Gamma\circ\Gamma_p\circ{}^t \Gamma_p\circ \cdots \circ \Gamma_p=0\ \ \ \hbox{in}\ A^n(Y\times Y)\ .\]
    Using the relation $\Gamma_p\circ{}^t \Gamma_p=d\Delta_X$, this boils down to
    \[ d^{N-1}\ \  {}^t \Gamma_p\circ \Gamma^{\circ N}\circ \Gamma_p=0\ \ \ \hbox{in}\ A^n(Y\times Y)\ .\]
    From this, we deduce that also
    \[ \Gamma^{\circ N}= {1\over d^{N+1}} \Gamma_p\circ \Bigl( d^{N-1} \ \ {}^t \Gamma_p\circ \Gamma^{\circ N}\circ \Gamma_p\Bigr) \circ {}^t \Gamma_p=0\ \ \ \hbox{in}\ A^n(X\times X)\ .\]    
     \end{proof}

\subsection{MCK decomposition}
\label{ssmck}

\begin{definition}[Murre \cite{Mur}] Let $X$ be a projective quotient variety of dimension $n$. We say that $X$ has a {\em CK decomposition\/} if there exists a decomposition of the diagonal
   \[ \Delta_X= \Pi_0+ \Pi_1+\cdots +\Pi_{2n}\ \ \ \hbox{in}\ A^n(X\times X)\ ,\]
  such that the $\Pi_i$ are mutually orthogonal idempotents and $(\Pi_i)_\ast H^\ast(X)= H^i(X)$.
\end{definition}

\begin{remark} The existence of a CK decomposition for any smooth projective variety is part of Murre's conjectures \cite{Mur}, \cite{J2}. If a quotient variety $X$
has finite--dimensional motive, and the K\"unneth components are algebraic, then $X$ has a CK decomposition (this can be proven just as \cite{J2}, where this is stated for smooth $X$).
\end{remark}

\begin{definition}[Shen--Vial \cite{SV}] Let $X$ be a projective quotient variety of dimension $n$. Let $\Delta^X_{sm}\in A^{2n}(X\times X\times X)$ be the class of the small diagonal
  \[ \Delta^X_{sm}:=\bigl\{ (x,x,x)\ \vert\ x\in X\bigr\}\ \subset\ X\times X\times X\ .\]
  An  MCK decomposition of $X$ is a CK decomposition $\{\Pi_i\}$ of $X$ that is {\em multiplicative\/}, i.e. it satisfies
  \[ \Pi_k\circ \Delta^X_{sm}\circ (\Pi_i\times \Pi_j)=0\ \ \ \hbox{in}\ A^{2n}(X\times X\times X)\ \ \ \hbox{for\ all\ }i+j\not=k\ .\]
  
  (NB: the acronym ``MCK'' is shorthand for ``multiplicative Chow--K\"unneth''.)
  \end{definition}
  
  \begin{remark} The small diagonal (seen as a correspondence from $X\times X$ to $X$) induces the {\em multiplication morphism\/}
    \[ \Delta^X_{sm}\colon\ \  h(X)\otimes h(X)\ \to\ h(X)\ \ \ \hbox{in}\ \MM_{\rm rat}\ .\]
 Suppose $X$ has a CK decomposition
  \[ h(X)=\bigoplus_{i=0}^{2n} h^i(X)\ \ \ \hbox{in}\ \MM_{\rm rat}\ .\]
  By definition, this decomposition is multiplicative if for any $i,j$ the composition
  \[ h^i(X)\otimes h^j(X)\ \to\ h(X)\otimes h(X)\ \xrightarrow{\Delta^X_{sm}}\ h(X)\ \ \ \hbox{in}\ \MM_{\rm rat}\]
  factors through $h^{i+j}(X)$.
  
  The property of having an MCK decomposition is severely restrictive, and is closely related to Beauville's ``weak splitting property'' \cite{Beau3}. For more ample discussion, and examples of varieties with an MCK decomposition, we refer to \cite[Section 8]{SV} and also \cite{V6}, 
  \cite{SV2}, \cite{FTV}.
    \end{remark}

  \begin{lemma}\label{hk} Let $X, X^\prime$ be birational hyperk\"ahler varieties. Then $X$ has an MCK decomposition if and only if $X^\prime$ has one.
  \end{lemma}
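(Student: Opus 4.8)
The plan is to transport a multiplicative Chow--K\"unneth decomposition from one side to the other by means of a single correspondence; by the symmetry of the situation it is enough to show that if $X$ has an MCK decomposition then so does $X'$. Write $2n=\dim X=\dim X'$ and let $f\colon X\dashrightarrow X'$ be a birational map. By a theorem of Huybrechts, $f$ is an isomorphism in codimension one, so that there are open subsets $U\subset X$ and $U'\subset X'$ whose complements have codimension $\ge 2$ together with an isomorphism $U\xrightarrow{\sim}U'$. Let $\Gamma\in A^{2n}(X\times X')$ be the closure of the graph of $f$, and $\Gamma^\prime\in A^{2n}(X'\times X)$ its transpose. The key input I would invoke is the refinement due to Rie\ss: the correspondence $\Gamma$ induces an isomorphism of Chow motives $h(X)\xrightarrow{\sim}h(X')$, with inverse $\Gamma^\prime$ (so $\Gamma^\prime\circ\Gamma=\Delta_X$ and $\Gamma\circ\Gamma^\prime=\Delta_{X'}$), this isomorphism respects the cohomological grading, i.e. $\Gamma_\ast H^i(X)=H^i(X')$ for all $i$, and it is \emph{multiplicative}, i.e.
  \[ \Gamma\circ \Delta^X_{sm}\ =\ \Delta^{X'}_{sm}\circ(\Gamma\times\Gamma)\ \ \ \hbox{in}\ A^\ast(X\times X\times X')\ .\]
All three properties are ultimately forced by the fact that $f$ restricts to a genuine isomorphism over the big open subsets $U$, $U'$: there the two small diagonals, and the two Hodge structures, literally coincide, and the discrepancy is supported in codimension $\ge 2$.

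Granting such a $\Gamma$, I would set
  \[ \Pi_i^{X'}\ :=\ \Gamma\circ \Pi_i^X\circ \Gamma^\prime\ \ \ \in\ A^{2n}(X'\times X')\ ,\]
where $\{\Pi_i^X\}_{i=0}^{4n}$ is the given MCK decomposition of $X$. Using $\Gamma^\prime\circ\Gamma=\Delta_X$ and $\Gamma\circ\Gamma^\prime=\Delta_{X'}$, a purely formal computation shows that the $\Pi_i^{X'}$ are mutually orthogonal idempotents with $\sum_i \Pi_i^{X'}=\Delta_{X'}$; that $(\Pi_i^{X'})_\ast H^\ast(X')=H^i(X')$ is immediate from $\Gamma_\ast H^i(X)=H^i(X')$; and for $i+j\not=k$ one computes
  \[ \Pi_k^{X'}\circ \Delta^{X'}_{sm}\circ(\Pi_i^{X'}\times \Pi_j^{X'})\ =\ \Gamma\circ\Bigl(\Pi_k^X\circ \Delta^X_{sm}\circ(\Pi_i^X\times \Pi_j^X)\Bigr)\circ(\Gamma^\prime\times\Gamma^\prime)\ =\ 0\ ,\]
where the first equality uses the identity of correspondences $(A\circ B\circ C)\times(D\circ E\circ F)=(A\times D)\circ(B\times E)\circ(C\times F)$, the multiplicativity relation $\Delta^{X'}_{sm}\circ(\Gamma\times\Gamma)=\Gamma\circ\Delta^X_{sm}$, and the cancellation $\Gamma^\prime\circ\Gamma=\Delta_X$, while the second equality is the MCK relation for $X$. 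Thus $\{\Pi_i^{X'}\}$ is an MCK decomposition of $X'$, and the lemma follows.

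The step I expect to be the real obstacle is the first one: what is needed is not merely that $X$ and $X'$ have isomorphic Chow rings (as graded $\QQ$--algebras), but that the isomorphism can be realized by a correspondence that is simultaneously multiplicative and grading--preserving. The geometric reason for this is transparent --- a birational map between hyperk\"ahler varieties is an isomorphism away from loci of codimension $\ge 2$ on either side --- but upgrading this to an honest statement about the composition of correspondences requires controlling the cycles supported on the excluded loci, and this is precisely the content of the cited theorems of Huybrechts and Rie\ss, which I would quote rather than reprove.
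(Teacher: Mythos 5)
Your proposal is correct and follows essentially the same route as the paper: both define $\Pi_i^{X'}:=\Gamma\circ\Pi_i^X\circ{}^t\Gamma$ and reduce everything to Rie\ss's two facts, namely that $\Gamma$ and its transpose are mutually inverse isomorphisms of Chow motives and that the correspondence is compatible with the small diagonals (the paper states this as $\Gamma\circ\Delta^X_{sm}\circ{}^t\Gamma_{\gamma\times\gamma}=\Delta^{X'}_{sm}$, which is equivalent to your form of the relation given the invertibility). The only difference is cosmetic: the paper, like you, quotes rather than reproves the small-diagonal compatibility, attributing it to the argument of Rie\ss\ as explained in Shen--Vial.
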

  
  \begin{proof} This is noted in \cite[Introduction]{V6}; the idea is that Rie\ss's result \cite{Rie} implies that $X$ and $X^\prime$ have isomorphic Chow motives and the isomorphism is compatible with the multiplicative structure. 
   
More precisely: let $\gamma\colon X\dashrightarrow X^\prime$ be a birational map between hyperk\"ahler varieties of dimension $n$, and suppose $\{\Pi^X_i\}$ is an MCK decomposition for $X$. Let $\Delta^X_{sm}, \Delta^{X^\prime}_{sm}$ denote the small diagonal of $X$ resp. $X^\prime$. As explained in \cite[Section 6]{SV}, the argument of \cite{Rie} gives the equality
  \[ \Gamma_\gamma\circ \Delta^X_{sm}\circ {}^t \Gamma_{\gamma\times \gamma} = \Delta^{X^\prime}_{sm}\ \ \ \hbox{in}\ A^{2n}(X^\prime\times X^\prime\times X^\prime)\ .\]
  The prescription
   \[ \Pi^{X^\prime}_i:= \Gamma_\gamma\circ \pi^X_i\circ {}^t \Gamma_\gamma\ \ \ \in A^n(X^\prime\times X^\prime) \]
   defines a CK decomposition for $F^\prime$.
   (The $\Pi^{X^\prime}_i$ are orthogonal idempotents thanks to Rie\ss's result that $\Gamma_\gamma\circ {}^t \Gamma_\gamma=\Delta_{X^\prime}$ and ${}^t \Gamma_\gamma   \circ \Gamma_\gamma=\Delta_X$ \cite{Rie}.)
        
   To see this CK decomposition $\{\Pi^{X^\prime}_i\}$ is multiplicative, let us consider integers $i,j,k$ such that $i+j\not=k$. It follows from the above equalities that
   \[  \begin{split} \Pi^{X^\prime}_k\circ \Delta^{X^\prime}_{sm}\circ (\Pi^{X^\prime}_i\times \Pi^{X^\prime}_j) &=\Gamma_\gamma\circ \Pi^X_k\circ {}^t \Gamma_\gamma\circ \Gamma_\gamma\circ \Delta^X_{sm}\circ
    {}^t \Gamma_{\gamma\times\gamma}\circ \Gamma_{\gamma\times\gamma}\circ (\Pi^X_i\times \Pi^X_j)\circ {}^t \Gamma_\gamma\\
                 &=\Gamma_\gamma\circ \Pi^X_k\circ \Delta^X_{sm}\circ (\Pi^X_i\times \Pi^X_j)\circ {}^t \Gamma_\gamma\\
                 &=0\ \ \ \hbox{in}\ A^{2n} (X^\prime\times X^\prime)\ .\\
                 \end{split} \]
        (Here we have again used Rie\ss's result that $\Gamma_\gamma\circ {}^t \Gamma_\gamma=\Delta_{X^\prime}$ and ${}^t \Gamma_\gamma   \circ \Gamma_\gamma=\Delta_X$.)
          \end{proof}       

 \subsection{Niveau filtration}
 
 \begin{definition}[Coniveau filtration \cite{BO}]\label{con} Let $X$ be a quasi--projective variety. The {\em coniveau filtration\/} on cohomology and on homology is defined as
  \[\begin{split}   N^c H^i(X,\QQ)&= \sum \ima\bigl( H^i_Y(X,\QQ)\to H^i(X,\QQ)\bigr)\ ;\\
                           N^c H_i(X,\QQ)&=\sum \ima \bigl( H_i(Z,\QQ)\to H_i(X,\QQ)\bigr)\ ,\\
                           \end{split}\]
   where $Y$ runs over codimension $\ge c$ subvarieties of $X$, and $Z$ over dimension $\le i-c$ subvarieties.
 \end{definition}

Vial introduced the following variant of the coniveau filtration:

\begin{definition}[Niveau filtration \cite{V4}]\label{niv} Let $X$ be a smooth projective variety. The {\em niveau filtration} on homology is defined as
  \[ \wt{N}^j H_i(X)=\sum_{\Gamma\in A_{i-j}(Z\times X)_{}} \ima\bigl( H_{i-2j}(Z)\to H_i(X)\bigr)\ ,\]
  where the union runs over all smooth projective varieties $Z$ of dimension $i-2j$, and all correspondences $\Gamma\in A_{i-j}(Z\times X)_{}$.
  The niveau filtration on cohomology is defined as
  \[   \wt{N}^c H^iX:=   \wt{N}^{c-i+n} H_{2n-i}X\ .\]
  
\end{definition}

\begin{remark}\label{is}
The niveau filtration is included in the coniveau filtration:
  \[ \wt{N}^j H^i(X)\subset N^j H^i(X)\ .\] 
  These two filtrations are expected to coincide; indeed, Vial shows this is true if and only if the Lefschetz standard conjecture is true for all varieties \cite[Proposition 1.1]{V4}. 
  
  Using the truth of the Lefschetz standard conjecture in degree $\le 1$, it can be checked \cite[page 415 "Properties"]{V4} that the two filtrations coincide in a certain range:
  \[  \wt{N}^j H^i(X)= N^j H^iX\ \ \ \hbox{for\ all\ }j\ge {i-1\over 2} \ .\]
  \end{remark}

\subsection{Refined CK decomposition}

\begin{theorem}[Vial \cite{V4}]\label{pi_2} Let $X$ be a smooth projective variety of dimension $n\le 5$. Assume the Lefschetz standard conjecture $B(X)$ holds (in particular, the K\"unneth components $\pi_i\in H^{2n}(X\times X)$ are algebraic). Then there is a splitting into mutually orthogonal idempotents
  \[ \pi_i=\sum_j \pi_{i,j}\ \ \ \in H^{2n}(X\times X)\ ,\]
  such that
   \[ (\pi_{i,j})_\ast H^\ast(X) =gr^j_{\wt{N}} H^i(X)\  .\]
 (Here, the graded $  gr^j_{\wt{N}} H^i(X)$ can be identified with a Hodge substructure of $H^i(X)$ using the polarization.)  
In particular,  
   \[ \begin{split} &(\pi_{2,1})_\ast H^j(X) = H^{2}(X)\cap F^1\ ,\\
                          &(\pi_{2,0})_\ast H^j(X)= H^2_{tr}(X)\ .\\
                       \end{split}     \]
                      (Here $F^\ast$ denotes the Hodge filtration, and $H^2_{tr}(X)$ is the orthogonal complement to $H^2(X)\cap F^1$ under the pairing
        \[ \begin{split} H^2(X)\otimes H^2(X)\ &\to\ \QQ\ ,\\
                                a\otimes b\ &\mapsto\ a\cup h^{n-2}\cup b\ .)\\
                         \end{split} \]      
 \end{theorem}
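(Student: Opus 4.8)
The plan is to construct the refinement $\pi_i=\sum_j\pi_{i,j}$ in $H^{2n}(X\times X)$ one K\"unneth component at a time, by induction on $\dim X$. From $B(X)$ one obtains that the Lefschetz operator $L$ (cup-product with the hyperplane class $h$) and its inverse $\Lambda$ are algebraic; hence the K\"unneth projectors $\pi_i$ are algebraic and the hard Lefschetz isomorphisms $H^i(X)\xrightarrow{\sim}H^{2n-i}(X)$ are induced by algebraic cycles. Since the niveau filtration $\wt N^\bullet$ is compatible with hard Lefschetz, it is enough to produce the $\pi_{i,j}$ for $i\le n$; and since one only needs the pieces with $j\ge1$ and can then set $\pi_{i,0}:=\pi_i-\sum_{j\ge1}\pi_{i,j}$, the auxiliary varieties entering the definition of $\wt N^jH^i(X)$ have dimension $i-2j\le n-2$.

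The key lemma is that a sub-Hodge structure of the shape $V=\wt N^jH^i(X)$ is cut out by an algebraic idempotent of $H^{2n}(X\times X)$. Finite-dimensionality of $H^i(X)$ lets one write $V=\Gamma_\ast H_{i-2j}(Z)$ for a single smooth projective $Z$ of dimension $i-2j$ (a finite disjoint union of the varieties furnished by the definition of $\wt N^\bullet$) and a single algebraic correspondence $\Gamma$. Applying the inductive hypothesis to $Z$ — whose dimension is $\le n-2\le 3$, and for which the Lefschetz standard conjecture is classical once $\dim Z\le 2$ — the motive $h(Z)$ carries a refined Chow--K\"unneth decomposition, so in particular $\ker(\Gamma_\ast)\subseteq H_{i-2j}(Z)$ is cut out by an algebraic projector on $h(Z)$. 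Transposing $\Gamma$, twisting by suitable powers of the algebraic operators $L$ and $\Lambda$ so as to land in the correct bidegree, and composing with this projector, one builds an algebraic ``adjoint'' $\Psi\in A_\ast(X\times Z)$ for which $\Gamma\circ\Psi$ acts on $H^\ast(X)$ as a nonzero scalar multiple of the orthogonal projector onto $V$ for the polarisation form. Normalising and passing to the associated idempotent then produces an algebraic class $q_{i,j}\in H^{2n}(X\times X)$ with $(q_{i,j})_\ast H^\ast(X)=V$.

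Next one organises the $q_{i,j}$ into a mutually orthogonal system refining $\pi_i$. As the steps $\wt N^jH^i(X)$ form a decreasing chain of sub-Hodge structures, each now algebraically cut out, one works inside the finite-dimensional $\QQ$-algebra $R$ of algebraic classes in $H^{2n-i}(X)\otimes H^i(X)\cong\mathrm{End}(H^i(X))$, which is semisimple by virtue of the polarisation, and runs a Gram--Schmidt procedure from the deepest nonzero step upwards: $\pi_{i,j}$ is taken to be the idempotent of $R$ cutting out the orthogonal complement of $\wt N^{j+1}H^i(X)$ inside $\wt N^jH^i(X)$. The $\pi_{i,j}$ so obtained are mutually orthogonal idempotents summing to $\pi_i$, with $(\pi_{i,j})_\ast H^\ast(X)=gr^j_{\wt N}H^i(X)$. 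The base of the induction and the smallest degrees are checked directly: for $i=0,1$ (and dually $i=2n-1,2n$) there is a single graded piece; for $i=2$, the Lefschetz $(1,1)$ theorem gives $\wt N^1H^2(X)=NS(X)\otimes\QQ=H^2(X)\cap F^1$, the Hodge--Riemann relations identify its orthogonal complement for $a\otimes b\mapsto a\cup h^{n-2}\cup b$ with $H^2_{tr}(X)$, and $\pi_{2,1}=\sum_k[D_k]\otimes[D_k^\vee]$ (with $\{D_k\}$ a basis of $NS(X)\otimes\QQ$ and $\{D_k^\vee\}$ the dual basis for that pairing) is manifestly algebraic; this gives the asserted descriptions of $(\pi_{2,1})_\ast$ and $(\pi_{2,0})_\ast$, and for surfaces the whole statement is thereby unconditional.

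The step I expect to be the main obstacle is the algebraicity asserted in the second paragraph — manufacturing the adjoint $\Psi$, equivalently producing an algebraic cycle class that cuts out exactly the niveau step $\wt N^jH^i(X)$ and no more. Both hypotheses enter here in an essential way: $B(X)$ supplies the algebraic operator $\Lambda$ without which the ``transpose-and-twist'' construction cannot be carried out at the level of cycles, while the bound $n\le 5$ is precisely what keeps the auxiliary varieties $Z$ of dimension $\le 3$, the range in which (via the induction, resting ultimately on the classical Lefschetz standard conjecture for surfaces) one controls the motive $h(Z)$; the case $\dim Z=3$, which occurs only in the top middle degree $i=n=5$, is the most delicate, and for $n\ge 6$ the argument would demand the Lefschetz standard conjecture for auxiliary varieties of dimension $\ge 4$, which is not known.
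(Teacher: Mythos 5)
The paper offers no proof of this statement beyond the citation ``This is \cite[Theorem 1]{V4}'', so your sketch has to be measured against Vial's argument rather than against anything in the text. At the level of strategy it is a faithful reconstruction of that argument: use $B(X)$ to make $L$, $\Lambda$, the K\"unneth projectors and the hard Lefschetz isomorphisms algebraic and reduce to $i\le n$; for each step of $\wt{N}^\bullet H^i(X)$, produce an algebraic idempotent with that image by composing a supporting correspondence $\Gamma$ from a variety $Z$ of dimension $i-2j$ with an algebraic adjoint; then split $\pi_i$ by a Gram--Schmidt procedure inside the semisimple algebra of algebraic correspondences acting on $H^i(X)$ (semisimple over $\C$ because $B(X)$ together with the Hodge--Riemann relations makes this algebra closed under adjoints for a form that is definite after the Weil twist). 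Your treatment of $i=2$ via Lefschetz $(1,1)$ and $\pi_{2,1}=\sum_k D_k\otimes D_k^\vee$ is exactly right. Two local imprecisions are harmless but worth flagging: $\Gamma\circ\Psi$ is in general only a self-adjoint endomorphism with image $V$ that is invertible on $V$, not a scalar multiple of a projector (your ``pass to the associated idempotent'' is the correct repair); and a refined CK decomposition of $h(Z)$ does not by itself cut out the particular sub-Hodge structure $\ker(\Gamma_\ast)\subset H_{i-2j}(Z)$ --- what one actually needs from $Z$ is the algebraicity of adjoints on $H_{i-2j}(Z)$, i.e.\ a $B(Z)$-type input.

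That last point is where the genuine gap sits. For $n=5$ the pair $(i,j)=(5,1)$ forces $\dim Z=3$, and your ``inductive hypothesis'' then requires the Lefschetz standard conjecture for a threefold, which is not classical, does not follow from the surface case, and is not supplied by the theorem's hypothesis (which concerns the ambient $X$, not the supports). As written, your argument therefore proves the statement only for $n\le 4$; the sentence asserting that $n\le5$ keeps $Z$ ``in the range in which one controls $h(Z)$'' is precisely the point at issue rather than something you have established. The case is nevertheless salvageable, and this is the extra idea your sketch is missing: only the middle homology $H_3(Z)\cong H^3(Z)$ of the threefold support intervenes, and its Lefschetz decomposition $H^3(Z)=H^3_{prim}(Z)\oplus L\,H^1(Z)$ is algebraically split using only the degree-one Lefschetz standard conjecture (known unconditionally via the Picard and Albanese varieties: the projector is $L\circ(L^2\colon H^1\to H^5)^{-1}\circ L$). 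This makes the polarised adjoint of $\Gamma$ on the $Z$-side algebraic without invoking $B(Z)$, after which your ``transpose-and-twist'' construction and the semisimplicity argument go through; Vial's own route to the same end passes through algebraic representatives and intermediate Jacobians as in \cite{V2}. Without some such input, the $(5,1)$ piece --- and hence the case $n=5$ actually used elsewhere in this paper --- is not covered by your proof.
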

 
 \begin{proof} This is \cite[Theorem 1]{V4}.
 \end{proof}

\begin{theorem}[Vial \cite{V4}]\label{Pi_2} Let $X$ be as in theorem \ref{pi_2}. Assume in addition $X$ has finite--dimensional motive. Then there exists a CK decomposition $\Pi_i\in A^n(X\times X)$, and a splitting into mutually orthogonal idempotents
  \[ \Pi_i=\sum_j \Pi_{i,j}\ \ \ \in A^n(X\times X)\ ,\]
  such that 
    \[  \Pi_{i,j}=\pi_{i,j}\ \ \ \hbox{in}\ H^{2n}(X\times X)\ ,\]
   and
   \[  (\Pi_{2i,i})_\ast  A^k(X)=0\ \ \ \hbox{for\ all\ }k\not= i\ . \]
 The motive $h_{i,0}(X)=(X,\Pi_{i,0},0)\in \MM_{\rm rat}$ is well--defined up to isomorphism. 
    \end{theorem}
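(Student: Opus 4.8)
The plan is to build the refined Chow--Künneth decomposition $\Pi_i = \sum_j \Pi_{i,j}$ by lifting the cohomological decomposition $\pi_i = \sum_j \pi_{i,j}$ from Theorem~\ref{pi_2}, using finite--dimensionality to do the lifting, in the same spirit as the proof of Theorem~\ref{Pi_2} in \cite{V4}. First I would invoke Theorem~\ref{pi_2} to obtain the mutually orthogonal idempotents $\pi_{i,j}\in H^{2n}(X\times X)$ with $(\pi_{i,j})_\ast H^\ast(X) = gr^j_{\wt N}H^i(X)$; since $X$ has finite--dimensional motive, the Künneth components $\pi_i$ are algebraic, hence so are the $\pi_{i,j}$ (they are polynomials in the $\pi_i$ and in algebraic operators provided by the Lefschetz standard conjecture $B(X)$, which holds for $X$). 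Pick any algebraic representatives $\bar\Pi_{i,j}\in A^n(X\times X)$ of the $\pi_{i,j}$. These are idempotent and orthogonal only modulo homological equivalence, but the differences $\bar\Pi_{i,j}\circ\bar\Pi_{k,l} - \delta_{(i,j),(k,l)}\bar\Pi_{i,j}$ are homologically — hence numerically — trivial correspondences.

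The key step is then a standard idempotent--completion argument: since the motive of $X$ (or $h(X)^G$ in the quotient case) is finite--dimensional, the nilpotence theorem (Theorem~\ref{nilp}, or Proposition~\ref{quotientnilp} for a quotient variety) applies to numerically trivial self--correspondences. Concretely, one first replaces the collection $\{\bar\Pi_{i,j}\}$ by genuine mutually orthogonal idempotents: arguing by an iterated application of the elementary lemma that a correspondence $P$ with $P\circ P - P$ nilpotent can be replaced by a true idempotent $P' = P + (\text{correction terms})$ with $P' \equiv P$ modulo the nilpotent ideal — and this correction commutes with everything commuting with $P$ — one produces $\Pi_{i,j}\in A^n(X\times X)$ which are mutually orthogonal idempotents, sum to $\Delta_X$, and satisfy $\Pi_{i,j} = \pi_{i,j}$ in $H^{2n}(X\times X)$. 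Setting $\Pi_i := \sum_j \Pi_{i,j}$ gives the CK decomposition. The orthogonality and nilpotence manipulations are exactly those carried out in \cite{J4} and \cite{V4} for smooth varieties, and they go through verbatim because, by Proposition~\ref{quot}, the formalism of correspondences is available for projective quotient varieties.

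Next I would establish the vanishing $(\Pi_{2i,i})_\ast A^k(X) = 0$ for $k\neq i$. The point is that $(\Pi_{2i,i})_\ast H^\ast(X) = gr^i_{\wt N} H^{2i}(X)$, which by construction is a piece of niveau exactly $i$ in degree $2i$; so $\Pi_{2i,i}$ is supported (via a correspondence argument from the definition of the niveau filtration, Definition~\ref{niv}) on $Z\times X$ for $Z$ of dimension $2i - 2i = 0$, i.e. on a finite set of points. A correspondence factoring through a zero--dimensional variety can only act nontrivially on $A^i$; combined with finite--dimensionality (to pass from the cohomological support statement to a statement valid modulo rational equivalence, absorbing the homologically trivial error using nilpotence again), this forces $(\Pi_{2i,i})_\ast$ to kill $A^k(X)$ for all $k\neq i$. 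This is the argument of \cite[Section~2]{V4}.

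Finally, the well--definedness of $h_{i,0}(X) = (X,\Pi_{i,0},0)$ up to isomorphism: if $\{\Pi_{i,j}\}$ and $\{\Pi'_{i,j}\}$ are two such refined CK decompositions, then $\Pi'_{i,0}\circ\Pi_{i,0}$ and $\Pi_{i,0}\circ\Pi'_{i,0}$ give mutually inverse morphisms between $(X,\Pi_{i,0},0)$ and $(X,\Pi'_{i,0},0)$ up to a nilpotent correction — both composites agree with the relevant identity modulo numerical equivalence, hence modulo a nilpotent ideal by Theorem~\ref{nilp} (resp. Proposition~\ref{quotientnilp}), and a morphism that is an isomorphism modulo a nilpotent ideal is an isomorphism. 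The main obstacle, as always with these lifting arguments, is bookkeeping: ensuring that each successive nilpotent correction in the idempotent--completion preserves orthogonality with the already--constructed projectors and does not disturb the cohomology classes; but this is routine once one works inside the nilpotent (two--sided) ideal $\ker(A^n(X\times X)\to H^{2n}(X\times X))$, which is genuinely nilpotent by finite--dimensionality.
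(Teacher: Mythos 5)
The paper gives no argument here beyond citing \cite{V4} (Theorem 2, and Proposition 1.8 together with \cite{KMP} for the last statement), and your reconstruction follows exactly the strategy of that cited proof: lift the cohomological projectors $\pi_{i,j}$ to mutually orthogonal Chow idempotents using the nilpotence theorem, deduce $(\Pi_{2i,i})_\ast A^k(X)=0$ for $k\neq i$ from the fact that a representative of $\pi_{2i,i}$ may be chosen completely decomposed (of the form $\sum_k \alpha_k\times\beta_k$ with $\alpha_k\in A^{n-i}(X)$ and $\beta_k\in A^{i}(X)$, which kills $A^k(X)$ for $k\neq i$ on the nose) plus one further nilpotence absorption, and obtain well--definedness of $h_{i,0}(X)$ from the standard fact that a morphism of motives which is an isomorphism modulo a nilpotent ideal is an isomorphism. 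Your proposal is correct and takes essentially the same route as the source the paper relies on.
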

    
  \begin{proof} This is \cite[Theorem 2]{V4}. The last statement follows from \cite[Proposition 1.8]{V4} combined with \cite[Theorem 7.7.3]{KMP}.
  \end{proof}

\begin{remark} In case $X$ is a surface with finite--dimensional motive, there is equality
   \[h_{2,0}(X)=t_2(X)\ \ \ \hbox{in}\ \MM_{\rm rat}\ ,\]
   where $t_2(X)$ is the ``transcendental part of the motive'' constructed for any surface (not necessarily with finite--dimensional motive) in \cite{KMP}.
\end{remark} 

\begin{lemma}\label{indecomp} Let $X$ be a smooth projective variety as in theorem \ref{Pi_2}, and assume 
  \[ \dim H^2(X,\OO_X)=1\ .\] 
  Then the motive 
  \[ h_{2,0}(X)\in \MM_{\rm rat} \]
  is {\em indecomposable\/}, i.e. any non--zero submotive $M\subset h_{2,0}(X)$ is equal to $h_{2,0}(X)$.
  \end{lemma}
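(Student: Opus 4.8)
The plan is to reduce the statement to a purely Hodge--theoretic fact, namely that the $\QQ$--Hodge structure $H^2_{tr}(X)$ is \emph{simple}, and then to transfer this from cohomology to motives using finite--dimensionality (Theorem \ref{nilp}). By Theorems \ref{pi_2} and \ref{Pi_2}, the Betti realization of $h_{2,0}(X)=(X,\Pi_{2,0},0)$ is concentrated in degree $2$ and equals the sub--Hodge structure $H^2_{tr}(X)\subset H^2(X)$ orthogonal (for the polarization) to the N\'eron--Severi part $NS(X)_\QQ=\wt N^1H^2(X)$. Since $NS(X)_\QQ$ is precisely the $(1,1)$--part $H^{1,1}(X)\cap H^2(X,\QQ)$ by the Lefschetz $(1,1)$--theorem, the orthogonal splitting $H^2(X)=NS(X)_\QQ\oplus H^2_{tr}(X)$ shows on the one hand that $H^2_{tr}(X)$ contains no nonzero Hodge class, and on the other that $h^{2,0}\bigl(H^2_{tr}(X)\bigr)=h^{2,0}(X)=\dim H^2(X,\OO_X)=1$.

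Granting this, I would prove simplicity of $H^2_{tr}(X)$ as follows. Let $0\neq W\subset H^2_{tr}(X)$ be a sub--Hodge structure and write $H^2_{tr}(X)=W\oplus W^{\perp}$ with $W^{\perp}$ again a sub--Hodge structure (polarized Hodge structures are semisimple). As the $(2,0)$--part of $H^2_{tr}(X)$ is one--dimensional, exactly one of the two summands -- say $W$ -- has nonzero $(2,0)$--part; then $W_\C\supseteq H^{2,0}\oplus H^{0,2}$, which forces $W^{\perp}_\C\subseteq H^{1,1}$, so that $W^{\perp}$ is a sub--Hodge structure of type $(1,1)$, hence consists entirely of Hodge classes, hence is $0$. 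Therefore $W=H^2_{tr}(X)$, i.e. $H^2_{tr}(X)$ is simple.

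Finally, to deduce indecomposability of the motive, I would take a nonzero submotive $M=(X,p,0)$ of $h_{2,0}(X)$, so that $p\in A^n(X\times X)$ is an idempotent with $p=\Pi_{2,0}\circ p\circ\Pi_{2,0}$. If $p$ acted as $0$ on $H^\ast(X)$ it would be numerically trivial, so by Theorem \ref{nilp} (applicable since $X$ has finite--dimensional motive) some $p^{\circ N}$ would vanish; as $p$ is idempotent this gives $p=0$, contradicting $M\neq 0$. Hence $p_\ast H^2_{tr}(X)$ is a nonzero sub--Hodge structure, so equals $H^2_{tr}(X)$ by simplicity, and being an idempotent endomorphism of the finite--dimensional space $H^2_{tr}(X)$ it must be the identity there; moreover $p=\Pi_{2,0}\circ p\circ\Pi_{2,0}$ forces $p_\ast$ to vanish on $H^j(X)$ for $j\neq 2$. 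Thus $p$ and $\Pi_{2,0}$ act identically on $H^\ast(X)$, so $\Pi_{2,0}-p$ is a numerically trivial idempotent in $A^n(X\times X)$, hence $0$ by Theorem \ref{nilp}; that is, $p=\Pi_{2,0}$ and $M=h_{2,0}(X)$. The one genuinely delicate point is the simplicity of $H^2_{tr}(X)$, which crucially uses $h^{2,0}(X)=1$ to collapse the problem to ruling out $(1,1)$--type sub--Hodge structures; for $\dim H^2(X,\OO_X)>1$ the motive $h_{2,0}(X)$ need not be indecomposable.
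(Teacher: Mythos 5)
Your proof is correct and follows essentially the same route as the paper's: show that $H^2_{tr}(X)$ admits no nonzero proper Hodge substructure (using $h^{2,0}=1$ and the absence of Hodge classes in $H^2_{tr}$), deduce that any nonzero submotive of $h_{2,0}(X)$ has full Betti realization, and lift the resulting equality of homological motives to $\MM_{\rm rat}$ via finite--dimensionality. You merely spell out the details the paper leaves implicit (semisimplicity of polarized Hodge structures, and the nilpotence argument showing a homologically trivial idempotent vanishes), so there is nothing to add.
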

  
  \begin{proof} (This kind of argument is well--known, cf. for instance \cite[Corollary 3.11]{V8} or \cite[Corollary 2.10]{Ped} where this is proven for $K3$ surfaces with finite--dimensional motive.)
The idea is that there are no non--zero Hodge substructures strictly contained in $H^2_{tr}(X)$. Since the motive $M\subset h_{2,0}(X)$ defines a Hodge substructure 
  \[ H^\ast(M)\ \subset\ H^2_{tr}(X)\ ,\]
  we must have $H^\ast(M)=H^2_{tr}(X)$ and thus an equality of homological motives
  \[  M=h_{2,0}(X)\ \ \ \hbox{in}\ \MM_{\rm hom}\ .\]
  Using finite--dimensionality of $X$, it follows there is an equality of Chow motives
  \[  M=h_{2,0}(X)\ \ \ \hbox{in}\ \MM_{\rm rat}\ .\]
 \end{proof}

\begin{lemma}\label{equiv} Let $X_1, X_2$ be two projective quotient varieties of dimension $4$. Assume $X_1, X_2$ have finite--dimensional motive, verify the Lefschetz standard conjecture and
  \[ N^1_H  H^4(X_j)= \wt{N}^1 H^4(X_j)\ \ \ \hbox{for\ }j=1,2\ ,\]
  where $N^\ast_H$ is the Hodge coniveau filtration.
  Let $\Gamma\in A^4(X_1\times X_2)$ and $\Psi\in A^4(X_2\times X_1)$. The following are equivalent:
  
  \noindent
  (\rom1) 
    \[  \Gamma_\ast\colon\ \ H^{0,4}(X_1)\ \to\ H^{0,4}(X_2)\]
    is an isomorphism, with inverse $\Psi_\ast$;
    
  \noindent
  (\rom2)
      \[  \Gamma_\ast\colon\ \ H^{4}_{tr}(X_1)\ \to\ H^{4}_{tr}(X_2)\]
    is an isomorphism, with inverse $\Psi_\ast$;

\noindent
(\rom3)
  \[ \Gamma\colon\ \ h_{4,0}(X_1)\ \to\ h_{4,0}(X_2)\ \ \ \hbox{in}\ \MM_{\rm rat}\]
  is an isomorphism, with inverse $\Psi$.
  \end{lemma}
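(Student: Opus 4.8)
The plan is to separate the three--way equivalence into (i)$\Leftrightarrow$(ii), which is purely Hodge--theoretic, and (ii)$\Leftrightarrow$(iii), which combines the Betti realization functor with finite--dimensionality. First I would set up the players: since each $X_j$ is a $4$--dimensional projective quotient variety with finite--dimensional motive satisfying the Lefschetz standard conjecture, Theorems \ref{pi_2} and \ref{Pi_2} apply (their proofs carry over verbatim to quotient varieties, the formalism of correspondences being available by Proposition \ref{quot}), providing refined Chow--K\"unneth projectors $\Pi^{X_j}_{4,0}$ with $h_{4,0}(X_j)=(X_j,\Pi^{X_j}_{4,0},0)$, whose Betti realization is the transcendental part $H^4_{tr}(X_j)=gr^0_{\wt N}H^4(X_j)$. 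Given $\Gamma$ and $\Psi$ as in the statement, set $\Gamma^\prime:=\Pi^{X_2}_{4,0}\circ\Gamma\circ\Pi^{X_1}_{4,0}$ and $\Psi^\prime:=\Pi^{X_1}_{4,0}\circ\Psi\circ\Pi^{X_2}_{4,0}$, morphisms $h_{4,0}(X_1)\to h_{4,0}(X_2)$ and back, realizing to the maps induced by $\Gamma_\ast$ resp.\ $\Psi_\ast$ on transcendental parts; since $\Pi^{X_j}_{4,0}$ acts as the identity on $H^{0,4}(X_j)$, the $(0,4)$--component of $(\Gamma^\prime)_\ast$ is exactly $\Gamma_\ast|_{H^{0,4}(X_1)}$.

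For (i)$\Leftrightarrow$(ii): the crucial point, and the only place the hypothesis $N^1_H H^4(X_j)=\wt N^1 H^4(X_j)$ is used, is that $H^4_{tr}(X_j)$ is a polarized weight--$4$ Hodge structure in which every nonzero sub--Hodge--structure $W$ has $W^{0,4}\neq0$: indeed $W^{0,4}=0$ would force, by conjugation, $W^{4,0}=0$, so $W$ would have Hodge coniveau $\ge1$, hence $W\subseteq N^1_H H^4=\wt N^1 H^4$, contradicting $W\subseteq H^4_{tr}=(\wt N^1 H^4)^\perp$. In particular $H^{0,4}(X_j)=(H^4_{tr}(X_j))^{0,4}$. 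It follows that for any morphism of Hodge structures $f\colon H^4_{tr}(X_1)\to H^4_{tr}(X_2)$ the sub--Hodge--structures $\ker f$, $\operatorname{im}f$ and $\operatorname{coker}f$ (the last identified with $(\operatorname{im}f)^\perp$ via the polarization) have $(0,4)$--parts $\ker f^{0,4}$, $\operatorname{im}f^{0,4}$ and $\operatorname{coker}f^{0,4}$; so $f$ is an isomorphism iff $f^{0,4}$ is one, and $f=0$ iff $f^{0,4}=0$. Applying this to $f=(\Psi^\prime)_\ast(\Gamma^\prime)_\ast-\operatorname{id}$ and to $f=(\Gamma^\prime)_\ast(\Psi^\prime)_\ast-\operatorname{id}$ yields (i)$\Leftrightarrow$(ii).

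For (ii)$\Leftrightarrow$(iii): the implication (iii)$\Rightarrow$(ii) is immediate by applying the Betti realization to the isomorphism $\Gamma$ with inverse $\Psi$. For (ii)$\Rightarrow$(iii), condition (ii) says precisely that $(\Psi^\prime)_\ast(\Gamma^\prime)_\ast=\operatorname{id}$ on $H^4_{tr}(X_1)$ and $(\Gamma^\prime)_\ast(\Psi^\prime)_\ast=\operatorname{id}$ on $H^4_{tr}(X_2)$, so the endomorphisms $\Psi^\prime\circ\Gamma^\prime-\Pi^{X_1}_{4,0}$ of $h_{4,0}(X_1)$ and $\Gamma^\prime\circ\Psi^\prime-\Pi^{X_2}_{4,0}$ of $h_{4,0}(X_2)$ are homologically, hence numerically, trivial. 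Since $h_{4,0}(X_j)$ is a direct summand of the finite--dimensional motive $h(X_j)$, Proposition \ref{quotientnilp} (the quotient--variety version of Kimura's nilpotence theorem \ref{nilp}) shows these endomorphisms are nilpotent; writing $\Psi^\prime\circ\Gamma^\prime=\Pi^{X_1}_{4,0}+n_1$ with $n_1^{\circ N}=0$, the correspondence $\sum_{k=0}^{N-1}(-1)^k n_1^{\circ k}$ is a two--sided inverse of $\Psi^\prime\circ\Gamma^\prime$, so $\Gamma^\prime$ is split injective; symmetrically $\Gamma^\prime$ is split surjective, hence an isomorphism $h_{4,0}(X_1)\xrightarrow{\ \sim\ }h_{4,0}(X_2)$. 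Replacing $\Psi^\prime$ by $\big(\sum_{k=0}^{N-1}(-1)^k n_1^{\circ k}\big)\circ\Psi^\prime$ and using $\Gamma^\prime\circ n_1=n_2\circ\Gamma^\prime$ with $n_2=\Gamma^\prime\circ\Psi^\prime-\Pi^{X_2}_{4,0}$, a short telescoping computation shows this corrected correspondence is a genuine two--sided inverse; it agrees with $\Psi^\prime$, hence with $\Psi$, modulo homological equivalence, which gives (iii).

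I expect the main obstacle to be the Hodge--theoretic step in (i)$\Leftrightarrow$(ii): the real content there is that the coniveau hypothesis forces $H^4_{tr}$ to be ``as small as the polarization permits'', so that the single Hodge number $h^{0,4}$ controls isomorphy for Hodge--structure maps out of it; without this hypothesis $N^1_H$ and $\wt N^1$ could differ and a map could be an isomorphism on $H^{0,4}$ without being one on $H^4_{tr}$. By contrast, the finite--dimensionality input in (ii)$\Rightarrow$(iii) is routine once the nilpotence theorem is available, the only mild care being the bookkeeping of the quasi--inverse.
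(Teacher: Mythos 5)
Your proof is correct and follows essentially the same route as the paper: the hypothesis $N^1_H H^4=\wt{N}^1 H^4$ is used exactly as in the paper's proof to promote a statement on $H^{0,4}$ to one on $H^4_{tr}$ (equivalently on $H^4/\wt{N}^1$, via the observation that a sub--Hodge--structure of $H^4_{tr}$ with trivial $(0,4)$--part must vanish), and the passage from $\MM_{\rm hom}$ to $\MM_{\rm rat}$ is the standard nilpotence argument that the paper compresses into the words ``by finite--dimensionality'' and that you spell out. The one point where you deliver slightly less than the literal wording of (\rom3) --- your corrected inverse agrees with $\Psi$ only modulo homological equivalence, rather than being $\Psi$ itself in $\MM_{\rm rat}$ --- is a feature shared with the paper's own proof and is harmless for every application of the lemma in the text.
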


  \begin{proof} Assume (\rom1), i.e.
    \[  \Psi_\ast \Gamma_\ast=\hbox{id}\colon\ \ H^{0,4}(X_1)\ \to\ H^{0,4}(X_1)\ .\]
    Using the hypothesis $N_H^1=\wt{N}^1$, this implies
    \[ \Psi_\ast \Gamma_\ast=\hbox{id}\colon\ \ H^4(X_1)/\wt{N}^1\ \to\ H^4(X_1)/\wt{N}^1\ ,\]
    and so
    \begin{equation}\label{hom}  \bigl(\Psi\circ \Gamma \circ\Pi^{X_1}_{4,0}\bigr){}_\ast  =(\Pi^{X_1}_{4,0})_\ast\colon\ \ H^\ast(X_1)\ \to\ H^\ast(X_1)\ .\end{equation}
    Considering the action on $H^4_{tr}(X_1)$, this implies
    \[ \Psi_\ast \Gamma_\ast=\hbox{id}\colon\ \ H^4_{tr}(X_1)\ \to\ H^4_{tr}(X_1)\ .\]
    Switching the roles of $X_1$ and $X_2$, one finds that likewise $\Gamma_\ast \Psi_\ast=\hbox{id}$ on $H^4_{tr}(X_2)$, and so the isomorphism of (\rom2) is proven.
    
    Next, we note that it formally follows from equality (\ref{hom}) that $\Psi$ is left--inverse to
    \[ \Gamma\colon\ \ h_{4,0}(X_1)\ \to\ h_{4,0}(X_2)\ \ \ \hbox{in}\ \MM_{\rm hom}\ .\]
    Switching roles of $X_1$ and $X_2$, one finds $\Psi$ is also right--inverse to $\Gamma$ and so
     \[ \Gamma\colon\ \ h_{4,0}(X_1)\ \to\ h_{4,0}(X_2)\ \ \ \hbox{in}\ \MM_{\rm hom}\ \]
     is an isomorphism, with inverse $\Psi$. By finite--dimensionality, the same holds in $\MM_{\rm rat}$, establishing (\rom3).
       \end{proof}
  
\begin{remark} The equality 
  \[N^1_H  H^4(X_j)= \wt{N}^1 H^4(X_j)\]
  in the hypothesis of lemma \ref{equiv} is the conjunction of the generalized Hodge conjecture $N^1_H=N^1$ and Vial's conjecture $N^1=\wt{N}^1$.
\end{remark}

\subsection{Symmetrically distinguished cycles on abelian varieties}

\begin{definition}[O'Sullivan \cite{OS}] Let $A$ be an abelian variety. Let $a\in A^\ast(A)$ be a cycle. For $m\ge 0$, let
  \[ V_m(a)\ \subset\ A^\ast(A^m) \]
  denote the $\QQ$--vector space generated by elements
  \[ p_\ast \Bigl(  (p_1)^\ast(a^{r_1})\cdot (p_2)^\ast(a^{r_2})\cdot\ldots\cdot (p_n)^\ast(a^{r_n})\Bigr)\ \ \ \in A^\ast(A^m) \ . \]
Here $n\le m$, and $r_j\in\NN$, and $p_i\colon A^n\to A$ denotes projection on the $i$--th factor, and $p\colon A^n\to A^m$ is a closed immersion with each component $A^n\to A$ being either a projection
or the composite of a projection with $[-1]\colon A\to A$.

The cycle $a\in A^\ast(A)$ is said to be {\em symmetrically distinguished\/} if for every $m\in\NN$ the composition
  \[ V_m(a)\ \subset\ A^\ast(A^m)\ \to\ A^\ast(A^m)/A^\ast_{hom}(A^m) \]
  is injective.
\end{definition}

\begin{theorem}[O'Sullivan \cite{OS}]\label{os} The symmetrically distinguished cycles form a $\QQ$--subalgebra $A^\ast_{sym}(A)\subset A^\ast(A)$, and the composition
  \[  A^\ast_{sym}(A)\ \subset\ A^\ast(A)\ \to\ A^\ast(A)/A^\ast_{hom}(A) \]
  is an isomorphism. Symmetrically distinguished cycles are stable under pushforward and pullback of homomorphisms of abelian varieties.
\end{theorem}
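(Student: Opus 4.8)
The plan is to construct, for every abelian variety $A$, the subspace $A^\ast_{sym}(A)\subset A^\ast(A)$ as a multiplicative section of the projection $A^\ast(A)\to A^\ast(A)/A^\ast_{hom}(A)$, and to check its naturality. Concretely there are four things to prove: (A) sums, scalar multiples and products of symmetrically distinguished cycles are symmetrically distinguished; (B) a symmetrically distinguished cycle which is homologically trivial is zero; (C) every class in $A^\ast(A)/A^\ast_{hom}(A)$ has a symmetrically distinguished lift; (D) $f^\ast$ and $f_\ast$ preserve symmetric distinction for every homomorphism $f$ of abelian varieties. Granting (A) and (B), the composite $A^\ast_{sym}(A)\hookrightarrow A^\ast(A)\to A^\ast(A)/A^\ast_{hom}(A)$ is a $\QQ$--algebra homomorphism with zero kernel, and (C) makes it surjective, so it is an isomorphism; (B) also gives uniqueness of symmetrically distinguished lifts, which is what makes (D) meaningful.

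Step (B) is the only easy input, and I would dispatch it first. If $a\in A^\ast_{hom}(A)$ is symmetrically distinguished, then already the case $m=1$ applies: the $\QQ$--vector space $V_1(a)\subset A^\ast(A)$ contains both $a$ (take $n=1$ and $p=\hbox{id}$, $r_1=1$) and $0$, and both are sent to $0$ in $A^\ast(A)/A^\ast_{hom}(A)$ since $a$ is homologically trivial; injectivity of $V_1(a)\to A^\ast(A)/A^\ast_{hom}(A)$ then forces $a=0$.

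The substance of the theorem — and the step I expect to be by far the hardest — is (A), which I would carry out inside the category of Chow motives of abelian varieties, using Kimura finite--dimensionality of $h(A)$ in an essential way. The ingredients are: the Beauville decomposition $A^i(A)=\bigoplus_s A^i_{(s)}(A)$ into eigenspaces of $[n]^\ast$, which is a finite multiplicative bigrading; the Künneth decomposition $h(A)=\bigoplus_i h^i(A)$ with $h^i(A)=\wedge^i h^1(A)$, the even (resp. odd) summands being evenly (resp. oddly) finite--dimensional; and the compatible $\Sigma_m$--action on $h(A^m)=h(A)^{\otimes m}$, which decomposes each $A^\ast(A^m)$ into Schur (isotypic) components. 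The key assertion is that the symmetrically distinguished condition carves out, simultaneously in all the $A^\ast(A^m)$, a subspace stable under $p^\ast$, $p_\ast$, exterior products and the $\Sigma_m$--actions. One proves this by tracking how the Schur components of $a^{\boxtimes m}\in A^\ast(A^m)$ transform under these operations, the point being that any homologically trivial correspondence assembled from such components is numerically trivial, hence nilpotent by finite--dimensionality, hence — once fed into the symmetrizing idempotents that occur in the construction — actually zero. For the product one expresses $(ab)^{\boxtimes m}$ via $a^{\boxtimes m}$ and $b^{\boxtimes m}$ pulled back along the product morphism $A^m\to A^m\times A^m$ and invokes bilinearity together with the vanishing of the homologically trivial error terms; for the sum one must expand $(a+b)^{\boxtimes m}$ binomially, which forces one to set up a two--variable refinement $V_m(a,b)$ of $V_m$ and prove the analogous stability there. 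This equivariant--nilpotence bookkeeping is where essentially all the difficulty resides.

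For the remaining points: (C) I would obtain by an inductive construction exploiting the multiplicative Beauville grading. Symmetric line bundle classes, i.e. elements of $A^1_{(0)}(A)$, are symmetrically distinguished by a direct verification in the Chow rings of the powers of $A$, hence by (A) so are all polynomials in them; an arbitrary class mod $A^\ast_{hom}$ is then corrected component by component in the Beauville grading — after subtracting an already--constructed symmetrically distinguished cycle the error jumps to strictly higher Beauville degree $s$, and since the Beauville grading of $A^i(A)$ is finite the process terminates. Finally (D): the operations appearing in the definition of $V_m$ — projections $A^n\to A$, the maps $[-1]$, closed immersions built from these, $p_\ast$ and $p^\ast$ — all commute with $f_\ast$ and $f^\ast$ up to the standard compatibilities, so $f^\ast$ carries $V_m(b)$ into $V_m(f^\ast b)$ and $f_\ast$ carries $V_m(a)$ into the corresponding $V$--space, whence $f^\ast$ and $f_\ast$ preserve symmetric distinction; that the induced maps on $A^\ast(-)/A^\ast_{hom}(-)$ are the expected ones is automatic. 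The overall shape is therefore: an easy vanishing lemma, a delicate equivariant stability theorem for which finite--dimensionality of $h(A)$ is indispensable, and then a comparatively soft existence--and--functoriality argument.
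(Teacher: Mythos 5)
This is a theorem the paper does not prove: it is quoted verbatim from O'Sullivan's article \cite{OS}, whose proof occupies the better part of an 81--page paper, so there is no in--paper argument to compare against; I can only assess your outline on its own terms. Your logical skeleton (A)--(D) is the right one, and your step (B) is correct and complete: taking $m=1$, $n=1$, $p=\mathrm{id}$, $r_1=1$ shows $a\in V_1(a)$, so a homologically trivial symmetrically distinguished cycle dies by the defining injectivity. But the two steps that carry all the content are not actually proved. In (A), the decisive inference ``homologically trivial $\Rightarrow$ numerically trivial $\Rightarrow$ nilpotent by finite--dimensionality $\Rightarrow$ zero once fed into the symmetrizing idempotents'' is not valid: composing a nilpotent correspondence with idempotents does not make it vanish, and this is precisely the trap that makes the theorem hard. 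Likewise, replacing $V_m(ab)$ and $V_m(a+b)$ by a two--variable space $V_m(a,b)$ ``and proving the analogous stability there'' merely restates the problem one level up; the injectivity of $V_m(a,b)\to A^\ast(A^m)/A^\ast_{hom}(A^m)$ is exactly as hard as what you set out to prove. O'Sullivan's actual argument runs through the structure theory of the rigid tensor subcategory of $\MM_{\rm rat}$ generated by the $h^1$ of abelian varieties (Schur--finiteness, a classification of the kernel ideal of the projection to homological motives, and a uniqueness/maximality statement for the distinguished complement), none of which is supplied by the bookkeeping you describe.

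Step (C) has a more concrete defect. First, the induction on Beauville degree is vacuous: since $[n]^\ast$ acts on $H^{2i}(A)$ by $n^{2i}$, every component $A^i_{(s)}(A)$ with $s\neq 0$ is already contained in $A^i_{hom}(A)$, so there is no ``error jumping to higher $s$'' to control -- the whole issue sits inside $A^i_{(0)}(A)$. Second, and more seriously, polynomials in symmetric divisor classes generate only the Lefschetz subalgebra of $H^\ast(A,\QQ)$, which for special abelian varieties is a proper subalgebra of the algebraic classes (e.g.\ algebraic Weil classes on certain abelian fourfolds); so correcting by such polynomials cannot hit every element of $A^\ast(A)/A^\ast_{hom}(A)$, and surjectivity fails by this route. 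Even your base case -- that symmetric divisor classes are symmetrically distinguished -- is itself a nontrivial consequence of the machinery, not a ``direct verification'': it already asserts that the subalgebra of $A^\ast(A^m)$ generated by pullbacks of symmetric divisors injects into cohomology for all $m$. Finally, in (D) the maps $V_m(b)\to V_m(f^\ast b)$ you construct go the wrong way for transferring injectivity: a homologically trivial element of $V_m(f^\ast b)$ lifts to an element of $V_m(b)$ that need not be homologically trivial, so injectivity of $V_m(b)\to A^\ast(B^m)/A^\ast_{hom}(B^m)$ gives you nothing. Functoriality, too, has to come out of the uniqueness part of the main construction rather than from formal commutation of operations.
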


\begin{remark} For discussion and applications of the notion of symmetrically distinguished cycles, in addition to \cite{OS} we refer to \cite[Section 7]{SV}, \cite{V6}, \cite{Anc}, \cite{LFu2}.
\end{remark}

\begin{lemma}\label{sym} Let $A$ be an abelian variety of dimension $g$. 

\noindent
(\rom1)
There exists an MCK decomposition $\{ \Pi_i^A\}$ that is self--dual and consists of symmetrically distinguished cycles.

\noindent(\rom2) Assume $g\le 5$, and let  $\{ \Pi_i^A\}$ be as in (\rom1). There exists a further splitting
  \[ \Pi_2^A= \Pi_{2,0}^A +\Pi_{2,1}^A\ \ \ \hbox{in}\ A^g(A\times A)\ ,\]
  where the $\Pi_{2,i}^A$ are symmetrically distinguished and $\Pi^A_{2,i}=\pi^A_{2,i}$ in $H^{2g}(A\times A)$.
\end{lemma}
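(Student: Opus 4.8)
The plan is to build everything from O'Sullivan's theorem \ref{os} together with the construction of the Deninger--Murre/Künnemann Chow--Künneth decomposition for abelian varieties. For part (\rom1), recall that on an abelian variety $A$ of dimension $g$ the Fourier transform $\mathcal F$ (induced by the Poincaré bundle on $A\times\hat A$, composed with a polarization to identify $A$ with $\hat A$) produces Künnemann's CK decomposition $\{\Pi_i^A\}$, which is known to be multiplicative: this is exactly one of the basic examples of an MCK decomposition, see \cite[Section 8]{SV}. Moreover this decomposition can be taken self-dual, i.e. ${}^t\Pi_i^A=\Pi_{2g-i}^A$. So the only new point is that the $\Pi_i^A$ can be chosen to be symmetrically distinguished. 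For this I would argue as follows: the projectors $\Pi_i^A$ are, up to the diagonal, universally expressible in terms of the class $\Delta_A\subset A\times A$ and the multiplication-by-$n$ maps $[n]$ via the eigenspace formula $\Pi_i^A=\lim$ (a polynomial in the $[n]$-pullbacks of $\Delta_A$) — concretely, the $\Pi_i^A$ are the isotypic components for the action of $[n]^\ast$ (eigenvalue $n^i$), so they are polynomials with $\QQ$-coefficients in the correspondences $\Gamma_{[n]}$ and $\Delta_A$ on $A\times A=A'$ where $A'$ is the abelian variety $A\times A$. Since $\Delta_A$ is symmetrically distinguished on $A\times A$ (it is the pushforward of the fundamental class of $A$ under the diagonal homomorphism $A\to A\times A$, hence symmetrically distinguished by Theorem \ref{os}), and symmetrically distinguished cycles are stable under pullback/pushforward by homomorphisms (in particular under $[n]\times[n]$ and the graph construction) and form a subalgebra, each $\Pi_i^A$ lies in $A^\ast_{sym}(A\times A)$. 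One has to check that the limit/projector-extraction stays inside $A^\ast_{sym}$, which is immediate because $A^\ast_{sym}(A\times A)\to A^\ast(A\times A)/A^\ast_{hom}$ is an \emph{isomorphism} (Theorem \ref{os}): the idempotent with prescribed cohomology class is unique in $A^\ast_{sym}$ and must therefore coincide with $\Pi_i^A$.

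For part (\rom2), assume $g\le 5$. By Theorem \ref{pi_2} applied to $X=A$ (which satisfies the Lefschetz standard conjecture and is finite-dimensional), there is a cohomological splitting $\pi_2^A=\pi_{2,0}^A+\pi_{2,1}^A$ in $H^{2g}(A\times A)$ with $(\pi_{2,1}^A)_\ast H^\ast=H^2(A)\cap F^1$ (the algebraic part, spanned by divisor classes) and $(\pi_{2,0}^A)_\ast H^\ast=H^2_{tr}(A)$. The idea is to lift each of these to a symmetrically distinguished idempotent. The algebraic piece is easy: $N^1H^2(A)$ is spanned by (pullbacks of) divisor classes, which are symmetrically distinguished, so one writes down an explicit symmetrically distinguished cycle $\Pi_{2,1}^A$ realizing $\pi_{2,1}^A$ — e.g. using a symmetrically distinguished orthonormal-type basis of $A^1(A)=N^1H^2(A)$ — and then sets $\Pi_{2,0}^A:=\Pi_2^A-\Pi_{2,1}^A$. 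Both lie in $A^\ast_{sym}(A\times A)$ since this is a subalgebra containing $\Pi_2^A$ (by (\rom1)). They have the right cohomology classes by construction, and they are automatically \emph{mutually orthogonal idempotents} because, again by the isomorphism $A^\ast_{sym}(A\times A)\xrightarrow{\sim}A^\ast(A\times A)/A^\ast_{hom}$ of Theorem \ref{os} (applied after verifying these relations hold cohomologically, where they do by Theorem \ref{pi_2}), idempotency and orthogonality can be checked modulo homological equivalence.

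The main obstacle, and the step deserving the most care, is establishing that the Künnemann projectors $\Pi_i^A$ themselves lie in $A^\ast_{sym}(A\times A)$ — i.e. reconciling the two different "canonical" descriptions of the CK decomposition (Fourier-theoretic versus the characterization as $[n]^\ast$-eigenprojectors) and checking that the eigenprojector description manifestly lands in the subalgebra generated by symmetrically distinguished classes. Once that is in place, every remaining assertion (self-duality, multiplicativity, the splitting of $\Pi_2^A$, idempotency and orthogonality of the pieces) reduces either to a cohomological statement already supplied by Theorems \ref{pi_2}, or to the formal fact that $A^\ast_{sym}(A^m)$ injects into $A^\ast(A^m)/A^\ast_{hom}(A^m)$, which upgrades cohomological identities among symmetrically distinguished cycles to identities modulo rational equivalence. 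I would also remark that an essentially identical argument, giving symmetrically distinguished refined CK projectors in all relevant degrees, appears in \cite[Section 7]{SV}; the statement here is just the special case $i=2$ that is needed later in the paper.
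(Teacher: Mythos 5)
Your proposal is correct and takes essentially the same route as the paper, which for (\romannumeral1) simply cites the explicit formula of \cite[Section 7 Formula (45)]{SV} (whose proof is exactly your argument that the Deninger--Murre projectors are $\QQ$--linear combinations of $(\hbox{id}\times[n])^\ast\Delta_A$, hence symmetrically distinguished), and for (\romannumeral2) uses the shape $\Pi^A_{2,1}=\sum_j C_j\times D_j$ with $D_j$ symmetric divisors and $C_j$ intersections of symmetric divisors, setting $\Pi^A_{2,0}:=\Pi^A_2-\Pi^A_{2,1}$. The one point to phrase carefully is that an arbitrary divisor class is \emph{not} symmetrically distinguished (its antisymmetric part lies in $\pic^0$), so in (\romannumeral2) one must choose symmetric representatives of the N\'eron--Severi classes, as you implicitly do and as the paper makes explicit.
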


 \begin{proof}

 \noindent
 (\rom1) An explicit formula for  $\{ \Pi_i^A\}$ is given in \cite[Section 7 Formula (45)]{SV}.

 \noindent
 (\rom2) The point is that $\Pi_{2,1}^A$ is (by construction) a cycle of type
   \[  \sum_j C_j\times D_j\ \ \ \hbox{in}\ A^g(A\times A)\ ,\]
   where $D_j\subset A$ is a symmetric divisor and $C_j\subset A$ is a curve obtained by intersecting a symmetric divisor with hyperplanes. This implies $\Pi^A_{2,1}$ is symmetrically distinguished.
   By assumption, $\Pi^A_2$ is symmetrically distinguished and hence so is $\Pi^A_{2,0}$. 
 \end{proof}

\subsection{The very special EPW sextic}
\label{secepw}

This subsection introduces the main actor of this tale: the very symmetric EPW sextic discovered in \cite{DBG}.

\begin{definition}[\cite{Bea}] A {\em hyperk\"ahler variety\/} is a simply--connected smooth projective variety $X$ such that $H^0(X,\Omega^2_X)$ is spanned by a nowhere degenerate holomorphic $2$--form.
\end{definition}

\begin{theorem}[Donten--Bury et alii \cite{DBG}]\label{epw} Let $X\subset\PP^5(\C)$ be defined by the equation
  \[  \begin{split}  x_0^6+&x_1^6+x_2^6+x_3^6+x_4^6+x_5^6 + \bigl( x_0^4x_1^2 +x_0^4x_2^2+\cdots+ x_4^2x_5^4)\\
                              &+ (x_0^2x_1^2x_2^2+x_0^2x_1^2x_3^2+\cdots+x_3^2x_4^2x_5^2) + x_0x_1x_2x_3x_4x_5=0\ .
                              \end{split}\]
                              (Note that the parentheses are symmetric functions in the variables $x_0,\ldots,x_5$.)
    
    \noindent
    (\rom1)                          
      The hypersurface $X$ is an EPW sextic (in the sense of \cite{EPW}, \cite{OG}).    
      
     \noindent
     (\rom2)
      Let $S$ be the $K3$ surface obtained from a certain Del Pezzo surface in \cite{Vin}, and let $S^{[2]}$ denote the Hilbert scheme of $2$ points on $S$. Then there is a rational map (of degree $2$)
      \[ \phi\colon\ \ S^{[2]}\ \dashrightarrow\ X\ .\] 
      There exists a commutative diagram
      \[   \begin{array}[c]{ccccccc}
           S^{[2]} &\xdashrightarrow{\rm flops} & \overline{S^{[2]}} & \xrightarrow{} & X^\prime:=E^4/(G^\prime) &\xleftarrow{}& X_0\\
                & {\scriptstyle \phi}\sebkarrow\ \ \ \ \ \ \ & &\ \ \ \ \ \ \swarrow{\scriptstyle g}&&&\\
                 && X&&&&\\
                 \end{array}\]
       Here all horizontal arrows are birational maps. $E$ is an elliptic curve and $X^\prime:=E^4/(G^\prime)$ is a quotient variety, and $X_0$ is a hyperk\"ahler variety with $b_2(X_0)=23$ which is a symplectic resolution of $X^\prime$. The morphism $g$ is a double cover; $X$ is a projective quotient variety $X=E^4/G$ where $G=(G^\prime,i)$ with $i^2\in G^\prime$. The groups $G^\prime$ and 
       $G$ consist of automorphisms that are group homomorphisms.   
       
       \noindent
       (\rom3) $S^{[2]}$ and $X_0$ 
       have finite--dimensional motive and a multiplicative CK decomposition.
              \end{theorem}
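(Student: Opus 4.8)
The plan is to treat parts (\rom1) and (\rom2) as quoted results: the identification of $X$ as an EPW sextic, the construction of Vinberg's $K3$ surface $S$ and of the birational model $\overline{S^{[2]}}$, and the presentation of $X$ and $X'$ as global quotients of $E^4$ are precisely the content of \cite{DBG} (resting in turn on \cite{Vin}, \cite{EPW}, \cite{OG}), so nothing need be added there. The substance is part (\rom3), which I would deduce from (\rom1)--(\rom2) together with results recalled above.

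First I would handle $S^{[2]}$. The $K3$ surface $S$ of \cite{Vin} has maximal Picard number $\rho(S)=20$. Hence $S$ has finite--dimensional motive by Pedrini's theorem \cite{P}, and consequently $S^{[2]}$ has finite--dimensional motive by the result of de Cataldo--Migliorini \cite{CM} expressing the motive of a Hilbert scheme of a surface through the motive of that surface. For the multiplicative Chow--K\"unneth decomposition I would simply invoke Shen--Vial \cite{SV}, who prove that $S^{[2]}$ carries an MCK decomposition for \emph{every} $K3$ surface $S$.

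Next, $X_0$. The key observation is that the diagram in (\rom2) exhibits $X_0$ as birational to $S^{[2]}$: both $\overline{S^{[2]}}$ and $X_0$ are birational to $X'=E^4/G'$, and $\overline{S^{[2]}}$ is obtained from $S^{[2]}$ by a sequence of flops, whence $S^{[2]}$ and $X_0$ are birational. Since these are both hyperk\"ahler fourfolds, Rie\ss's theorem \cite{Rie} yields an isomorphism of Chow motives $h(X_0)\cong h(S^{[2]})$ in $\MM_{\rm rat}$; in particular $X_0$ has finite--dimensional motive. And by Lemma \ref{hk}, $X_0$ inherits a multiplicative Chow--K\"unneth decomposition from $S^{[2]}$. (Alternatively, for the finite--dimensionality of $X_0$ one could argue directly: $X'=E^4/G'$ is a quotient of a product of elliptic curves, so $h(X')=h(E^4)^{G'}$ is finite--dimensional, and a de Cataldo--Migliorini style motivic decomposition of the semismall resolution $X_0\to X'$ then expresses $h(X_0)$ through $h(E^4)$ and motives of blow--ups of abelian subvarieties of $E^4$, all finite--dimensional.)

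I do not expect a serious obstacle here: the geometric heavy lifting --- producing the chain of birational maps relating $S^{[2]}$, $X'$ and $X_0$ --- is already carried out in \cite{DBG}, and the remaining deductions are formal. The one point requiring a little care is to apply Rie\ss's theorem and Lemma \ref{hk} only to genuine hyperk\"ahler varieties: the intermediate space $\overline{S^{[2]}}$ is hyperk\"ahler (being a flop of $S^{[2]}$), whereas $X'$ is singular and serves merely as a birational bridge, so that what is ultimately used is the birational equivalence of the two smooth hyperk\"ahler fourfolds $S^{[2]}$ and $X_0$.
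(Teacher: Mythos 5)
Your proposal is correct and follows essentially the same route as the paper: parts (\rom1) and (\rom2) are quoted from \cite{DBG}, finite--dimensionality of $S^{[2]}$ comes from the $\rho=20$ property of Vinberg's surface via \cite{P} and \cite{CM}, the MCK decomposition of $S^{[2]}$ from \cite[Theorem 13.4]{SV}, and both properties are transported to $X_0$ through the birational equivalence of hyperk\"ahler fourfolds using Rie\ss's theorem and Lemma \ref{hk}. Your parenthetical alternative for the finite--dimensionality of $X_0$ via a de Cataldo--Migliorini style decomposition of the semismall resolution $X_0\to X'$ is a reasonable extra remark but is not needed and is not in the paper.
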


\begin{proof}

\noindent
(\rom1) \cite[Proposition 2.6]{DBG}.

\noindent
(\rom2) This is a combination of \cite[Proposition 1.1]{DBG} and \cite[Sections 5 and 6]{DBG}. (Caveat: the group that we denote $G^\prime$ is written $G$ in \cite{DBG}.)

\noindent
(\rom3) Vinberg's $K3$ surface has Picard number $20$; as such, it is a Kummer surface and has finite--dimensional motive. This implies (using \cite{CM}) that $S^{[2]}$ has finite--dimensional motive. As birational hyperk\"ahler varieties have isomorphic Chow motives \cite{Rie}, $X_0$ has finite--dimensional motive. The Hilbert scheme $S^{[2]}$ of any $K3$ surface $S$ has an MCK decomposition \cite[Theorem 13.4]{SV}. As the isomorphism of \cite{Rie} is an isomorphism of algebras in the category of Chow motives, $X_0$ also has an MCK decomposition (lemma \ref{hk}).
\end{proof}

\begin{remark} The singular locus of the very special EPW sextic $X$ consists of $60$ planes. Among these 60 planes, there is a subset of 20 planes which form a complete family of pairwise incident planes in $\PP^5(\C)$ \cite{DBG}. This is the maximal number of elements in a complete family of pairwise incident planes, and this seems to be the only known example of a complete family of 20 pairwise incident planes.
\end{remark}

\begin{remark} The variety $X_0$ is not unique. In \cite[Section 6]{DBW}, it is shown there exist $81^{16}$ symplectic resolutions of $E^4/(G^\prime)$ (some of them non--projective).
One noteworthy consequence of theorem \ref{epw} is that the varieties $X_0$ are of $K3^{[2]}$ type (this was not a priori clear from \cite{DBW}).
\end{remark}

\begin{remark} For a {\em generic\/} EPW sextic $X$, there exists a hyperk\"ahler fourfold $X_0$ (called a ``double EPW sextic'') equipped with an anti--symplectic involution $\sigma_0$ such that $X=X_0/(\sigma_0)$ \cite[Theorem 1.1 (2)]{OG}. For the very special EPW sextic $X$, I don't know whether such $X_0$ exists. (For this, one would need to show that the Lagrangian subspace $A$ defining the very special EPW sextic is in the Zariski open $\LL\GG(\wedge^3 V)^0\subset\LL\GG(\wedge^3 V)$ defined in \cite[page 3]{OG}.)
\end{remark}

\section{Some intermediate steps}

\subsection{A strong version of the generalized Hodge conjecture}

For later use, we record here a proposition, stating that the very special EPW sextic, as well as some related varieties, satisfy the hypothesis of lemma \ref{equiv}:

\begin{proposition}\label{ghc} Let $X_0$ be any hyperk\"ahler variety as in theorem \ref{epw} (i.e., $X_0$ is a symplectic resolution of $E^4/(G^\prime)$). Then
  \[ N^1_H H^4(X_0)= \wt{N}^1 H^4(X_0)\ .\]
 (Here $N_H^\ast$ denotes the Hodge coniveau filtration and $\wt{N}^\ast$ denotes the niveau filtration (definition \ref{niv}).)
 
 The same holds for $X^\prime:=E^4/(G^\prime)$ and for the very special EPW sextic $X$: 
 \[ \begin{split} N^1_H H^4(X^\prime)&= \wt{N}^1 H^4(X^\prime)\ ,\\
                        N^1_H H^4(X)&= \wt{N}^1 H^4(X)\ .\\
                   \end{split}   \] 
 \end{proposition}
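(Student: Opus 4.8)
The plan is to reduce all three assertions to a single computation for the Hilbert scheme $S^{[2]}$ of Vinberg's $K3$ surface, and to carry out that computation by exploiting the maximality of the Picard number of $S$. For any smooth projective (or quotient) variety $Y$ one always has $\wt{N}^1 H^4(Y)\subseteq N^1 H^4(Y)\subseteq N^1_H H^4(Y)$ — the first inclusion is remark \ref{is}, the second is standard mixed Hodge theory — so in each case it suffices to prove the reverse inclusion $N^1_H H^4\subseteq\wt{N}^1 H^4$. Next, $H^4$ of each of $X_0$, $X^\prime$, $X$ is a direct summand of $H^4(S^{[2]})$ cut out by an algebraic projector: $X_0$ is birational to $S^{[2]}$ (theorem \ref{epw}), so $h(X_0)\cong h(S^{[2]})$ in $\MM_{\rm rat}$ by \cite{Rie}; the symplectic resolution $X_0\to X^\prime$ gives a split injection $H^4(X^\prime)\hookrightarrow H^4(X_0)$ since $X^\prime$ has rational singularities; and the degree-$2$ rational map $\phi$ of theorem \ref{epw} gives a split injection $H^4(X)\hookrightarrow H^4(S^{[2]})$ via the correspondence $\Gamma_\phi$. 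Both $\wt{N}^1$ and $N^1_H$ pass to such motivic summands (for $N^1_H$: a sub-Hodge-structure of a summand lying in $F^1$ a fortiori lies in $F^1$ of the ambient space; for $\wt{N}^1$: precompose a given surface-correspondence with the projector). Hence everything reduces to showing $N^1_H H^4(S^{[2]})\subseteq\wt{N}^1 H^4(S^{[2]})$.

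For the computation on $S^{[2]}$, I would use the description $S^{[2]}=\bigl(\operatorname{Bl}_{\Delta}(S\times S)\bigr)/\mathfrak{S}_2$ together with $H^1(S)=H^3(S)=0$ to get a decomposition of Hodge structures
\[ H^4(S^{[2]})=\operatorname{Sym}^2 H^2(S)\ \oplus\ H^2(S)(-1)\ \oplus\ H^4(S)\ ,\]
in which $H^2(S)(-1)$ is the image of $H^2(S)$ under the exceptional-divisor correspondence from $S$ (hence $\subseteq\wt{N}^1$) and $H^4(S)$ is Tate (hence $\subseteq\wt{N}^2\subseteq\wt{N}^1$). Writing $H^2(S)=\operatorname{NS}(S)_{\QQ}\oplus T$ with $T:=H^2_{tr}(S)$, and using that $S$ has Picard number $20$ so that $T$ has rank $2$ and is concentrated in bidegrees $(2,0)$ and $(0,2)$, one has
\[ \operatorname{Sym}^2 H^2(S)=\operatorname{Sym}^2\operatorname{NS}(S)_{\QQ}\ \oplus\ \bigl(\operatorname{NS}(S)_{\QQ}\otimes T\bigr)\ \oplus\ \operatorname{Sym}^2 T\ ,\]
where $\operatorname{Sym}^2\operatorname{NS}(S)_{\QQ}$ is Tate ($\subseteq\wt{N}^2$), $\operatorname{NS}(S)_{\QQ}\otimes T\cong T(-1)^{\oplus 20}$ is concentrated in bidegrees $(3,1),(1,3)$ and is the image of $T$ under "cup with a divisor class" correspondences from $S$ ($\subseteq\wt{N}^1$), and $\operatorname{Sym}^2 T$ has Hodge numbers $h^{4,0}=h^{2,2}=h^{0,4}=1$. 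A short linear-algebra computation — using that $\operatorname{End}_{HS}(T)$ is an imaginary quadratic field, as for any singular $K3$ surface — gives $\operatorname{Sym}^2 T\cong V\oplus\QQ(-2)$ with $V$ concentrated in bidegrees $(4,0),(0,4)$. Since $N^1_H$ is additive over direct sums of Hodge structures and $N^1_H(V)=0$ (a nonzero sub-Hodge-structure of $V$ would contain a $(4,0)$-class together with its conjugate $(0,4)$-class, and the latter does not lie in $F^1$), I conclude that $N^1_H H^4(S^{[2]})$ is, modulo the part already shown to lie in $\wt{N}^1$, spanned by the single class $\tau$ generating the $\QQ(-2)$-summand of $\operatorname{Sym}^2 T$.

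It then remains to show that $\tau\in\operatorname{Sym}^2 H^2_{tr}(S)\subseteq H^4(S^{[2]})$ is algebraic; being a rational $(2,2)$-class on a fourfold it is then supported in codimension $2$, hence lies in $\wt{N}^2 H^4(S^{[2]})\subseteq\wt{N}^1 H^4(S^{[2]})$, which finishes the proof. For this I would invoke that $S$, having Picard number $20$, is linked by a Shioda--Inose correspondence to the Kummer surface of an abelian surface isogenous to $E^2$, with $E$ the CM elliptic curve $\C/\ZZ[\zeta_3]$; transporting $\tau$ along this correspondence and along the correspondence between $S\times S$ and $S^{[2]}$ turns it into a rational $(2,2)$-class on a power of $E$, which is algebraic by the Hodge conjecture for $E^4$, and pushing it forward again exhibits $\tau$ as algebraic on $S^{[2]}$. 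I expect this last step to be the main obstacle: it is essentially the Hodge conjecture for $S\times S$, and it is precisely here that maximality of the Picard number is indispensable — were $\operatorname{NS}(S)$ of smaller rank, $T$ would acquire a $(1,1)$-part, $\operatorname{Sym}^2 T$ a $(3,1)$-part, and $N^1_H H^4(S^{[2]})$ would then contain a genuinely non-algebraic Hodge structure of coniveau $1$, whose membership in $\wt{N}^1$ would amount to the full generalized Hodge conjecture for $S^{[2]}$.
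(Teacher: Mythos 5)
Your proposal is correct and follows essentially the same route as the paper: reduce all three statements to the Hilbert scheme of Vinberg's $K3$ surface via algebraic correspondences, split $H^4$ using $H^2(S)=NS(S)_{\QQ}\oplus T$, observe that $\rho$-maximality forces $(T\otimes T)\cap F^1=(T\otimes T)\cap F^2$ to consist of Hodge classes, and dispose of those by the Hodge conjecture for $S\times S$. The only (harmless) difference is that the paper works with the injection $H^4(S^{[2]})\hookrightarrow H^4(S\times S)\oplus H^2(S)$ and cites the known Hodge conjecture for powers of abelian surfaces wholesale ($S$ being a Kummer surface), whereas you isolate the single class $\tau$ in $\operatorname{Sym}^2 T$ and route its algebraicity through a Shioda--Inose correspondence.
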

 
\begin{proof} The point is that Vinberg's $K3$ surface $S$ has Picard number $20$, and so the corresponding statement is easily proven for $S^{[2]}$:

\begin{lemma}\label{rho} Let $S$ be a smooth projective surface with $q=0$ and $p_g(S)=1$. Assume $S$ is $\rho$--maximal (i.e. $\dim H^2_{tr}(S)=2$). Then
   \[ N^1_H H^4(S^{[2]}) = \wt{N}^1 H^4(S^{[2]})\ .\]
\end{lemma}

\begin{proof} Let $\widetilde{S\times S}\to S\times S$ denote the blow--up of the diagonal. As is well--known,
there are isomorphisms of homological motives
 \[ \begin{split} 
       h(S^{[2]}) &\cong h(\wt{S\times S})^{\Sy_2}\ ,\\
       h(\wt{S\times S}) &\cong h(S\times S)\oplus h(S)(1)\ \ \ \hbox{in}\ \MM_{\rm hom}\ ,\\
       \end{split}\]
 where $\Sy_2$ denotes the symmetric group on $2$ elements acting by permutation. It follows there is a correspondence--induced injection
  \[ H^4(S^{[2]})\ \hookrightarrow\ H^4(S\times S)\oplus H^2(S)\ .\]
  It thus suffices to prove the statement for $S\times S$. Let us write
   \[ H^2(S)=N\oplus T:=NS(S)\oplus H^2_{tr}(S)\ .\]
   We have
   \[ \begin{split}  N^1_H H^4(S\times S)&= H^4(S\times S)\cap F^1\\
     &= H^0(S)\otimes H^4(S)\oplus H^4(S)\otimes H^0(S)\oplus N\otimes N\oplus N\otimes T\oplus T\otimes N\\
           &\ \ \ \ \ \ \ \ \ \  \ \          \oplus (T\otimes T)\cap F^1\ .\\
     \end{split}\]
   All but the last summand are obviously in $\wt{N}^1$. 
   As to the last summand, we have that
   \[ (T\otimes T)\cap F^1=(T\otimes T)\cap F^2 \ . \]
 Since the Hodge conjecture is true for $S\times S$ (indeed, $S$ is a Kummer surface and the Hodge conjecture is known for powers of abelian surfaces \cite[7.2.2]{Ab}, \cite[8.1(2)]{Ab2}),
 there is an inclusion
  \[ (T\otimes T)\cap F^2 \ \subset\ N^2 H^4(S\times S)= \wt{N}^2 H^4(S\times S)\ ,\]
  and so the lemma is proven.

    \end{proof}

Since birational hyperk\"ahler varieties have isomorphic cohomology rings \cite[Corollary 2.7]{Huy}, and the isomorphism (being given by a correspondence) respects Hodge structures, 
this proves the result for $X_0$. Since $X_0$ dominates $X^\prime$ and $X$, the result for $X^\prime$ and $X$ follows. Proposition \ref{ghc} is now proven.
\end{proof}

\subsection{MCK for quotients of abelian varieties}

\begin{proposition}\label{mck} Let $A$ be an abelian variety of dimension $n$, and let $G\subset\hbox{Aut}_{\ZZ}(A)$ be a finite group of automorphisms of $A$ that are group homomorphisms. The quotient
  \[ X=A/G \]
  has a self--dual MCK decomposition.
  \end{proposition}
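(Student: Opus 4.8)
The plan is to reduce the statement for $X = A/G$ to the known MCK decomposition on the abelian variety $A$, using the fact that $h(X) \cong h(A)^G$ in $\MM_{\rm rat}$ together with the explicit symmetrically distinguished MCK decomposition $\{\Pi_i^A\}$ supplied by lemma \ref{sym}(\rom1). First I would recall that, by lemma \ref{sym}(\rom1), there is an MCK decomposition $\{\Pi_i^A\}$ of $A$ which is self-dual and consists of symmetrically distinguished cycles. The key observation is that the projectors $\Pi_i^A$ are $G \times G$-invariant: indeed, for $g \in G$ (a group automorphism of $A$), the graph $\Gamma_g$ commutes with each $\Pi_i^A$ because $\Pi_i^A$ is symmetrically distinguished and $g$ is a homomorphism, so $g^\ast \Pi_i^A = \Pi_i^A$ by the stability of symmetrically distinguished cycles under pullback by homomorphisms (theorem \ref{os}), combined with the fact that two symmetrically distinguished cycles equal in cohomology are equal in the Chow group (and $g^\ast$ acts trivially on $H^\ast(A\times A)$ on the relevant Künneth piece — one needs a small argument here, or alternatively one invokes that the Shen–Vial formula (45) is manifestly $\hbox{Aut}_{\ZZ}(A)$-equivariant). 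Consequently $\Delta_A^G \circ \Pi_i^A = \Pi_i^A \circ \Delta_A^G = \Pi_i^A \circ \Delta_A^G \circ \Pi_i^A$ commutes appropriately, and one sets
\[ \Pi_i^X := \Delta_A^G \circ \Pi_i^A \circ \Delta_A^G = \Pi_i^A \circ \Delta_A^G \in A^n(A\times A)\ , \]
viewed as a self-correspondence of $h(X) = h(A)^G$.

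Next I would check the three required properties. Orthogonality and idempotence: since the $\Pi_i^A$ are mutually orthogonal idempotents and each commutes with $\Delta_A^G$, the products $\Pi_i^A \circ \Delta_A^G$ are again mutually orthogonal idempotents, and they sum to $\Delta_A^G = \Delta_X$ (the diagonal of $X$ under the identification $h(X) = h(A)^G$). The Künneth property $(\Pi_i^X)_\ast H^\ast(X) = H^i(X)$: this follows because $H^\ast(X) = H^\ast(A)^G \subset H^\ast(A)$ and $\Pi_i^A$ cuts out $H^i(A)$, so $\Pi_i^X$ cuts out $H^i(A)^G = H^i(X)$. Self-duality: the transpose ${}^t \Pi_i^X = {}^t(\Delta_A^G) \circ {}^t \Pi_i^A \circ {}^t(\Delta_A^G) = \Delta_A^G \circ \Pi_{2n-i}^A \circ \Delta_A^G = \Pi_{2n-i}^X$, using that $\{\Pi_i^A\}$ is self-dual and $\Delta_A^G$ is symmetric (the sum over $G$ of graphs of homomorphisms, whose transposes are graphs of inverse homomorphisms, again an element of $G$).

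For multiplicativity, the point is that the small diagonal $\Delta_X^{sm}$ of $X = A/G$ is the image of the small diagonal $\Delta_A^{sm}$ under the triple quotient map, i.e. (as a correspondence) $\Delta_X^{sm} = (\Delta_A^G \times \Delta_A^G \times \Delta_A^G)$-conjugate of $\Delta_A^{sm}$, up to the normalizing constant $|G|$. Since $\{\Pi_i^A\}$ is an MCK decomposition of $A$, we have $\Pi_k^A \circ \Delta_A^{sm} \circ (\Pi_i^A \times \Pi_j^A) = 0$ for $i+j \neq k$; conjugating everything by $\Delta_A^G$ and using that $\Delta_A^G$ commutes with all the $\Pi^A_\bullet$, the same vanishing descends to
\[ \Pi_k^X \circ \Delta_X^{sm} \circ (\Pi_i^X \times \Pi_j^X) = 0 \quad \hbox{in}\ A^{2n}(X\times X\times X)\ \hbox{for}\ i+j\neq k\ . \]
The main obstacle I anticipate is the equivariance claim $g^\ast \Pi_i^A = \Pi_i^A$ — one must verify carefully that the Shen–Vial projectors from formula (45) are genuinely invariant under all of $\hbox{Aut}_{\ZZ}(A)$, not merely under translations and $[-1]$; this should follow either from inspecting the formula (which is built from the Fourier–Mukai/Beauville decomposition, which is functorial for homomorphisms) or from the symmetric-distinguishedness together with the injectivity of $A^\ast_{sym} \to A^\ast/A^\ast_{hom}$, since a homomorphism acts on $H^{2n}(A\times A)$ preserving each Künneth summand $H^i \otimes H^{2n-i}$ — but pinning down that $g^\ast$ fixes the class $\pi_i^A$ in cohomology (as opposed to merely preserving the summand) requires knowing $\pi_i^A$ is the canonical Künneth projector, which it is. Modulo this bookkeeping, the descent argument is routine.
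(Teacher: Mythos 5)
Your proposal is correct and follows essentially the same route as the paper: descend the symmetrically distinguished Shen--Vial MCK decomposition of $A$ through the projector $\Delta_A^G$ (equivalently, through $\Gamma_p$ and ${}^t\Gamma_p$), using the descent of the small diagonal for multiplicativity. The equivariance point you flag as the main obstacle is resolved exactly as you suggest and as the paper does: $\Gamma_g\circ\Pi_i^A-\Pi_i^A\circ\Gamma_g$ is homologically trivial because $g_\ast$ preserves $H^i(A)$ and the $\Pi_i^A$ lift the canonical K\"unneth projectors, and it is symmetrically distinguished, hence vanishes by O'Sullivan's theorem.
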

  
  \begin{proof} A first step is to show there exists a self--dual CK decomposition for $X$ induced by a CK decomposition on $A$:

  \begin{claim}\label{sym1} Let $A$ and $X$ be as in proposition \ref{mck}, and let $p\colon A\to X$ denote the quotient morphism. Let $\{\Pi_i^A\}$ be a CK decomposition as in lemma \ref{sym}(\rom1). Then
  \[ \Pi_i^X:= {1\over d} \Gamma_p\circ \Pi_i^A\circ {}^t \Gamma_p\ \ \ \in A^n(X\times X)\ ,\ \ \ i=0,\ldots,2n \]
  defines a self--dual CK decomposition for $X$.
\end{claim}

To prove the claim, we remark that clearly the given $\Pi_i^X$ lift the K\"unneth components of $X$, and their sum is the diagonal of $X$. 
We will make use of the following property:

\begin{lemma}\label{comm} Let $A$ be an abelian variety of dimension $n$, and let $\{\Pi_i^A\}$ be an MCK decomposition as in lemma \ref{sym}(\rom1). For any $g\in\hbox{Aut}_{\ZZ}(A)$, we have
  \[ \Pi_i^A\circ \Gamma_g = \Gamma_g\circ \Pi_i^A\ \ \ \hbox{in}\ A^n(A\times A)\ .\]
  \end{lemma}
  
 \begin{proof} Because $g_\ast H^i(A)\subset H^i(A)$, we have a homological equivalence
   \[ \Pi_i^A\circ \Gamma_g - \Gamma_g\circ \Pi_i^A=0\ \ \ \hbox{in}\ H^{2n}(A\times A)\ .\] 
   But the left--hand side is a symmetrically distinguished cycle, and so it is rationally trivial.
 \end{proof}

To see that $\Pi_i^X$ is idempotent, we note that
  \[ \begin{split} \Pi_i^X\circ \Pi_i^X &= {1\over d^2} \Gamma_p\circ \Pi_i^A\circ {}^t \Gamma_p \circ \Gamma_p\circ \Pi_i^A\circ {}^t \Gamma_p\\
                                                     &= {1\over d} \Gamma_p\circ \Pi_i^A\circ \bigl(\sum_{g\in G} \Gamma_g\bigr)\circ \Pi_i^A\circ {}^t \Gamma_p\\
                                                     &= {1\over d} \Gamma_p\circ \Pi_i^A\circ \Pi_i^A\circ \bigl(\sum_{g\in G} \Gamma_g\bigr)\circ {}^t \Gamma_p\\
                                                     &= {1\over d} \Gamma_p\circ \Pi_i^A \circ \bigl(\sum_{g\in G} \Gamma_g\bigr)\circ {}^t \Gamma_p\\
                                                     &={1\over d} \Gamma_p\circ \Pi_i^A \circ  {}^t \Gamma_p \circ \Gamma_p \circ {}^t \Gamma_p\\
                                                     &={1\over d} \Gamma_p\circ \Pi_i^A \circ  {}^t \Gamma_p \circ d\Delta_X\\
                                                          &=  \Gamma_p\circ \Pi_i^A \circ  {}^t \Gamma_p = \Pi_i^X\ \ \ \hbox{in}\ A^n(X\times X)\ .\\
                                              \end{split}\]
                                              (Here, the third equality is an application of lemma \ref{comm}, and the fourth equality is because $\Pi_i^A$ is idempotent.)    
                  The fact that the $\Pi_i^X$ are mutually orthogonal is proven similarly; one needs to replace $\Pi_i^X\circ \Pi_i^X$ by $\Pi_i^X\circ \Pi_j^X$ in the above argument. This proves claim \ref{sym1}.

  Now, it only remains to see that the CK decomposition $\{\Pi_i^X\}$ of claim \ref{sym1} is multiplicative.
  
  \begin{claim}\label{mult} The CK decomposition $\{\Pi_i^X\}$ given by claim \ref{sym1} is an MCK decomposition. 
  \end{claim}
  
  To prove claim \ref{mult}, let us consider the composition
   \[ \Pi^X_k\circ \Delta^X_{sm}\circ (\Pi^X_i\times \Pi^X_j)\ \ \ \in\ A^n(X\times X)\ ,\]
   where we suppose $i+j\not= k$.
   There are equalities
   \[    \begin{split}    
      \Pi^X_k\circ \Delta^X_{sm}\circ (\Pi^X_i\times \Pi^X_j)  &= {1\over d^3}\  \Gamma_p\circ \Pi^A_k\circ {}^t \Gamma_p\circ \Delta^X_{sm}\circ \Gamma_{p\times p}\circ (\Pi^A_i\times \Pi^A_j)\circ {}^t \Gamma_{p\times p}\\
                          &={1\over d}\ \Gamma_p\circ \Pi^A_k\circ \Delta^G_A\circ \Delta^A_{sm}\circ (\Delta^G_A\times \Delta^G_A)\circ   (\Pi^A_i\times \Pi^A_j)\circ {}^t \Gamma_{p\times p}\\
                       & = {1\over d}\ \Gamma_p\circ \Delta^G_A\circ \Pi^A_k\circ \Delta^A_{sm}\circ   (\Pi^A_i\times \Pi^A_j)\circ (\Delta^G_A\times \Delta^G_A)\circ {}^t \Gamma_{p\times p}\\
                       & =0\ \ \ \hbox{in}\ A^{2n}(X\times X\times X)\ .\\
                       \end{split}\]
                 Here, the first equality is by definition of the $\Pi^X_i$, the second equality is lemma \ref{sm} below, the third equality follows from lemma \ref{comm}, and the fourth equality is the fact that $\{\Pi^A_i\}$ is an MCK decomposition for $A$ (lemma \ref{sym}).      
        
     \begin{lemma}\label{sm} There is equality
     \[ \begin{split} {}^t \Gamma_p\circ \Delta^X_{sm}\circ \Gamma_{p\times p}&={1\over d} (\sum_{g\in G} \Gamma_g)\circ \Delta^A_{sm}\circ \bigl((\sum_{g\in G} \Gamma_g)\times 
         (\sum_{g\in G}           \Gamma_g)\bigr) \\  
         &= d^2 \Delta^G_A\circ \Delta^A_{sm}\circ (\Delta^G_A\times \Delta^G_A)\ \ \ \hbox{in}\ A^{2n}(A\times A\times A)\ .\\
         \end{split}\]                    
     \end{lemma}
                                                
                                 \begin{proof} The second equality is just the definition of $\Delta^G_A$. As to the first equality, we first note that
                                 \[  \Delta^X_{sm}  ={1\over d}(p\times p\times p)_\ast (\Delta^A_{sm}) = {1\over d}\Gamma_p\circ \Delta^A_{sm}\circ {}^t \Gamma_{p\times p}\ \ \ \hbox{in}\ A^{2n}(X\times X\times X)\ .\]
                 This implies that
                 \[ {}^t \Gamma_p\circ \Delta^X_{sm}\circ \Gamma_{p\times p}= {1\over d}\ {}^t \Gamma_p\circ \Gamma_p\circ \Delta^A_{sm}\circ {}^t \Gamma_{p\times p}\circ \Gamma_{p\times p}\ .\]
                 But ${}^t \Gamma_p\circ \Gamma_p=\sum_{g\in G} \Gamma_g$, and thus
                 \[ {}^t \Gamma_p\circ \Delta^X_{sm}\circ \Gamma_{p\times p}={1\over d}  (\sum_{g\in G} \Gamma_g)\circ \Delta^A_{sm}\circ \bigl((\sum_{g\in G} \Gamma_g)\times 
         (\sum_{g\in G}           \Gamma_g)\bigr) \ \ \ \hbox{in}\ A^{2n}(A\times A\times A)\ ,\]
         as claimed. 
  \end{proof}
    This ends the proof of proposition \ref{mck}.
  \end{proof}
  
  In the set--up of proposition \ref{mck}, one can actually say more about certain pieces $A^i_{(j)}(X)$:
  
 \begin{proposition}\label{2} Let $X=A/G$ be as in proposition \ref{mck}. Assume $n=\dim X\le 5$ and $H^{2}(X,\OO_X)=0$. Assume also 
 there exists $X^\prime=A/(G^\prime)$ where
 $G=(G^\prime,i)$ with $i^2\in G^\prime$, and the action of $i$ on $H^2(X^\prime,\OO_{X^\prime})$ is minus the identity.
  Then any CK decomposition $\{\Pi_i\}$ of $X$
 verifies
    \[  \begin{split} &(\Pi_2)_\ast A^j(X)=0\ \ \ \hbox{for\ all\ }j\not=1\ ,\\
                             &(\Pi_6)_\ast A^j(X)=0\ \ \ \hbox{for\ all\ }j\not=3\ .
                             \end{split}   \]
   \end{proposition}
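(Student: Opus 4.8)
The plan is to reduce the statement to identifying the Chow motives $h^2(X)$ and $h^6(X)$ of $X$, and then to use finite--dimensionality to see these are sums of Tate motives. First I would record the hypotheses that make Vial's refined Chow--K\"unneth formalism available: since $h(X)=h(A)^G$ is a direct summand of $h(A)$ it is finite--dimensional, and the Lefschetz standard conjecture $B(X)$ holds (it holds for $A$, hence for its summand). As $\dim X\le 5$, theorems \ref{pi_2} and \ref{Pi_2} apply to $X$ — the arguments of \cite{V4} go through verbatim for projective quotient varieties using proposition \ref{quot} and Lieberman's lemma, exactly as in proposition \ref{quotientnilp}. Moreover, for \emph{any} CK decomposition $\{\Pi_i\}$ the Chow motive $h^i(X):=(X,\Pi_i,0)$ is independent of the choice up to isomorphism: if $\{\Pi_i^\prime\}$ is another one, then $\Pi_i-\Pi_i^\prime$ is homologically trivial and $\Pi_i^\prime\circ\Pi_i$ is an isomorphism $h^i(X)\to h^i(X)^\prime$ (its inverse is $\Pi_i\circ\Pi_i^\prime$, using proposition \ref{quotientnilp} to invert a correspondence that is the identity modulo a nilpotent). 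So it suffices to compute $(\Pi_2)_\ast A^j(X)$ and $(\Pi_6)_\ast A^j(X)$ for the decomposition descended from a symmetrically distinguished MCK decomposition of $A$ as in proposition \ref{mck} and lemma \ref{sym}.

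For $\Pi_2$: the hypotheses force $H^2(X,\OO_X)=0$, since $X=X^\prime/(i)$ gives $H^2(X,\OO_X)=H^2(X^\prime,\OO_{X^\prime})^{i}$, on which $i$ acts by $-\hbox{id}$. As $X$ is a quotient variety, $H^2(X,\QQ)$ is pure with $h^{2,0}=h^{0,2}=0$, so $H^2(X)$ is purely of type $(1,1)$; by the Lefschetz $(1,1)$ theorem — valid here because $NS(X)_\QQ=NS(A)^G_\QQ=\bigl(H^{1,1}(A)\cap H^2(A,\QQ)\bigr)^G$ — it is spanned by divisor classes, i.e. $H^2_{tr}(X)=0$. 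Hence $h_{2,0}(X)$ is homologically trivial, and finite--dimensionality forces $h_{2,0}(X)=0$; equivalently the descended piece $\Pi_{2,0}^X=\tfrac1d\Gamma_p\Pi_{2,0}^A\,{}^t\Gamma_p$ is a homologically trivial idempotent, hence $0$. So $\Pi_2=\Pi_{2,1}$, and $h^2(X)$ has Hodge--Tate realization $\QQ(-1)^{\oplus\rho}$ with $\rho=\dim H^2(X)$. Since the cycle class map $(\Pi_2)_\ast A^1(X)\to H^2(X)$ is onto (Lefschetz $(1,1)$) and a homological isomorphism between finite--dimensional motives is an isomorphism in $\MM_{\rm rat}$, one gets $h^2(X)\cong\LL^{\oplus\rho}$. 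As $A^j(\LL)=(\pi_2^{\PP^1})_\ast A^j(\PP^1)$ vanishes for $j\ne 1$, this yields $(\Pi_2)_\ast A^j(X)=0$ for all $j\ne 1$. (Concretely, by lemma \ref{sym}(\rom2) one has $\Pi_{2,1}^X=\tfrac1d\sum_j p_\ast(C_j)\times p_\ast(D_j)$, supported on $(\text{curve})\times(\text{divisor})$, and such a correspondence kills $A^j(X)$ for $j\ne 1$ by a push--pull computation.)

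For $\Pi_6$ I would argue identically with $H^6(X)$ in the case $\dim X=4$ relevant to the EPW sextic. By hard Lefschetz (which holds for $X$ since $H^\ast(X)=H^\ast(A)^G$ and the $sl_2$--action commutes with $G$), cup product with $h^2$ gives an isomorphism $H^2(X)\xrightarrow{\sim}H^6(X)$, so $H^6(X)$ is purely of type $(3,3)$ and spanned by algebraic classes (cup products of $h^2$ with divisors). As before, finite--dimensionality promotes this to $h^6(X)\cong(\LL^{3})^{\oplus b_6}$, and since $A^j(\LL^3)=(\pi_6^{\PP^3})_\ast A^j(\PP^3)$ vanishes for $j\ne 3$, we conclude $(\Pi_6)_\ast A^j(X)=0$ for all $j\ne 3$. (In the descended picture: $\Pi_6^X={}^t\Pi_2^X$ is supported on $(\text{divisor})\times(\text{curve})$ modulo its transcendental piece $\Pi_{6,2}^X$, which vanishes by finite--dimensionality once $H^2_{tr}(X)=0$.)

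The delicate point is not any single step but the passage from cohomological to Chow--theoretic vanishing: upgrading ``$H^2_{tr}(X)=0$'', resp. ``$H^6(X)$ is Hodge--Tate'', to ``$h^2(X)$'', resp. ``$h^6(X)$'', ``is a sum of Tate motives in $\MM_{\rm rat}$''. This is exactly where finite--dimensionality of $h(X)$ (Kimura's nilpotence, proposition \ref{quotientnilp}) is indispensable, and where O'Sullivan's symmetrically distinguished cycles do the bookkeeping in the descended approach. The hypothesis on $X^\prime$ and $i$ enters only to guarantee $H^2(X,\OO_X)=0$: conceptually it makes the indecomposable transcendental motive $h_{2,0}(X^\prime)$ of lemma \ref{indecomp} anti--invariant under $i$, so that $h_{2,0}(X)$, its $i$--invariant part, is zero.
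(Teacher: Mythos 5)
Your argument is correct and runs on the same engine as the paper's: split $\Pi_2=\Pi_{2,0}+\Pi_{2,1}$ following Vial, kill the transcendental idempotent $\Pi_{2,0}$ because it is a homologically trivial idempotent on a finite--dimensional motive (proposition \ref{quotientnilp}), and note that the surviving piece $\Pi_{2,1}$, supported on (curve)$\times$(divisor), acts only on $A^1$; dually for $\Pi_6$ when $n=4$ (the paper's own treatment of $\Pi_6$, via $\Pi_{6,2}={}^t\Pi_{2,0}$, also tacitly assumes $n=4$, so you lose nothing there). The execution differs in a way worth recording. The paper does everything upstairs on $A$: it uses O'Sullivan's symmetrically distinguished cycles to get the commutation $\Pi^A_{2,0}\circ\Gamma_g=\Gamma_g\circ\Pi^A_{2,0}$, shows $\Pi^A_{2,0}\circ\Delta^G_A$ is a homologically trivial idempotent using the hypothesis that $i$ acts by $-\hbox{id}$ on $H^2(X^\prime,\OO_{X^\prime})$, and then descends. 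You work directly on $X$, getting $H^2_{tr}(X)=0$ from $H^2(X,\OO_X)=0$ plus Lefschetz $(1,1)$, which makes the hypothesis on $X^\prime$ and $i$ visibly redundant (it only re-derives $H^2(X,\OO_X)=0$, already assumed) --- a genuine clarification. The one point where you are more cavalier than the paper is the blanket assertion that theorems \ref{pi_2}--\ref{Pi_2} ``go through verbatim'' for the singular quotient $X$: the paper sidesteps this by constructing $\Pi^X_{2,0},\Pi^X_{2,1}$ explicitly by descent from the symmetrically distinguished projectors on $A$ (lemma \ref{sym}, corollary \ref{symref}) and checking the Chow--theoretic vanishing for $\Pi^A_{2,1}$ on $A$ itself. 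Your parenthetical push--pull argument for a correspondence supported on (curve)$\times$(divisor) closes exactly the piece of Vial's theorem you actually need, so there is no gap, but in a final write--up you should route the existence and idempotency of $\Pi^X_{2,0}$ through corollary \ref{symref} rather than through an unproved extension of \cite{V4} to quotient varieties.
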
 
 
 \begin{proof} It suffices to prove this for one particular CK decomposition, in view of the following lemma:
 
 \begin{lemma} Let $X=A/G$ be as in proposition \ref{mck}. Let $\Pi, \Pi^\prime\in A^n(X\times X)$ be idempotents, and assume $\Pi-\Pi^\prime=0$ in $H^{2n}(X\times X)$. Then
   \[  (\Pi)_\ast A^i(X)=0\ \Leftrightarrow\ (\Pi^\prime)_\ast A^i(X)=0\ .\]
   \end{lemma}
   
   \begin{proof} This follows from \cite[Lemma 1.14]{V4}. Alternatively, here is a direct proof.
   Let $p\colon A\to X$ denote the quotient morphism, and let $d:=\vert G\vert$. One defines
   \[  \begin{split}  \Pi_A&:=  {1\over d}\ \  {}^t \Gamma_p\circ \Pi\circ \Gamma_p\ \ \ \in A^n(A\times A)\ ,\\
                             \Pi^\prime_A&:=  {1\over d} {}^t \Gamma_p\circ \Pi^\prime\circ \Gamma_p\ \ \ \in A^n(A\times A)\ .\\ 
                            \end{split}\]
                 It is readily checked $\Pi_A, \Pi_A^\prime$ are idempotents, and they are homologically equivalent.
                 
        Let us assume $(\Pi)_\ast A^i(X)=0$ for a certain $i$. Then also
        \[ (\Pi_A)_\ast p^\ast A^i(X)= \bigl( {1\over d} {}^t \Gamma_p\circ \Pi\circ\Gamma_p\circ {}^t \Gamma_p\bigr){}_\ast A^i(X)=   \bigl( {} {}^t \Gamma_p\circ \Pi\bigr){}_\ast A^i(X)   =0\ .\]
     By finite--dimensionality of $A$, the difference $\Pi_A-\Pi^\prime_A\in A^n_{hom}(A\times A)$ is nilpotent, i.e. there exists $N\in\NN$ such that
     \[ \bigl( \Pi_A-\Pi^\prime_A\bigr)^{\circ N}=0\ \ \ \hbox{in}\ A^n(A\times A)\ .\]
     Upon developing, this implies
     \[  \Pi^\prime_A=(\Pi^\prime_A)^{\circ N}= Q_1+\cdots +Q_N\ \ \ \hbox{in}\ A^n(A\times A)\ ,\]
     where each $Q_j$ is a composition
      \[ Q_j= Q_j^1\circ Q_j^2\circ\cdots\circ Q_j^N\ ,\]
      with $Q_j^k\in \{ \Pi_A,\Pi^\prime_A\}$, and at least one $Q_j^k$ is $\Pi_A$. Since by assumption $   (\Pi_A)_\ast p^\ast A^i(X)=0$, it follows that 
      \[  (Q_j)_\ast= (\hbox{something})_\ast (\Pi_A)_\ast  \bigl((\Pi^\prime_A)^{\circ r}\bigr){}_\ast  =0\colon\ \       p^\ast A^i(X)\ \to\ p^\ast A^i(X)\ \ \ \hbox{for\ all\ }j\ .\] 
      But then also
      \[ (\Pi^\prime_A)_\ast  p^\ast A^i(X) =\bigl( Q_1+\cdots +Q_N\bigr){}_\ast   p^\ast A^i(X)=0\ .\]
       \end{proof}
 
 Now, let us take a projector for $A$ of the form
    \[\Pi^{A}_2=\Pi^{A}_{2,0}+\Pi^{A}_{2,1}\in A^n(A\times A)\ ,\]
    where $\Pi^{A}_{2,0}, \Pi^{A}_{2,1}$ are as in lemma \ref{sym}. 
    
\begin{lemma}\label{sym2} Let $A$ be an abelian variety of dimension $n\le 5$, and let $G\subset\hbox{Aut}_{\ZZ}(A)$ be a finite subgroup. Let $\Pi^A_{2,0}$ be as in lemma \ref{sym}. Then
  \[  \Pi^A_{2,0}\circ \Delta^G_A = \Delta^G_A\circ \Pi^A_{2,0}\ \ \ \in A^n(A\times A) \]
  is idempotent. (Here, as before, we write $\Delta^G_A:={1\over \vert G\vert}{\sum_{g\in G}} \Gamma_g\in A^n(A\times A)$.)
 \end{lemma}
 
 \begin{proof} For any $g\in G$, we have the commutativity
   \[ \Pi^A_{2,0}\circ \Gamma_g=\Gamma_g\circ \Pi^A_{2,0}\ \ \ \hbox{in}\ A^n(A\times A)\ ,\ \ \ \hbox{for\ all\ }g\in G\ , \]
   established in lemma \ref{sym}(\rom2). (Indeed, these cycles are symmetrically distinguished by lemma \ref{sym}(\rom2), and their difference is homologically trivial because an automorphism $g\in G$ respects the niveau filtration.)
   
   This commutativity clearly implies the equality
    \[  \Pi^A_{2,0}\circ \Delta^G_A = \Delta^G_A\circ \Pi^A_{2,0}\ \ \ \in A^n(A\times A)\ . \]
    To check that  $\Pi^A_{2,0}\circ \Delta^G_A $ is idempotent, we note that
    \[     \Pi^A_{2,0}\circ \Delta^G_A \circ  \Pi^A_{2,0}\circ \Delta^G_A =    \Pi^A_{2,0}\circ \Pi^A_{2,0}\circ \Delta^G_A\circ \Delta^G_A    =   \Pi^A_{2,0}\circ \Delta^G_A \ \ \ \hbox{in}\ A^n(A\times A)\ .\]        
     \end{proof}

 Let us write $G=G^\prime\times\{1,i\}$. Since by assumption, $i_\ast=-\hbox{id}$ on $H^{2,0}(X^\prime)$, we have equality
   \[  {1\over 2} \Bigl( \Pi_{2,0}^A\circ \Delta_A^{G^\prime} + \Pi_{2,0}^A\circ \Delta_A^{G^\prime}\circ \Gamma_i\Bigr)=0\ \ \ \hbox{in}\ H^{2n}(A\times A)\ .\]
 On the other hand, the left--hand side is equal to the idempotent $\Pi^A_{2,0}\circ \Delta^G_A$. By finite--dimensionality, it follows that
  \[    \Pi^A_{2,0}\circ \Delta^G_A=0\ \ \ \hbox{in}\ A^n(A\times A)\ .\]
  Using Poincar\'e duality, we also have $i_\ast=-\hbox{id}$ on $H^{2,4}(X^\prime)$, and so (defining $\Pi^A_{6,2}$ as the transpose of $\Pi^A_{2,0}$) there is also an equality
  \[  \Pi^A_{6,2}\circ \Delta^G_A= {1\over 2}  \Bigl( \Pi_{6,2}^A\circ \Delta_A^{G^\prime} + \Pi_{6,2}^A\circ \Delta_A^{G^\prime}\circ \Gamma_i\Bigr)=0\ \ \ \hbox{in}\ H^{2n}(A\times A)\ ,\]  
  and hence, by finite--dimensionality
  \[  \Pi^A_{6,2}\circ \Delta^G_A =0\ \ \   \hbox{in}\ A^n(A\times A)\ .\]
    Since $\Pi^A_{2,1}$ does not act on $A^j(A)$ for $j\not=1$ (theorem \ref{Pi_2}), we find in particular that
   \[ (\Pi^A_2)_\ast =0\colon\ \ \ A^j(A)^G\ \to\ A^j(A)^G\ \ \ \hbox{for\ all\ }j\not=1\ .\]
   Likewise, since $\Pi^A_{6,3}={}^t \Pi^A_{2,1}$ does not act on $A^j(A)$ for $j\not=3$ (theorem \ref{Pi_2}), we also find that
   \[ (\Pi^A_6)_\ast =0\colon\ \ \ A^j(A)^G\ \to\ A^j(A)^G\ \ \ \hbox{for\ all\ }j\not=3\ .\]
   
   We now consider the CK decomposition for $X$ defined as in lemma \ref{sym1}:
   \[ \Pi_i^X:= {1\over d} \Gamma_p\circ \Pi_i^A\circ {}^t \Gamma_p\ \ \ \in A^n(X\times X)\ .\]
   This CK decomposition has the required behaviour:
   \[ \begin{split} (\Pi_2^X)_\ast A^j(X) &=  \Bigl( {1\over d} \Gamma_p\circ \Pi_2^A\circ {}^t \Gamma_p \Bigr){}_\ast A^j(X)\\
           &= ({1\over d}\Gamma_p)_\ast (\Pi_2^A)_\ast p^\ast A^j(X)\\
           &= ({1\over d}\Gamma_p)_\ast (\Pi_2^A)_\ast  A^j(A)^G=0\ \ \ \hbox{for\ all\ }j\not=1\ ,\\
        \end{split}\]  
      and likewise
      \[ (\Pi^X_6)_\ast A^j(X)=0\ \ \ \hbox{for\ all\ }j\not=3\ .\]  
    This proves proposition \ref{2}.    
 \end{proof}
 
 For later use, we record here a corollary of the proof of proposition \ref{2}:
  
 \begin{corollary}\label{symref} Let $A$ be an abelian variety of dimension $n\le 5$, and let $\Pi^A_{2,0}, \Pi^A_{2,1}$ be as in lemma \ref{sym}(\rom2). Let $p\colon A\to X=A/G$ be a quotient variety with $G\subset\hbox{Aut}_{\ZZ}(A)$. The prescription
 \[  \Pi^X_{2,i}:= \Gamma_p\circ \Pi^A_{2,i}\circ {}^t \Gamma_p\ \ \ \hbox{in}\ A^n(X\times X)\]
  defines a decomposition in orthogonal idempotents
   \[ \Pi^X_2= \Pi^X_{2,0}+\Pi^X_{2,1}\ \ \ \hbox{in}\ A^n(X\times X)\ .\]
  The $\Pi^X_{2,i}$ verify the properties of the refined CK decomposition of theorem \ref{Pi_2}. 
 \end{corollary}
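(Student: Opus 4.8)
The plan is to repeat, one graded piece finer, the computations of Claim \ref{sym1} and Lemma \ref{sym2}; everything reduces to the fact that $\Pi^A_{2,0}$ and $\Pi^A_{2,1}$ commute with the group action on $A$. Write $d=\vert G\vert$ and $\Delta^G_A=\tfrac1d\sum_{g\in G}\Gamma_g$, so that $\Gamma_p\circ\Delta^G_A=\Gamma_p$, $\Delta^G_A\circ{}^t\Gamma_p={}^t\Gamma_p$, ${}^t\Gamma_p\circ\Gamma_p=d\,\Delta^G_A$ and $\Gamma_p\circ{}^t\Gamma_p=d\,\Delta_X$, and set $\Pi^X_{2,i}:=\tfrac1d\,\Gamma_p\circ\Pi^A_{2,i}\circ{}^t\Gamma_p$ (the normalisation of Claim \ref{sym1}). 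The one ingredient I would isolate first is the commutation
  \[ \Pi^A_{2,i}\circ\Gamma_g=\Gamma_g\circ\Pi^A_{2,i}\ \ \ \hbox{in}\ A^n(A\times A)\ ,\qquad g\in G,\ i\in\{0,1\}\ ,\]
which is exactly what is proven inside Lemma \ref{sym2}: the two sides agree in $H^{2n}(A\times A)$ because an automorphism of $A$ preserves the niveau filtration, their difference is symmetrically distinguished (composites of symmetrically distinguished cycles, $\Gamma_g$ being the image of the diagonal under the homomorphism $\mathrm{id}\times g$), hence the difference vanishes in $A^n(A\times A)$ by Theorem \ref{os}. Equivalently, each $\Pi^A_{2,i}$ commutes with $\Delta^G_A$.

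Granting this, I would first check that $\Pi^X_{2,0}$ and $\Pi^X_{2,1}$ are mutually orthogonal idempotents. This is the verbatim computation of Claim \ref{sym1} with $\Pi^A_i$ replaced by $\Pi^A_{2,i}$ (and, for orthogonality, by the pair $\Pi^A_{2,0},\Pi^A_{2,1}$): one slides $\Delta^G_A$ past $\Pi^A_{2,i}$ using the commutation, uses that $\Pi^A_{2,0},\Pi^A_{2,1}$ are orthogonal idempotents (Lemma \ref{sym}(\rom2)), and contracts with $\Gamma_p\circ{}^t\Gamma_p=d\,\Delta_X$. Since $\Pi^A_{2,0}+\Pi^A_{2,1}=\Pi^A_2$ (Lemma \ref{sym}(\rom2)), linearity gives immediately
  \[ \Pi^X_{2,0}+\Pi^X_{2,1}=\tfrac1d\,\Gamma_p\circ\Pi^A_2\circ{}^t\Gamma_p=\Pi^X_2\ ,\]
the degree-$2$ term of the CK decomposition of Claim \ref{sym1}.

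Next I would pin down the cohomology class and the action on Chow groups. For cohomology: $p^\ast\colon H^\ast(X)\xrightarrow{\sim}H^\ast(A)^G$ is an isomorphism of Hodge structures and so respects the Hodge filtration and the coniveau filtration; by Remark \ref{is} (with $i=2$) the coniveau and niveau filtrations coincide on $H^2$, hence $p^\ast$ respects $\wt{N}^\bullet H^2$ as well. Since $\Pi^A_{2,i}=\pi^A_{2,i}$ splits $H^2(A)$ as $H^2_{tr}(A)\oplus\wt{N}^1H^2(A)$ in the manner of Theorem \ref{pi_2}, conjugating by $\tfrac1d\Gamma_p$ and ${}^t\Gamma_p$ realises $\Pi^X_{2,0}$ (resp. $\Pi^X_{2,1}$) as the idempotent of $H^{2n}(X\times X)$ projecting $H^\ast(X)$ onto $H^2_{tr}(X)$ (resp. onto $H^2(X)\cap F^1$); these are precisely the refined K\"unneth projectors $\pi^X_{2,i}$ of Theorem \ref{pi_2} for the quotient variety $X$ (that theorem applies: the correspondence formalism extends to quotient varieties by Proposition \ref{quot}, $X$ has dimension $\le5$, and $B(X)$ follows from $B(A)$). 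So $\Pi^X_{2,i}=\pi^X_{2,i}$ in $H^{2n}(X\times X)$. For Chow groups, for $j\neq1$,
  \[ (\Pi^X_{2,1})_\ast A^j(X)=\tfrac1d\,p_\ast(\Pi^A_{2,1})_\ast p^\ast A^j(X)\ \subset\ \tfrac1d\,p_\ast(\Pi^A_{2,1})_\ast A^j(A)=0\ ,\]
since $(\Pi^A_{2,1})_\ast A^j(A)=0$ for $j\neq1$ by Theorem \ref{Pi_2} applied to the abelian variety $A$ (which has finite--dimensional motive and verifies the Lefschetz standard conjecture). This is the defining vanishing of the refined CK decomposition; and since $X$ has finite--dimensional motive, $h_{2,0}(X):=(X,\Pi^X_{2,0},0)$ is well defined up to isomorphism, just as in Theorem \ref{Pi_2}.

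The statement is really a bookkeeping corollary of the proof of Proposition \ref{2}, so no step should be hard; the only point needing a little care is the cohomological identity $\Pi^X_{2,i}=\pi^X_{2,i}$, where one must know that $p^\ast$ carries the \emph{niveau}-graded pieces of $H^2(X)$ into those of $H^2(A)$ --- a priori $p^\ast$ only manifestly respects the coniveau filtration, and the resolution is that on $H^2$ the two filtrations agree (Remark \ref{is}).
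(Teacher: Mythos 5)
Your proposal is correct and follows essentially the same route as the paper: the paper's proof is a one-liner reducing everything to the commutation of the $\Pi^A_{2,i}$ with $\Gamma_g$ (lemma \ref{sym2}), which is exactly the ingredient you isolate, and your idempotency/orthogonality computation is the verbatim adaptation of claim \ref{sym1} that the paper has in mind. Your additional verifications (the identification $\Pi^X_{2,i}=\pi^X_{2,i}$ in cohomology via the coincidence of coniveau and niveau on $H^2$, and the vanishing of $(\Pi^X_{2,1})_\ast$ on $A^j(X)$ for $j\neq 1$) go beyond what the paper writes down but are welcome, and you correctly restore the $\tfrac1d$ normalisation that the corollary's displayed formula omits.
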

 
 \begin{proof} One needs to check the $\Pi^X_{2,i}$ are idempotent and orthogonal. This easily follows from the fact that the $\Pi^A_{2,i}$ commute with $\Gamma_g$ for $g\in G$ (lemma \ref{sym2}).
 \end{proof}

 \subsection{A surjectivity statement}
 
 \begin{proposition}\label{surj} Let $X_0$ be a hyperk\"ahler fourfold as in theorem \ref{epw}. Let $A^\ast_{(\ast)}(X_0)$ be the bigrading defined by the MCK decomposition. Then the intersection product map
  \[ A^2_{(2)}(X_0)\otimes A^2_{(2)}(X_0)\ \to\ A^4_{(4)}(X_0) \]
  is surjective.
  
  The same holds for $X^\prime:=E^4/(G^\prime)$ as in theorem \ref{epw}: $X^\prime$ has an MCK decomposition, and the intersection product map
   \[ A^2_{(2)}(X^\prime)\otimes A^2_{(2)}(X^\prime)\ \to\ A^4_{(4)}(X^\prime) \]
  is surjective.
  \end{proposition}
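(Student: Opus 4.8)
The plan is to reduce the surjectivity, in both cases, to an essentially trivial computation on a variety carrying an explicit multiplicative Beauville--type decomposition --- the self--product $S\times S$ in the case of $X_0$, the abelian fourfold $E^4$ in the case of $X^\prime$ --- and then to descend along a finite quotient.

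\emph{The case of $X_0$.} As $X_0$ is birational to $S^{[2]}$ as hyperk\"ahler varieties, Lemma \ref{hk} (via Rie\ss) yields an isomorphism of Chow rings respecting the MCK bigradings, so it suffices to treat $S^{[2]}$. Recall (proof of Lemma \ref{rho}) the isomorphisms $h(S^{[2]})\cong h(\widetilde{S\times S})^{\Sy_2}$ and $h(\widetilde{S\times S})\cong h(S\times S)\oplus h(S)(1)$, and that Shen--Vial's MCK on $S^{[2]}$ \cite{SV} is compatible with the product MCK on $S\times S$: on the summand $A^\ast(S\times S)^{\Sy_2}$ of $A^\ast(S^{[2]})$ the bigrading is the $\Sy_2$--invariant part of the product bigrading. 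Write $\Pi^S_{2,0}$ for the projector defining the transcendental motive $t_2(S)=h_{2,0}(S)$, so that $(\Pi^S_{2,0})_\ast A^k(S)=0$ for $k\neq2$ and $(\Pi^S_{2,0})_\ast A^2(S)=A^2_{hom}(S)=A^2_{(2)}(S)$. By multiplicativity of the product MCK, $A^4_{(4)}(S\times S)=(\Pi^S_{2,0}\times\Pi^S_{2,0})_\ast A^4(S\times S)$. Since $A^4(S\times S)=A_0(S\times S)$ is generated by classes of products of points $[p]\times[q]$, and $(\Pi^S_{2,0}\times\Pi^S_{2,0})_\ast\bigl([p]\times[q]\bigr)=a_p\times a_q$ with $a_p:=(\Pi^S_{2,0})_\ast[p]=[p]-\mathfrak o_S$ spanning $A^2_{(2)}(S)$ as $p$ varies, the group $A^4_{(4)}(S\times S)$ is spanned by the external products $a\times b=p_1^\ast a\cdot p_2^\ast b$ with $a,b\in A^2_{(2)}(S)$; as $p_i^\ast a\in A^2_{(2)}(S\times S)$ and $(2)+(2)=(4)$, the map $A^2_{(2)}(S\times S)\otimes A^2_{(2)}(S\times S)\to A^4_{(4)}(S\times S)$ is surjective. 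To pass to $S^{[2]}$: because $A^3(S)=0$, the summand $h(S)(1)$ contributes nothing in codimension $4$, so $A^4_{(4)}(S^{[2]})=A^4_{(4)}(S\times S)^{\Sy_2}$; for $a\in A^2_{(2)}(S)$ the $\Sy_2$--invariant class $p_1^\ast a+p_2^\ast a$ descends to some $\bar a\in A^2_{(2)}(S^{[2]})$, and since $A^4(S)=0$ one computes that $\bar a\cdot\bar b$ is the descent of $p_1^\ast a\cdot p_2^\ast b+p_1^\ast b\cdot p_2^\ast a$; letting $a,b$ range over $A^2_{(2)}(S)$ these symmetrized external products span $A^4_{(4)}(S\times S)^{\Sy_2}$. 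This gives the surjectivity for $S^{[2]}$, hence for $X_0$.

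\emph{The case of $X^\prime=E^4/(G^\prime)$.} By the construction of Proposition \ref{mck}, the bigrading on $X^\prime$ is the $G^\prime$--invariant part of the (multiplicative) Beauville bigrading of $A^\ast(E^4)$, pulled back isomorphically by $p\colon E^4\to X^\prime$; thus $A^i_{(s)}(X^\prime)=A^i_{(s)}(E^4)^{G^\prime}$ and $p^\ast$ is an isomorphism of bigraded rings onto $A^\ast(E^4)^{G^\prime}$. On $E^4$ the product formula gives $A^1_{(1)}(E^4)=\pic^0(E^4)$, and as $p_i^\ast\pic^0(E)\cdot p_i^\ast\pic^0(E)\subset p_i^\ast A^2(E)=0$ one obtains
  \[ A^2_{(2)}(E^4)=\bigoplus_{1\le i<j\le 4}p_i^\ast\pic^0(E)\cdot p_j^\ast\pic^0(E)\ ,\qquad A^4_{(4)}(E^4)=p_1^\ast\pic^0(E)\cdots p_4^\ast\pic^0(E)\ ,\]
so the multiplication $A^2_{(2)}(E^4)\otimes A^2_{(2)}(E^4)\to A^4_{(4)}(E^4)$ is surjective --- indeed each of the three ``complementary pairings'' $\{1,2\}\sqcup\{3,4\}$, $\{1,3\}\sqcup\{2,4\}$, $\{1,4\}\sqcup\{2,3\}$ already fills $A^4_{(4)}(E^4)$. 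It remains to descend this to $G^\prime$--invariants, i.e.\ to check that
  \[ A^2_{(2)}(E^4)^{G^\prime}\cdot A^2_{(2)}(E^4)^{G^\prime}=A^4_{(4)}(E^4)^{G^\prime}\ .\]

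This last step is where the real work lies, and I expect it to be the main obstacle. For the $\Sy_2$--quotient above the descent was automatic because $\Sy_2$ permutes the explicit spanning set $\{a\times b\}$ of $A^4_{(4)}(S\times S)$, so invariants are produced by symmetrizing generators. But $G^\prime\subset\hbox{Aut}(E^4)$ mixes the four elliptic factors, hence does not permute the spanning set $\{a_1\times a_2\times a_3\times a_4\}$ of $A^4_{(4)}(E^4)$, and taking $G^\prime$--invariants of a surjection of $G^\prime$--modules need not remain surjective. The way to push this through should be to combine the three--fold redundancy noted above with the explicit description of $G^\prime$ acting on $\pic^0(E^4)$ furnished by \cite{DBG}; an alternative would be to transport the result directly from $X_0$ along the symplectic resolution $g\colon X_0\to X^\prime$, granted that $g^\ast$ is compatible with the two bigradings and identifies $A^2_{(2)}(X^\prime)$ with $A^2_{(2)}(X_0)$ --- the ``new'' classes on $X_0$ being supported on the exceptional locus and thus of grading $0$ in codimension $2$.
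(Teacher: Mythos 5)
The first half of your argument is fine in outcome: after reducing to $S^{[2]}$ via Lemma \ref{hk}, the surjectivity of $A^2_{(2)}(S^{[2]})\otimes A^2_{(2)}(S^{[2]})\to A^4_{(4)}(S^{[2]})$ is exactly \cite[Theorem 3]{SV}, which is what the paper simply cites. Your sketch of a proof via $S\times S$ is morally the right picture, but it silently assumes the nontrivial compatibility between the Shen--Vial MCK bigrading on $S^{[2]}$ and the product bigrading on $S\times S$ (namely that $\bar a\in A^2_{(2)}(S^{[2]})$ and that $A^4_{(4)}(S^{[2]})$ is precisely the descent of $A^4_{(4)}(S\times S)^{\Sy_2}$); establishing that compatibility is a substantial part of the content of \cite{SV}, so you should cite the theorem rather than re-derive it.

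The second half has a genuine gap, which you yourself flag: after reducing to the question $A^2_{(2)}(E^4)^{G^\prime}\cdot A^2_{(2)}(E^4)^{G^\prime}\stackrel{?}{=}A^4_{(4)}(E^4)^{G^\prime}$, you are left with a statement that does not follow formally from surjectivity before taking invariants, and you do not close it. The ``threefold redundancy'' of complementary pairings does not obviously help, because $G^\prime$ genuinely mixes the four factors $\pic^0(E)$, and you supply none of the explicit representation-theoretic input from \cite{DBG} that would be needed. The alternative you mention in your last sentence --- transporting the result from $X_0$ down to $X^\prime$ along the symplectic resolution --- is in fact the paper's route, but it is not a formality: one needs $\phi^\ast$ to induce isomorphisms $A^2_{(2)}(X^\prime)\cong A^2_{(2)}(X_0)$ and $A^4_{(4)}(X^\prime)\cong A^4_{(4)}(X_0)$ (both injectivity and surjectivity are used in the diagram chase), and this is obtained from isomorphisms of Chow motives $h_{p,0}(X^\prime)\cong h_{p,0}(X_0)$ for $p=2,4$, which in turn require Lemma \ref{equiv}, hence the strong form of the generalized Hodge conjecture of Proposition \ref{ghc} (resting on the $\rho$-maximality of Vinberg's $K3$) together with finite-dimensionality. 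Your proposed shortcut --- that the ``new'' classes on $X_0$ are supported on the exceptional locus and have grading $0$ --- addresses at best surjectivity of $\phi^\ast$ in codimension $2$ and says nothing about the codimension-$4$ comparison, so it does not substitute for that argument.
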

  
  \begin{proof} The result of Rie\ss\, \cite{Rie} implies there is an isomorphism of bigraded rings
    \[ A^\ast_{(\ast)}(S^{[2]})\ \xrightarrow{\cong}\ A^\ast_{(\ast)}(X_0)\ .\]
    For the Hilbert scheme of any $K3$ surface $S$, the intersection product map 
    \[  A^2_{(2)}(S^{[2]})\otimes A^2_{(2)}(S^{[2]})\ \to\ A^4_{(4)}(S^{[2]}) \]
  is known to be surjective \cite[Theorem 3]{SV}. This proves the first statement.
  
  For the second statement, the existence of an MCK decomposition for $X^\prime$ is a special case of proposition \ref{mck}. To prove the surjectivity statement for $X^\prime$,
  we note that $\phi\colon X_0\to X^\prime$ is a symplectic resolution and so there are isomorphisms
    \[    \phi^\ast\colon\ \ \ H^{p,0}(X^\prime)\ \xrightarrow{\cong}\ H^{p,0}(X_0)\ \ \ (p=2,4)\ .\]
   Using lemma \ref{equiv} (which is possible thanks to proposition \ref{ghc}), this implies there are isomorphisms
   \[  \phi^\ast\colon\ \ \ H^{p}_{tr}(X^\prime)\ \xrightarrow{\cong}\ H^{p}_{tr}(X_0)\ \ \ (p=2,4)\ .\]  
   This means there is an isomorphism of homological motives
   \[ {}^t \Gamma_\phi\colon\ \ \ h_{p,0}(X^\prime)\ \xrightarrow{\cong}\ h_{p,0}(X_0) \ \ \ \hbox{in}\ \MM_{\rm hom}    \ \ \ (p=2,4)\ .\]
   By finite--dimensionality, there are isomorphisms of Chow motives
    \[ {}^t \Gamma_\phi\colon\ \ \ h_{p,0}(X^\prime)\ \xrightarrow{\cong}\ h_{p,0}(X_0) \ \ \ \hbox{in}\ \MM_{\rm rat}    \ \ \ (p=2,4)\ .\]
    Taking Chow groups, this implies there are isomorphisms
    \begin{equation}\label{isiso} (\Pi^{X_0}_p \circ {}^t \Gamma_\phi\circ \Pi^{X^\prime}_p)_\ast \colon\ \ (\Pi^{X^\prime}_p)_\ast A^i(X^\prime)\ \to\ (\Pi^{X_0}_p)_\ast 
    A^i(X_0)\ \ \ (p=2,4)\ .\end{equation}
    Let us now consider the diagram
    \[ \begin{array}[c]{ccc}
       A^2_{(2)}(X_0)\otimes A^2_{(2)}(X_0)   & \to&  A^4_{(4)}(X_0)\\
       \uparrow && \uparrow\\
        A^2_{}(X_0)\otimes A^2_{}(X_0)   & \to&  A^4_{}(X_0)\\
        \uparrow && \uparrow\\
     A^2_{(2)}(X^\prime)\otimes A^2_{(2)}(X^\prime)   & \to&  A^4_{(4)}(X^\prime)\\
      \end{array}\]
      Here, the vertical arrows in the upper square are given by projecting to direct summand; the vertical arrows in the lower square are given by $\phi^\ast$. Since pullback and intersection product commute, the lower square commutes. Since $A^\ast_{(\ast)}(X_0)$ is a bigraded ring, the upper square commutes.
      
      The composition of vertical arrows is an isomorphism by (\ref{isiso}). The statement for $X^\prime$ now follows from the statement for $X_0$. 
        \end{proof}

 \section{Main results}

\subsection{Splitting of $A^\ast(X)$}

\begin{theorem}\label{main} Let $X$ be the very special EPW sextic of theorem \ref{epw}. The Chow ring of $X$ is a bigraded ring
  \[ A^\ast(X)= A^\ast_{(\ast)}(X)\ ,\]
  where
  \[ \begin{split} A^1(X)&=A^1_{(0)}(X)=\QQ\ ,\\
                        A^2(X)&=A^2_{(0)}(X)\ ,\\
                        A^3(X)&=A^3_{(0)}(X)\oplus A^3_{(2)}(X)= \QQ\oplus A^3_{hom}(X)\ ,\\
                        A^4(X)&=A^4_{(0)}(X)\oplus A^4_{(4)}(X)=\QQ\oplus A^4_{hom}(X)\ .\\
                        \end{split}\]
  \end{theorem}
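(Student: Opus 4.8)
The plan is to run the two structural inputs Propositions~\ref{mck} and~\ref{2} through the list of Chow groups of $X$. By Theorem~\ref{epw}(\rom2) the very special EPW sextic is a quotient $X=E^4/G$ of an abelian fourfold by a finite group of group automorphisms, so Proposition~\ref{mck} equips $X$ with a self-dual MCK decomposition $\{\Pi_i^X\}$, hence with the multiplicative bigrading $A^i(X)=\bigoplus_j A^i_{(j)}(X)$, $A^i_{(j)}(X):=(\Pi^X_{2i-j})_\ast A^i(X)$; this is the first displayed assertion. Two general reductions cut down the possibly nonzero pieces. First, $X$ has finite-dimensional motive (as $E^4$ does) and $H^{\mathrm{odd}}(X,\QQ)=0$: below the middle dimension this is the Lefschetz hyperplane theorem for the hypersurface $X\subset\PP^5$, and above the middle dimension it follows from the vanishing below by Poincar\'e duality for the quotient variety $X$. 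Since a homologically trivial idempotent self-correspondence on a finite-dimensional motive is numerically trivial and hence nilpotent (Proposition~\ref{quotientnilp}), hence zero, we get $\Pi^X_{2k+1}=0$ and therefore $A^i_{(j)}(X)=0$ for $j$ odd. Secondly, the formula $\Pi^X_i=\tfrac{1}{d}\,\Gamma_p\circ\Pi^{E^4}_i\circ{}^t\Gamma_p$ of Claim~\ref{sym1} together with the commutation $\Pi^{E^4}_i\circ\Gamma_g=\Gamma_g\circ\Pi^{E^4}_i$ ($g\in G$, Lemma~\ref{comm}) gives $p^\ast(\Pi^X_i)_\ast=(\Pi^{E^4}_i)_\ast p^\ast$ for the quotient map $p\colon E^4\to X$; as $p^\ast$ is injective this embeds each $A^i_{(j)}(X)$ into the piece $A^i_{(j)}(E^4)$ of Beauville's decomposition of $E^4$. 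In particular Murre's vanishing ($(\Pi^X_k)_\ast A^i(X)=0$ for $k\notin\{i,\dots,2i\}$) holds for $X$, because it holds for $E^4$, so $A^i(X)=\bigoplus_{0\le j\le i}A^i_{(j)}(X)$.

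For $A^1(X)$: $X$ is a normal hypersurface of dimension $\ge 3$, so $\mathrm{Pic}(X)_\QQ=\QQ h$ (Grothendieck--Lefschetz), and $q(X)=h^1(\OO_X)=0$; hence $A^1(X)=\mathrm{NS}(X)_\QQ=\QQ h$ embeds in $H^2(X,\QQ)$, and as the ample class lies in the $(0)$-part, $A^1(X)=A^1_{(0)}(X)=\QQ$. For $A^2(X)$: Proposition~\ref{2} applies, since $\dim X=4\le 5$, since $H^2(\OO_X)=0$ ($X$ being a Calabi--Yau hypersurface), and since by Theorem~\ref{epw}(\rom2) there is a double cover $g\colon X^\prime=E^4/(G^\prime)\to X$ with $G=(G^\prime,i)$, $i^2\in G^\prime$, on which $i$ acts as $-\mathrm{id}$ on $H^2(\OO_{X^\prime})$ --- indeed $X=X^\prime/\langle\overline{i}\,\rangle$ and $g_\ast\OO_{X^\prime}=\OO_X\oplus\LL$ show $H^2(\OO_{X^\prime})=H^2(\OO_X)\oplus H^2(X,\LL)$ with the ($i$-invariant) first summand zero. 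Proposition~\ref{2} then gives $(\Pi^X_2)_\ast A^2(X)=0$, i.e. $A^2_{(2)}(X)=0$; together with $A^2_{(1)}(X)=0$ this forces $A^2(X)=A^2_{(0)}(X)$.

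For $A^4(X)$: the odd-piece vanishing kills $A^4_{(1)}$ and $A^4_{(3)}$, Proposition~\ref{2} gives $(\Pi^X_6)_\ast A^4(X)=0$ hence $A^4_{(2)}(X)=0$, and Murre's vanishing kills $j>4$, leaving $A^4(X)=A^4_{(0)}(X)\oplus A^4_{(4)}(X)$. Now $p^\ast$ embeds $A^4_{(0)}(X)$ into $A^4_{(0)}(E^4)$, which equals $\QQ$ by Bloch's theorem on zero-cycles of abelian varieties (the codimension-$g$ case of Beauville's conjecture); as it contains the nonzero class of a point, $A^4_{(0)}(X)=\QQ$. On the other hand $A^4_{(4)}(X)=(\Pi^X_4)_\ast A^4(X)$ maps to $(\pi^X_4)_\ast H^8(X)=0$, so $A^4_{(4)}(X)\subseteq A^4_{hom}(X)$; and a homologically trivial $\alpha=\alpha_0+\alpha_4$ (with $\alpha_j\in A^4_{(j)}(X)$) has $\alpha_0\in A^4_{(0)}(X)=\QQ$ of degree $0$, hence $\alpha_0=0$. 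Thus $A^4(X)=\QQ\oplus A^4_{hom}(X)$.

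The remaining case $A^3(X)$ is the crux, and the obstacle worth flagging. The odd-piece vanishing and Murre's vanishing leave $A^3(X)=A^3_{(0)}(X)\oplus A^3_{(2)}(X)$, and $A^3_{(2)}(X)=(\Pi^X_4)_\ast A^3(X)\subseteq A^3_{hom}(X)$ by the same degree argument (it maps to $(\pi^X_4)_\ast H^6(X)=0$). What must be shown is $A^3_{(0)}(X)\cap A^3_{hom}(X)=0$. Pulling back along $p$ embeds $A^3_{(0)}(X)$, $G$-equivariantly, into $A^3_{(0)}(E^4)$, the codimension-$(g-1)$ piece of Beauville's decomposition of the abelian fourfold $E^4$. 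Here one needs Beauville's conjecture on Chow rings of abelian varieties in codimension $g-1$; this case \emph{is} known (being Fourier-dual to the case of divisors), so $A^3_{(0)}(E^4)\hookrightarrow H^6(E^4,\QQ)$, and restricting to $G$-invariants gives $A^3_{(0)}(X)\hookrightarrow H^6(E^4,\QQ)^G=H^6(X,\QQ)$. Consequently $A^3_{hom}(X)=A^3_{(2)}(X)$ and $A^3_{(0)}(X)\xrightarrow{\ \sim\ }A^3(X)/A^3_{hom}(X)\subseteq H^6(X,\QQ)$; since $H^6(X,\QQ)\cong H^2(X,\QQ)^\vee=\QQ$ (Poincar\'e duality for the quotient variety together with $\mathrm{Pic}(X)_\QQ=\QQ$) and $h^3\ne0$, we conclude $A^3_{(0)}(X)=\QQ$ and $A^3(X)=\QQ\oplus A^3_{hom}(X)$. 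The whole difficulty is thus concentrated in this appeal to the known codimension-$(g-1)$ case of Beauville's conjecture; its codimension-$2$ analogue being open is exactly why for $A^2(X)$ one obtains only $A^2(X)=A^2_{(0)}(X)$ and not the homological injectivity of $A^2_{(0)}(X)$ (which would be Conjecture~\ref{weak} for $X$).
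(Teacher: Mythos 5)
Your proof is correct and follows essentially the same route as the paper: Proposition~\ref{mck} for the MCK decomposition, Proposition~\ref{2} to kill $(\Pi^X_2)_\ast A^2$ and $(\Pi^X_6)_\ast A^4$, finite--dimensionality to kill the odd projectors, and the embedding $p^\ast\colon A^i_{(j)}(X)\hookrightarrow A^i_{(j)}(E^4)$ to reduce the remaining vanishings and the injectivity of $A^j_{(0)}$ into cohomology (for $j\ge 3$) to Beauville's results on abelian fourfolds. You merely spell out several points the paper leaves implicit ($A^1(X)=\QQ$, $H^{\rm odd}(X)=0$, the verification of the hypotheses of Proposition~\ref{2}, and the identification $A^j_{(0)}(X)=\QQ$ for $j=3,4$), all correctly.
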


  \begin{proof} It follows from theorem \ref{epw} that $X$ is a quotient variety $X=E^4/G$ with $G\subset\hbox{Aut}_{\ZZ}(A)$. Moreover, there is another quotient variety $X^\prime=E^4/(G^\prime)$ where $G=(G^\prime,i)$ and $i^2\in G^\prime$ and such that $i$ acts on $H^2(X^\prime,\OO_{X^\prime})$ as $-\hbox{id}$. Applying proposition \ref{mck}, it follows that $X$ has an MCK decomposition $\{\Pi_i^X\}$. Applying proposition \ref{2}, it follows that 
  \[  \begin{split}   &(\Pi^X_2)_\ast A^j(X)=0\ \ \  \hbox{for\ all\ } j\not=1\ ,\\
                      &(\Pi^X_6)_\ast A^j(X)=0\ \ \ \hbox{for\ all\ }j\not=3\ .\\
                     \end{split}\]
      The projectors $\Pi^X_i$ are $0$ for $i$ odd. (Indeed, $X$ has no odd cohomology so the $\Pi^X_i$ are homologically trivial. Using finite--dimensionality, they are rationally trivial.)                
  
  The projectors $\{\Pi_i^X\}$ define a multiplicative bigrading
  \[  A^\ast(X)= A^\ast_{(\ast)}(X)\ ,\]
  where $A^j_{(i)}(X):=(\Pi^X_{2j-i})_\ast A^j(X)$. The fact that $A^j_{(i)}(X)=0$ for $i<0$ follows from the corresponding property for abelian fourfolds \cite{Beau}.
  Likewise, the fact that
    \[ A^j_{(0)}(X)\cap A^j_{hom}(X)=0\ \ \ \hbox{for\ all\ }j\ge 3\]
    follows from the corresponding property for abelian fourfolds \cite{Beau}.
   \end{proof}

  \begin{corollary}\label{multip} Let $X$ be the very special EPW sextic. The intersection product maps
    \[ \begin{split}  A^2(X)\otimes A^2(X)\ &\to\ A^4(X)\ ,\\
                          A^2(X)\otimes A^1(X)\ &\to\ A^3(X)\ \\
                        \end{split}\]
               have image of dimension $1$.           
                            \end{corollary}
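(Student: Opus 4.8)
\textbf{Proof proposal for corollary \ref{multip}.} The plan is to combine the multiplicativity of the bigrading furnished by theorem \ref{main} with the non-vanishing of powers of the hyperplane class. By theorem \ref{main} we have $A^1(X)=A^1_{(0)}(X)$ and $A^2(X)=A^2_{(0)}(X)$. Since the Chow--K\"unneth decomposition $\{\Pi^X_i\}$ is multiplicative (proposition \ref{mck}), the bigrading $A^j_{(i)}(X)=(\Pi^X_{2j-i})_\ast A^j(X)$ is a ring grading; in particular intersection product sends $A^2_{(0)}(X)\otimes A^2_{(0)}(X)$ into $A^4_{(0)}(X)$ and $A^2_{(0)}(X)\otimes A^1_{(0)}(X)$ into $A^3_{(0)}(X)$. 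Again by theorem \ref{main}, $A^4_{(0)}(X)=\QQ$ and $A^3_{(0)}(X)=\QQ$. Hence both maps in the statement have image of dimension at most $1$.

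It remains to check the images are non-zero. Let $h\in A^1(X)$ be the hyperplane class. Then $h^2\in A^2(X)$ and $h^2\cdot h^2=h^4\in A^4(X)$; since $X\subset\PP^5$ is a sextic, $h^4$ has degree $6$, so its image under $A^4(X)\to H^8(X,\QQ)=\QQ$ is non-zero, whence $h^4\neq 0$ in $A^4(X)$. This shows the image of $A^2(X)\otimes A^2(X)\to A^4(X)$ is exactly $\QQ\cdot h^4$, of dimension $1$. Likewise $h^2\cdot h=h^3\in A^3(X)$ maps to a non-zero class in $H^6(X,\QQ)$, so $h^3\neq 0$ and the image of $A^2(X)\otimes A^1(X)\to A^3(X)$ is $\QQ\cdot h^3$, of dimension $1$.

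There is essentially no real obstacle here: the corollary is a formal consequence of theorem \ref{main} once one observes that $h^3$ and $h^4$ are non-zero, which is immediate from their non-triviality in cohomology (or from the degree map on $A_0(X)$, which is well defined since $X$ is a projective quotient variety by proposition \ref{quot}). If anything deserves a word of care, it is only the remark that the product of elements of $A^2_{(0)}(X)$ genuinely lands in the one-dimensional piece $A^4_{(0)}(X)$ and not merely in $A^4(X)$ — but this is exactly the content of multiplicativity of $\{\Pi^X_i\}$ combined with the vanishing $A^4_{(j)}(X)=0$ for $j\notin\{0,4\}$ recorded in theorem \ref{main}, together with the fact that $h^4$, being algebraic and non-zero in $H^8(X,\QQ)$, lies in the $(0)$-graded piece.
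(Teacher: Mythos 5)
Your argument is correct and is exactly the intended one: the corollary is an immediate consequence of theorem \ref{main}, since multiplicativity of the bigrading forces the products to land in $A^4_{(0)}(X)=\QQ$ and $A^3_{(0)}(X)=\QQ$, and the non-vanishing of $h^3$ and $h^4$ in cohomology shows the images are exactly one-dimensional. Nothing to add.
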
 
   
 \begin{remark} It is instructive to note that for smooth Calabi--Yau hypersurfaces $X\subset\PP^{n+1}(\C)$, Voisin has proven that
  the intersection product map
    \[    A^{j}(X)\otimes A^{n-j}(X)\ \to\ A^n(X)  \]
    has image of dimension $1$, for any $0<j<n$ \cite[Theorem 3.4]{V13}, \cite[Theorem 5.25]{Vo} (cf. also \cite{LFu} for a generalization to generic complete intersections). 
    
    In particular, the first statement of corollary \ref{multip} holds for any smooth sextic in $\PP^5(\C)$. 
    The second statement of 
 corollary \ref{multip}, however, is not known (and maybe not true) for a general sextic in $\PP^5(\C)$. It might be that the second statement is specific to EPW sextics, and related to the presence of a hyperk\"ahler fourfold $X_0$ which is generically a double cover. 
  \end{remark}

 \begin{remark}\label{BB} Let $F^\ast$ be the filtration on $A^\ast(X)$ defined as
   \[ F^i A^j(X)=\bigoplus_{\ell\ge i} A^j_{(\ell)}(X)\ .\]
For this filtration to be of Bloch--Beilinson type, it remains to prove that
   \[ F^1 A^2(X) \stackrel{??}{=} A^2_{hom}(X) \ .\]
   This would imply the vanishing $A^2_{hom}(X)=0$ (i.e. the truth of conjecture \ref{weak} for $X$).
   
   Unfortunately, we cannot prove this. At least, it follows from the above description that the conjectural vanishing $A^2_{hom}(X)=0$ would follow from the truth of Beauville's conjecture
   \[ A^2_{hom}(E^4) \stackrel{??}{=} A^2_{(1)}(E^4)\oplus A^2_{(2)}(E^4)\ ,\]
  where $E$ is an elliptic curve.  
  \end{remark}

\subsection{Splitting of $A^\ast(X^r)$}

\begin{definition} Let $X$ be a projective quotient variety. For any $r\in\NN$, and any $1\le i<j<k\le r$, let
  \[ \begin{split} &p_j\colon\ \ X^r\ \to\ X\ ,\\
                   &p_{ij}\colon\ \ X^r\ \to\ X\times X\ ,\\
                   &p_{ijk}\colon\ \ X^r\ \to\ X\times X\times X\\
                   \end{split}\]
          denote projection on the $j$-th factor, resp. projection on the $i$-th and $j$-th factor, resp. projection on the $i$-th and $j$-th and $k$-th factor.
  
  We define        
   \[ E^\ast(X^r)\ \subset\ A^\ast(X^r) \]
   as the $\QQ$--subalgebra generated by 
    $(p_j)^\ast A^1(X)$ and $(p_j)^\ast A^2(X)$ and $(p_{ij})^\ast(\Delta_X)\in A^4(X^r)$ and $(p_{ijk})^\ast(\Delta^X_{sm})\in A^8(X^r)$.
\end{definition}

As explained in the introduction, the hypothesis that EPW sextics that are quotient varieties are in the class $\CC$ leads to the following concrete conjecture:

\begin{conjecture}\label{conjXk} Let $X\subset\PP^5(\C)$ be an EPW sextic which is a projective quotient variety. Let $r\in\NN$. The restriction of the cycle class map
  \[ E^i(X^r)\ \to\ H^{2i}(X^r) \]
  is injective for all $i$.
  \end{conjecture}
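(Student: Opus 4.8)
The final statement is a conjecture rather than a theorem, so any proof strategy is necessarily conditional on the structural expectations recorded in the introduction; the plan is to reduce conjecture \ref{conjXk} to the membership $X\in\CC$ of conjecture \ref{optim} together with (a quotient--variety version of) the weak Lefschetz vanishing of conjecture \ref{weak}. Granting $X\in\CC$, the Chow ring $A^\ast(X)$ carries a multiplicative bigrading $A^\ast_{(\ast)}(X)$ that splits the Bloch--Beilinson filtration, so that $A^i_{hom}(X)=\bigoplus_{j\ge 1}A^i_{(j)}(X)$, and this bigrading is underlain by an MCK decomposition $\{\Pi^X_i\}$. The product decomposition $\{\Pi^X_{i_1}\otimes\cdots\otimes\Pi^X_{i_r}\}$ is then an MCK decomposition of $X^r$, inducing a multiplicative bigrading on $A^\ast(X^r)$ that again splits Bloch--Beilinson. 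The heart of the argument is the inclusion
\[ E^\ast(X^r)\ \subset\ A^\ast_{(0)}(X^r)\ ,\]
after which injectivity on $E^\ast(X^r)$ is formal: the kernel of the cycle class map is $A^\ast_{hom}(X^r)=\bigoplus_{j\ge1}A^\ast_{(j)}(X^r)$, which meets the subring $A^\ast_{(0)}(X^r)$ only in $0$.

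To establish this inclusion I would treat each family of generators separately, using that the projections $p_j$, $p_{ij}$, $p_{ijk}$ are morphisms of bigraded rings for the product MCK (pullback along a projection respects the grading, being tensor product with the fundamental class in the forgotten factors). Divisors land in $A^1_{(0)}(X)$ automatically, since $A^1_{hom}(X)=0$, so $(p_j)^\ast A^1(X)\subset A^1_{(0)}(X^r)$. For codimension--two cycles one invokes the extension of conjecture \ref{weak} to quotient varieties: $A^2_{hom}(X)=0$ forces $A^2_{(j)}(X)=0$ for $j\ge1$, whence $A^2(X)=A^2_{(0)}(X)$ and $(p_j)^\ast A^2(X)\subset A^2_{(0)}(X^r)$. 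Finally, that the diagonal $\Delta_X\in A^4(X\times X)$ and the small diagonal $\Delta^X_{sm}\in A^8(X\times X\times X)$ define degree--zero classes for the product MCK is a standard feature of an MCK decomposition (cf. \cite[Section 8]{SV}, \cite{FTV})---indeed the multiplicativity of the Chow--K\"unneth decomposition is precisely the assertion that $\Delta^X_{sm}$ respects the grading. Pulling back gives $(p_{ij})^\ast(\Delta_X)$ and $(p_{ijk})^\ast(\Delta^X_{sm})$ in $A^\ast_{(0)}(X^r)$. Since $A^\ast_{(0)}(X^r)$ is closed under intersection product, the subalgebra generated by all these classes is contained in it.

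The main obstacle is that \emph{neither} input is currently available in general: proving $X\in\CC$ requires constructing an MCK decomposition for an \emph{arbitrary} quotient--variety EPW sextic and, crucially, showing it genuinely splits the conjectural Bloch--Beilinson filtration, i.e. $A^i_{(0)}(X^r)\cap A^i_{hom}(X^r)=0$ for all $i$ and $r$; and the weak Lefschetz vanishing $A^2_{hom}(X)=0$ is notoriously open for hypersurfaces of degree $d\ge n+2$. Even in the most favourable case---the very special EPW sextic of \cite{DBG}, where description (b) furnishes an MCK decomposition via proposition \ref{mck} and the partial vanishing $A^2(X)=A^2_{(0)}(X)$ via proposition \ref{2}---the splitting property $A^i_{(0)}(X^r)\cap A^i_{hom}(X^r)=0$ can only be verified in the range $i\ge 4r-1$, because outside this range it becomes equivalent to open cases of Beauville's conjecture on the Chow ring of $E^4$ (remarks \ref{BB} and \ref{BBk}). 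Thus the realistic target is not the full conjecture but the partial statement of theorem \ref{main2}, and conjecture \ref{conjXk} itself should be regarded as contingent on both conjecture \ref{optim} and a quotient--variety form of conjecture \ref{weak}.
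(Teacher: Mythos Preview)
Your analysis is correct and matches the paper's own treatment. The statement is a conjecture, not a theorem; the paper offers no proof but only the heuristic derivation from conjectures \ref{optim} and \ref{weak} (given in the introduction) together with the partial result theorem \ref{main2}, and your proposal reproduces both of these accurately---including the key inclusion $E^\ast(X^r)\subset A^\ast_{(0)}(X^r)$, the verification that $\Delta_X$ and $\Delta^X_{sm}$ lie in degree $(0)$, and the identification of the remaining obstruction $A^i_{(0)}(X^r)\cap A^i_{hom}(X^r)\stackrel{??}{=}0$ with open cases of Beauville's conjecture.
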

  
 For the very special EPW sextic, we can prove conjecture \ref{conjXk} for $0$--cycles and $1$--cycles:

 \begin{theorem}\label{main2} Let $X$ be the very special EPW sextic of definition \ref{epw}. Let $r\in\NN$.    
    The restriction of the cycle class map
    \[ E^i(X^r)\ \to\ H^{2i}(X^r) \]
    is injective for $i\ge 4r-1$.
   \end{theorem}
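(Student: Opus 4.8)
The plan is to reduce everything to the abelian variety $E^4$ and its powers, using the quotient structure $X = E^4/G$ together with the MCK decomposition produced in Proposition \ref{mck} and the special vanishing statements of Proposition \ref{2}. First I would recall that $X^r$ is again a projective quotient variety, namely $X^r = (E^4)^r/G^r = A/G^r$ where $A = E^{4r}$, so Proposition \ref{mck} equips $X^r$ with a self-dual MCK decomposition $\{\Pi^{X^r}_i\}$, and the induced bigrading $A^\ast_{(\ast)}(X^r)$ is a bigraded ring. The generators of $E^\ast(X^r)$ — pullbacks of $A^1(X)$, $A^2(X)$, the diagonal class, and the small diagonal class — all lie in $A^\ast_{(0)}(X^r)$: for $A^1(X) = A^1_{(0)}(X)$ and $A^2(X) = A^2_{(0)}(X)$ this is Theorem \ref{main} (equivalently Proposition \ref{2}), while the diagonal and small diagonal are in degree-$0$ piece precisely because the $\Pi^{X^r}_i$ form an MCK decomposition (the multiplicativity forces $\Delta_X$ and $\Delta^X_{sm}$, pulled back, to be in the $(0)$-graded part, as in \cite[Section 8]{SV}). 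Since $A^\ast_{(0)}(X^r)$ is a subring, we conclude $E^\ast(X^r) \subset A^\ast_{(0)}(X^r)$.

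So the theorem reduces to showing that the cycle class map is injective on $A^i_{(0)}(X^r)$ for $i \ge 4r-1$, i.e. that
\[ A^i_{(0)}(X^r) \cap A^i_{hom}(X^r) = 0 \qquad \text{for } i \ge 4r - 1. \]
Here I would pass to the cover: pulling back along $p\colon A = E^{4r} \to X^r$ is injective on Chow groups (it has a left inverse $\tfrac1d (p)_\ast$), and carries $A^i_{(0)}(X^r)$ into $A^i_{(0)}(A)^{G^r}$, compatibly with the bigradings (this is the content of Lemma \ref{sym1}/Corollary \ref{symref}, since the $\Pi^{X^r}_i$ are induced from the symmetrically distinguished $\Pi^A_i$). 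Thus it suffices to prove $A^i_{(0)}(A) \cap A^i_{hom}(A) = 0$ for $i \ge 4r-1$, where $A$ is an abelian variety of dimension $4r$. For abelian varieties this is Beauville's result \cite{Beau}: the piece $A^i_{(0)}(A)$ injects into cohomology whenever $i \ge \dim A - 1$ — indeed $A^i_{(j)}(A)$ vanishes for $j < 0$ always, vanishes for $j > i$, and more to the point $A^i_{hom}(A) = \bigoplus_{j\ge 1} A^i_{(j)}(A)$ is known in the range $i \ge \dim A - 1$ (Beauville's theorem that the Bloch–Beilinson-type conjecture holds for $A^{\dim A}$ and $A^{\dim A - 1}$). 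With $\dim A = 4r$, the range $i \ge 4r-1$ is exactly what Beauville's theorem delivers.

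Assembling: for $i \ge 4r-1$, an element $a \in E^i(X^r)$ with $[a] = 0$ in $H^{2i}(X^r)$ lies in $A^i_{(0)}(X^r)$; its pullback $p^\ast a \in A^i_{(0)}(A)^{G^r}$ is homologically trivial, hence zero by Beauville; since $p^\ast$ is injective, $a = 0$. I would spell out that "$p^\ast$ respects the bigrading" step carefully — it is the one place where one must invoke that the MCK decomposition on $X^r$ is the one induced from $A$ (Lemma \ref{sym1}) and not some other CK decomposition, though the lemma following Proposition \ref{2} shows the vanishing statements are independent of this choice. The main obstacle is really locating the sharp form of Beauville's theorem: one needs $A^i_{hom}(A) = F^1 A^i(A)$ in the Fourier-decomposition sense precisely for $i \in \{\dim A - 1, \dim A\}$, which is exactly the boundary of what is unconditionally known (this is the source of the restriction, as flagged in Remark \ref{BB} and the remarks referenced there); pushing below $i = 4r-1$ would require the open cases of Beauville's conjecture on $A^2_{hom}$ of powers of elliptic curves, which is why the theorem is stated only in this range.
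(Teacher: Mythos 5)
Your proposal is correct and follows essentially the same route as the paper: show the generators of $E^\ast(X^r)$ lie in $A^\ast_{(0)}(X^r)$ for the (product) MCK decomposition, hence $E^\ast(X^r)\subset A^\ast_{(0)}(X^r)$, then pull back along the finite cover by $E^{4r}$ (a split injection compatible with the bigradings) and invoke Beauville's theorem that $A^i_{(0)}\cap A^i_{hom}=0$ on an abelian variety of dimension $4r$ for $i\ge 4r-1$. The only cosmetic difference is that you obtain the MCK decomposition on $X^r$ directly from Proposition \ref{mck} applied to $E^{4r}/G^r$ rather than via the product stability of MCK decompositions (\cite[Theorem 8.6]{SV}); these give the same decomposition, so nothing changes.
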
 
    
   

  \begin{proof} The product $X^r$ has an MCK decomposition (since $X$ has one, and the property of having an MCK decomposition is stable under taking products \cite[Theorem 8.6]{SV}). Therefore, there is a bigrading on the Chow ring of $X^r$. As we have seen (theorem \ref{main}), 
 $A^1(X)=A^1_{(0)}(X)$ and $A^2(X)=A^2_{(0)}(X)$. Also, it is readily checked that 
   \[\Delta_X\in A^4_{(0)}(X\times X)\ .\] 
   (Indeed, this follows from the fact that
   \[ \Delta_X= \sum_{i=0}^8 \Pi_i^X = \sum_{i=0}^8 \Pi_i^X\circ \Delta_X\circ \Pi_i^X=\sum_{i=0}^8 (\Pi_i^X\times \Pi_{8-i}^X)_\ast \Delta_X\ \ \ \hbox{in}\ A^4(X\times X)\ ,\]
   where we have used the fact that the CK decomposition is self--dual.)
   The fact that $X$ has an MCK decomposition implies that 
     \[ \Delta^X_{sm}\ \ \in A^8_{(0)}(X\times X\times X)\] 
     \cite[Proposition 8.4]{SV}.
     
   Clearly, the pullbacks under the projections $p_i, p_{ij}, p_{ijk}$ respect the bigrading. (Indeed, suppose $a\in A^\ell_{(0)}(X)$, which means $a=(\Pi^X_{2\ell})_\ast (a)$. Then the pullback $(p_i)^\ast(a)$ can be written as 
     \[ X\times \cdots\times X\times (\Pi^X_{2\ell})_\ast (a)\times X\times\cdots\times X\ \ \ \in A^\ell(X^r)\ ,\]
     which is the same as
     \[  (\Pi^X_0\times\cdots \times\Pi^X_0\times\Pi^X_{2\ell}\times \Pi^X_0\times\cdots\times\Pi^X_0)_\ast (X\times \cdots\times X\times a\times X\times\cdots\times X)\ .\]
     This implies that 
       \[  (p_i)^\ast (a)\ \ \ \in (\Pi^{X^r}_{2\ell})_\ast A^\ell(X^r)=A^\ell_{(0)}(X^r)\ ,\]
       where $\Pi^{X^r}_\ast$ is the product CK decomposition. Another way to prove the fact that the projections  $p_i, p_{ij}, p_{ijk}$ respect the bigrading is by invoking  \cite[Corollary 1.6]{SV2}.)
            
   It follows there is an inclusion
     \[ E^\ast(X^r)\ \subset\ A^\ast_{(0)}(X^r)\ .\]
     The finite morphism $p^{\times r}\colon A^r\to X^r$ induces a split injection
     \[ (p^{\times r})^\ast\colon\ \ A^i_{(0)}(X^r)\cap A^i_{hom}(X^r)\ \to\ A^i_{(0)}(A^r)\cap A^i_{hom}(A^r)\ \ \ \hbox{for\ all\ }i.\]
     But the right--hand side is known to be $0$ for $i\ge 4r-1$ \cite{Beau}, and so
     \[  E^i(X^r)\cap   A^i_{hom}(X^r)\ \subset\       A^i_{(0)}(X^r)\cap A^i_{hom}(X^r)=0\ \ \ \hbox{for\ all\ }i\ge 4r-1\ .\] 
    \end{proof}

  \begin{remark}\label{BBk} As is clear from the proof of theorem \ref{main2}, there is a link with Beauville's conjectures for abelian varieties: let $E$ be an elliptic curve, and suppose one knows that
    \[  A^i_{(0)}(E^{4r})\cap A^i_{hom}(E^{4r})=0\ \ \ \hbox{for\ all\ }i\ \hbox{and\ all\ }r\ .\]
    Then conjecture \ref{conjXk} is true for the very special EPW sextic.
      \end{remark}

\subsection{Relation with some hyperk\"ahler fourfolds}
  
  \begin{theorem}\label{main3} Let $X$ be the very special EPW sextic of definition \ref{epw}. Let $X_0$ be one of the hyperk\"ahler fourfolds of \cite[Corollary 6.4]{DBW}, and let $f\colon X_0\to X$ be the generically $2:1$ morphism constructed in \cite{DBG}. Then $X_0$ has an MCK decomposition, and there is an isomorphism
    \[ \begin{split} f^\ast\colon\ \ A^4_{hom}(X)\ &\xrightarrow{\cong}\ A^4_{(4)}(X_0)\ .\\
                             \end{split} \]
   \end{theorem}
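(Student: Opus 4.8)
The plan is to split the statement into three tasks: (a) produce an MCK decomposition on $X_0$; (b) show $f^\ast$ lands in $A^4_{(4)}(X_0)$ and is injective on $A^4_{hom}(X)$; (c) show surjectivity onto $A^4_{(4)}(X_0)$. For (a), note that $X_0$ is a symplectic resolution of $X^\prime=E^4/(G^\prime)$, hence a hyperk\"ahler variety of $K3^{[2]}$-type birational to $S^{[2]}$ (theorem \ref{epw}(\rom2)--(\rom3)); so lemma \ref{hk} (together with \cite[Theorem 13.4]{SV}) gives an MCK decomposition $\{\Pi^{X_0}_i\}$, and this is in fact already recorded in theorem \ref{epw}(\rom3).

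For (b), the key point is to identify $f^\ast$ with a morphism of motives landing in the right graded piece. Since $f\colon X_0\to X$ factors (up to the birational modifications in the diagram of theorem \ref{epw}(\rom2)) through the double cover $g$, and since $X=E^4/G$, $X^\prime=E^4/(G^\prime)$ with $G=(G^\prime,i)$, I would first transport everything to the abelian fourfold $A=E^4$ via the quotient maps and use corollary \ref{symref}, which provides compatible refined idempotents $\Pi^{X}_{2,i}$, $\Pi^{X^\prime}_{2,i}$ coming from the symmetrically distinguished $\Pi^A_{2,i}$, and dually $\Pi^{X}_{6,i}$, $\Pi^{X_0}_{\bullet}$ after passing to the resolution as in proposition \ref{surj}. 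The anti-invariance of $i$ on $H^{4,0}$ (used already in proposition \ref{2}) shows that $H^{4}_{tr}(X)$ maps isomorphically under $f^\ast$ to $H^4_{tr}(X_0)$: indeed $f^\ast H^{4,0}(X)$ is the $(+1)$-eigenspace computation on $X^\prime$, and $X_0\to X^\prime$ induces an iso on holomorphic forms (symplectic resolution), so one gets an isomorphism $H^{4,0}(X)\xrightarrow{\cong}H^{4,0}(X_0)$; by proposition \ref{ghc} the hypotheses of lemma \ref{equiv} hold for both $X$ and $X_0$, so this upgrades to an isomorphism of homological motives $h_{4,0}(X)\cong h_{4,0}(X_0)$, and by finite--dimensionality (theorem \ref{epw}(\rom3)) to an isomorphism of Chow motives. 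Taking Chow groups, $(\Pi^{X_0}_{4,0})_\ast A^4(X_0)\cong (\Pi^X_{4,0})_\ast A^4(X)$, induced by $f^\ast$ (composed with the relevant projectors). It then remains to match $(\Pi^X_{4,0})_\ast A^4(X)$ with $A^4_{hom}(X)$ and $(\Pi^{X_0}_{4,0})_\ast A^4(X_0)$ with $A^4_{(4)}(X_0)$. For $X$: by theorem \ref{main}, $A^4(X)=\QQ\oplus A^4_{hom}(X)$ with $A^4_{hom}(X)=A^4_{(4)}(X)=(\Pi^X_{4,0})_\ast A^4(X)$ (the projectors $\Pi^X_{2i,i}$ with $i\ne 2$ kill $A^4$ by proposition \ref{2} applied in codimension $4$, i.e. via $\Pi^X_6$ and its transpose/symmetry, plus the fact that $\Pi^X_{8,2}$ is supported so as to act trivially outside $A^4_{(0)}$). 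For $X_0$: since $X_0$ is hyperk\"ahler of dimension $4$ with $b_2=23$ and $h^{2,0}=1$, the MCK bigrading satisfies $A^4(X_0)=A^4_{(0)}\oplus A^4_{(2)}\oplus A^4_{(4)}$, and $A^4_{(4)}(X_0)=(\Pi^{X_0}_{4,0})_\ast A^4(X_0)$ by the same $h_{4,0}$-identification; one must also check $f^\ast$ does not hit $A^4_{(2)}(X_0)$, which follows because $f^\ast$ of a codimension-$4$ cycle on $X$ that is homologically trivial lies in $A^4_{hom}(X_0)=A^4_{(2)}(X_0)\oplus A^4_{(4)}(X_0)$, and the $A^4_{(2)}$-component corresponds to $H^{4}(X_0)$ of coniveau $1$ that is not in the image of $f^\ast$ by the Hodge-theoretic computation above (no surviving $(3,1)$-part).

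For (c), surjectivity: here I would invoke proposition \ref{surj}, which gives that $A^4_{(4)}(X_0)$ is generated by products $A^2_{(2)}(X_0)\cdot A^2_{(2)}(X_0)$. Pulling back via $f$, one has $f^\ast A^2(X)\subset A^2_{(0)}(X_0)$ (since $A^2(X)=A^2_{(0)}(X)$ by theorem \ref{main} and $f^\ast$ respects the bigrading); so $f^\ast$ of codimension-$2$ cycles does not directly reach $A^2_{(2)}(X_0)$. Instead I would run the argument entirely on the level of motives: the isomorphism $h_{4,0}(X)\cong h_{4,0}(X_0)$ already established is given by a correspondence supported on $f$ (namely $\Pi^{X_0}_{4,0}\circ {}^t\Gamma_f\circ \Pi^X_{4,0}$ or rather its transpose, using that $f^\ast=({}^t\Gamma_f)_\ast$), and being an isomorphism of Chow motives it is in particular surjective on Chow groups; hence $(\Pi^{X_0}_{4,0})_\ast A^4(X_0)$ equals the image of $(\Pi^X_{4,0})_\ast A^4(X)=A^4_{hom}(X)$ under $f^\ast$. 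Combined with $A^4_{(4)}(X_0)=(\Pi^{X_0}_{4,0})_\ast A^4(X_0)$ from (b), this is precisely the desired surjectivity, and with injectivity from (b) we conclude $f^\ast\colon A^4_{hom}(X)\xrightarrow{\cong} A^4_{(4)}(X_0)$.

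The main obstacle I anticipate is step (b): carefully verifying that the Hodge-theoretic isomorphism $H^{4,0}(X)\cong H^{4,0}(X_0)$ really does promote — via lemma \ref{equiv} and finite-dimensionality — to an isomorphism of the Chow motives $h_{4,0}(X)$ and $h_{4,0}(X_0)$ \emph{compatibly with} $f^\ast$, and then bookkeeping which graded pieces of $A^4$ on each side are hit. The subtle point is ruling out a contribution to $A^4_{(2)}(X_0)$: one needs that the coniveau-$1$ part $N^1 H^4(X_0)/N^2$ is not in the image of $f^\ast$, equivalently that the corresponding sub-Hodge-structure has trivial intersection with $f^\ast H^4(X)$; this should follow from proposition \ref{ghc} (which pins down $N^1_H H^4 = \wt N^1 H^4$ for both $X$ and $X_0$) together with a direct comparison of the Hodge numbers of $H^4(X)$ and $H^4(X_0)$ using that $S$ has Picard number $20$. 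Once the motivic isomorphism is in place and these cohomological identifications are checked, (a) and (c) are essentially formal given propositions \ref{mck}, \ref{2}, \ref{surj} and theorem \ref{main}.
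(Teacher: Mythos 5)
Your proposal follows the paper's proof essentially verbatim: the MCK decomposition is taken from theorem \ref{epw}, the isomorphism $f^\ast$ on $H^{4,0}$ is upgraded via proposition \ref{ghc}, lemma \ref{equiv} and finite--dimensionality to an isomorphism of Chow motives $h_{4,0}(X)\cong h_{4,0}(X_0)$, and taking $A^4$ (together with the vanishing of $(\Pi_{4,i})_\ast A^4$ for $i\ge 1$ for dimension reasons) identifies the two sides with $A^4_{hom}(X)$ and $A^4_{(4)}(X_0)$. Your detour through proposition \ref{surj} for surjectivity is unnecessary, as you yourself note --- the motivic isomorphism already gives bijectivity on Chow groups --- and this is exactly how the paper concludes.
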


  \begin{proof} The MCK decomposition for $X_0$ was established in theorem \ref{epw}.
  The morphism $f\colon X_0\to X$ of \cite{DBG} is constructed as a composition
   \[ f\colon\ \ X_0\ \xrightarrow{\phi}\ X^\prime:=E^4/(G^\prime)\ \xrightarrow{ g}\ X\ ,\]
    where $\phi$ is a symplectic resolution and $g$ is the double cover associated to an anti--symplectic involution.
  This implies $f$ induces an isomorphism
   \[ f^\ast\colon\ \ H^{4,0}(X)\ \xrightarrow{\cong}\ H^{4,0}(X^\prime)\ \xrightarrow{\cong}\ H^{4,0}(X_0)\ .\]
   In view of the strong form of the generalized Hodge conjecture (proposition \ref{ghc}), $X_0$ and $X^\prime$ and $X$ verify the hypotheses of lemma \ref{equiv}. Applying lemma \ref{equiv}, we find isomorphisms of Chow motives
   \[ {}^t \Gamma_f\colon\ \ h_{4,0}(X)\ \xrightarrow{\cong}\ h_{4,0}(X^\prime) \ \xrightarrow{\cong}\ h_{4,0}(X_0)\ \ \ \hbox{in}\ \MM_{\rm rat}\ .\] 
   Since $(\Pi^X_{4,i})_\ast A^4(X)=0$ for $i\ge 1$ for dimension reasons, we have 
    \[  (\Pi^X_4)_\ast A^4(X)= (\Pi^X_{4,0})_\ast A^4(X) \ ,\]
    and the same goes for $X^\prime$ and $X_0$. It follows that
    \[ f^\ast\colon   A^4_{hom}(X)= A^4(  h_{4,0}(X))\ \xrightarrow{\cong}\ A^4(h_{4,0}(X_0))=:A^4_{(4)}(X_0)\ .\]    
     \end{proof}

  As a corollary, we obtain an alternative description of the splitting $A^\ast_{(\ast)}(X_0)$ for the hyperk\"ahler fourfolds $X_0$: 
   
    \begin{corollary}\label{XX0} Let $f\colon X_0\to X$ be as in theorem \ref{main3}. The splitting $A^\ast_{(\ast)}(X_0)$ (given by the 
   MCK decomposition of $X_0$) verifies
     \[  \begin{split}  A^4(X_0)&= A^4_{(4)}(X_0)\oplus A^4_{(2)}(X_0)\oplus A^4_{(0)}(X_0)\\
                       &= f^\ast A^4_{hom}(X)\oplus \ker\bigl( A^4(X_0)\xrightarrow{f_\ast} A^4(X)\bigr)\oplus \QQ\ ;\\
                                  A^3(X_0)&=A^3_{(2)}(X_0)\oplus A^3_{(0)}(X_0)\\
                                  &= A^3_{hom}(X_0)\oplus H^{3,3}(X_0)\ ;\\
                                  A^2(X_0)&= A^2_{(2)}(X_0)\oplus A^2_{(0)}(X_0)\\
                                  &=\ker\bigl( A^2_{hom}(X_0)\xrightarrow{f_\ast} A^2(X)\bigr) \oplus  A^2_{(0)}(X_0)\ .\\
                               \end{split}\]   
         \end{corollary}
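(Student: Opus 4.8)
The strategy is to combine the isomorphism $f^\ast\colon A^4_{hom}(X)\xrightarrow{\cong}A^4_{(4)}(X_0)$ from Theorem \ref{main3} with standard projection-formula computations relating $f_\ast$ and $f^\ast$, and then to identify each graded piece $A^i_{(j)}(X_0)$ with a concrete subgroup. Since $f=g\circ\phi$ with $\phi$ a symplectic resolution (inducing isomorphisms on Chow groups up to the pieces that matter, via birational invariance of the MCK-bigrading, cf. Riess \cite{Rie} and Lemma \ref{hk}) and $g$ a degree-$2$ cover associated to an anti-symplectic involution $\tau$, one has $f_\ast f^\ast=2\cdot\mathrm{id}$ on $A^\ast(X)$, and $A^\ast(X_0)$ decomposes (rationally) into the $\pm 1$-eigenspaces of $\tau^\ast$. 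The $(+1)$-eigenspace is $f^\ast A^\ast(X)$, and the $(-1)$-eigenspace is exactly $\ker(f_\ast)$.

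First I would record the MCK decomposition for $X_0$ (already in Theorem \ref{epw}), so that the bigrading $A^\ast_{(\ast)}(X_0)$ is defined, and observe that $f$ is compatible with the bigradings: indeed $\phi$ respects the bigrading since birational hyperk\"ahler varieties have isomorphic bigraded Chow rings, and $g^\ast$ respects the bigrading on $A^\ast_{(\ast)}(X^\prime)$ (which exists by Proposition \ref{mck}) because $\tau$ is induced by a group automorphism and hence, by the symmetrically-distinguished-cycle argument of Lemma \ref{comm}/Lemma \ref{sym2}, commutes with the relevant projectors. Next I would analyze each codimension separately. In codimension $4$: for dimension reasons $A^4(X_0)=A^4_{(4)}\oplus A^4_{(2)}\oplus A^4_{(0)}$, with $A^4_{(0)}(X_0)=\QQ$ (spanned by a point) and $A^4_{(4)}(X_0)=A^4(h_{4,0}(X_0))=f^\ast A^4_{hom}(X)$ by Theorem \ref{main3}. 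It remains to identify $A^4_{(2)}(X_0)$ with $\ker(f_\ast\colon A^4(X_0)\to A^4(X))$: the piece $A^4_{(2)}(X_0)$ is the part of $A^4_{hom}(X_0)$ killed by $(\Pi^{X_0}_{4,0})_\ast$, i.e. mapping to $0$ in $H^{4,0}$, which — using $f_\ast f^\ast=2\,\mathrm{id}$ and the isomorphism $f^\ast\colon h_{4,0}(X)\xrightarrow{\cong}h_{4,0}(X_0)$ — is precisely $\ker(f_\ast)$ on $A^4$. In codimension $3$: $A^3(X_0)=A^3_{(2)}(X_0)\oplus A^3_{(0)}(X_0)$ because $A^3_{(j)}=0$ for $j\ne 0,2$ (using the corresponding vanishing for the abelian model $E^4$, cf. the proof of Theorem \ref{main}, transported through $\phi$ and $g$, or directly the vanishing of $A^3_{(1)},A^3_{(3)}$ for $K3^{[2]}$-type fourfolds as in \cite{SV}); one then identifies $A^3_{(0)}(X_0)=H^{3,3}(X_0)\cap A^3(X_0)$, i.e. the image of cycle-class-injective part, and $A^3_{(2)}(X_0)=A^3_{hom}(X_0)$. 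In codimension $2$: similarly $A^2(X_0)=A^2_{(2)}(X_0)\oplus A^2_{(0)}(X_0)$ with $A^2_{(2)}(X_0)\subset A^2_{hom}(X_0)$ the part mapping to $0$ in $H^{2,0}$; since $g^\ast H^{2,0}(X^\prime)=0$ (as $\tau$ is anti-symplectic) one gets $f^\ast H^{2,0}(X)=0$ hence $f_\ast$ sends $A^2_{(2)}(X_0)$ isomorphically detects the $(-1)$-eigenspace, giving $A^2_{(2)}(X_0)=\ker(f_\ast\colon A^2_{hom}(X_0)\to A^2(X))$.

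The main obstacle will be the bookkeeping in the middle piece $A^4_{(2)}(X_0)$: proving that the MCK piece indexed by $(j)=2$ coincides exactly with $\ker(f_\ast)$ (and not merely is contained in it) requires knowing that $A^4_{hom}(X_0)=A^4_{(4)}(X_0)\oplus A^4_{(2)}(X_0)$ with no contribution from other weights, which rests on the structure of the bigrading for $K3^{[2]}$-type fourfolds (Beauville-type vanishings $A^4_{(j)}=0$ for $j$ odd, available via \cite{SV} together with finite-dimensionality from Theorem \ref{epw}) and on matching the eigenspace decomposition under $\tau^\ast$ with the weight filtration. Once those vanishings are in hand, the three displayed equalities follow by linear algebra from $f_\ast f^\ast = 2\,\mathrm{id}$, the isomorphism of Theorem \ref{main3}, and the compatibility of $f$ with the bigradings; I would present this final assembly as a short diagram chase rather than a computation.
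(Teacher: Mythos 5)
Your overall strategy---deduce everything from Theorem \ref{main3}, the compatibility of $\Gamma_f$ with the Chow--K\"unneth projectors, and the structure results already available for $E^4$ and $S^{[2]}$---is the right one; the paper itself states the corollary with no separate proof, as an immediate consequence of Theorem \ref{main3} together with Theorem \ref{main}. However, one ingredient of your plan is not available as stated: the $\pm 1$--eigenspace decomposition of $A^\ast(X_0)$ under $\tau^\ast$. The anti--symplectic involution acts on $X^\prime=E^4/(G^\prime)$, not on the symplectic resolution $X_0$, and $\phi^\ast\colon A^\ast(X^\prime)\to A^\ast(X_0)$ is far from surjective (the exceptional locus of $\phi$ contributes to $A^\ast(X_0)$), so the eigenspace decomposition of $A^\ast(X^\prime)$ cannot simply be transported to $X_0$. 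What replaces it is the relation $\Gamma_f\circ \Pi^{X_0}_{2j-i}=\Pi^X_{2j-i}\circ\Gamma_f$ (proved by the usual symmetrically--distinguished plus finite--dimensionality argument), which yields $f_\ast A^j_{(2)}(X_0)\subset A^j_{(2)}(X)=0$ directly from Theorem \ref{main}; together with $f_\ast f^\ast=2\,\mathrm{id}$ this is all that is needed in codimension $4$.

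The genuine gap is in codimension $2$ (and, to a lesser extent, $3$). You only establish $A^2_{(2)}(X_0)\subset\ker\bigl(A^2_{hom}(X_0)\xrightarrow{f_\ast}A^2(X)\bigr)$; the reverse inclusion requires $f_\ast$ to be \emph{injective} on $A^2_{(0)}(X_0)\cap A^2_{hom}(X_0)$, and the sentence in your proposal meant to address this is not an argument. The point is not vacuous: the paper explicitly states it cannot prove $A^2_{(0)}(X_0)\cap A^2_{hom}(X_0)=0$, so this group may be nonzero. The injectivity does hold, but one must see it through the refined decomposition $\Pi^{X_0}_4=\Pi^{X_0}_{4,0}+\Pi^{X_0}_{4,1}+\Pi^{X_0}_{4,2}$ of Theorem \ref{Pi_2}: the summands $(\Pi^{X_0}_{4,1})_\ast A^2(X_0)$ and $(\Pi^{X_0}_{4,2})_\ast A^2(X_0)$ meet $A^2_{hom}(X_0)$ trivially (these motives are summands of $\bigoplus_j h(\wt S_j)(1)$ resp.\ of sums of Lefschetz motives, so their $A^2$ is a summand of $\bigoplus_j A^1(\wt S_j)$ resp.\ injects into cohomology), whence $A^2_{(0)}(X_0)\cap A^2_{hom}(X_0)\subset A^2(h_{4,0}(X_0))$, where $f_\ast$ is invertible up to the factor $2$ by the isomorphism $h_{4,0}(X)\cong h_{4,0}(X_0)$ furnished by Lemma \ref{equiv} and Proposition \ref{ghc}. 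Similarly, in codimension $3$ the equalities $A^3_{(2)}(X_0)=A^3_{hom}(X_0)$ and $A^3_{(0)}(X_0)=H^{3,3}(X_0)$ need both $A^3_{(0)}(X_0)\cap A^3_{hom}(X_0)=0$ and the surjectivity of the cycle class map onto the Hodge classes of $H^6(X_0)$; both hold here (via Rie\ss\ and the explicit description of $h(S^{[2]})$ for the $\rho$--maximal $S$, resp.\ Beauville's results on $E^4$), but you only gesture at them, and they are exactly where the hypothesis that $S$ has Picard number $20$ and that everything is of abelian type gets used.
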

     
\begin{remark} Just as we noted for the EPW sextic $X$ (remark \ref{BB}), for this filtration to be of Bloch--Beilinson type one would need to prove that
  \[ A^2_{(0)}(X_0) \cap A^2_{hom}(X_0)\stackrel{??}{=}0\ ,\]
  which I cannot prove. This situation is similar to that of the Fano varieties $F$ of lines on a very general cubic fourfold: thanks to work of Shen--Vial \cite{SV} there is a multiplicative bigrading
  $A^\ast_{(\ast)}(F)$ which has many good properties and interesting alternative descriptions. The main open problem is to prove that
    \[ A^2_{(0)}(F)\cap A^2_{hom}(F)\stackrel{??}{=}0\ ,\]
    which doesn't seem to be known for any single $F$.
\end{remark}


\begin{remark} Conjecturally, the relations of corollary \ref{XX0} should hold for any double EPW sextic $X_0$ (with $X$ being the quotient of $X_0$ under the anti--symplectic involution). However, short of knowing $X_0$ has finite--dimensional motive (as is the case here, thanks to the presence of the abelian variety $E^4$), this seems difficult to prove. 
Note that at least, for a general double EPW sextic $X_0$, the relations of corollary \ref{XX0} give a concrete description of a filtration on $A^\ast(X_0)$ that should be the Bloch--Beilinson filtration.
\end{remark}

\section{Further results}
\label{secf}

\subsection{Bloch conjecture}
\label{ssb} 

\begin{conjecture}[Bloch \cite{B}]\label{CB} Let $X$ be a smooth projective variety of dimension $n$. Let $\Gamma\in A^n(X\times X)$ be a correspondence such that
  \[ \Gamma_\ast=0\colon\ \ \ H^{p,0}(X)\ \to\ H^{p,0}(X)\ \ \ \hbox{for\ all\ }p>0\ .\]
  Then
  \[ \Gamma_\ast=0\colon\ \ \ A^n_{hom}(X)\ \to\ A^n_{hom}(X)\ .\]
\end{conjecture}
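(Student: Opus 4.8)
The statement is (a form of) Bloch's conjecture, which is wide open in general; accordingly what follows is a conditional strategy rather than an unconditional proof, organized so as to isolate precisely what makes the conjecture hard. The plan is to reduce the action of $\Gamma$ on $A^n_{hom}(X)$ (the homologically trivial $0$--cycles) to its action on the ``transcendental, coniveau--zero part'' of the motive of $X$, to show that the Hodge--theoretic hypothesis forces this action to be trivial on cohomology, and finally to try to upgrade triviality on cohomology to triviality on Chow groups. The genuinely difficult point will be this last upgrade.

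First I would grant the package of conjectures that makes the motivic machinery available: that $X$ has finite--dimensional motive, that the Lefschetz standard conjecture $B(X)$ holds (so that a Chow--K\"unneth decomposition exists), and that the generalized Hodge conjecture together with Vial's conjecture $N^\ast_H=\wt{N}^\ast$ holds for $X$. Granting these, one disposes of a refined Chow--K\"unneth decomposition $\{\Pi_{i,j}\}$ in every degree, in the spirit of theorems \ref{pi_2} and \ref{Pi_2}, and the action of $\Gamma$ on $A^n(X)$ respects the resulting bigrading up to correspondences that are homologically trivial, hence (by finite--dimensionality and theorem \ref{nilp}) nilpotent. Only the coniveau--zero transcendental pieces $\mathrm{Gr}^0_{\wt{N}}H^p(X)$ can contribute to $A^n_{hom}(X)$. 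Under the generalized Hodge conjecture every simple sub--Hodge--structure of such a piece has nonzero $(p,0)$--part, since a simple factor with vanishing $(p,0)$--part would have positive Hodge coniveau, contradicting coniveau zero. Because $\Gamma_\ast$ is a morphism of Hodge structures commuting with complex conjugation, the hypothesis $\Gamma_\ast=0$ on $H^{p,0}(X)$ for all $p>0$ then places the full $(p,0)$--part of $\mathrm{Gr}^0_{\wt{N}}H^p(X)$ in $\ker(\Gamma_\ast)$; as $\ker(\Gamma_\ast)$ is a sub--Hodge--structure and its orthogonal complement would have zero $(p,0)$--part, that complement must vanish, so $\Gamma_\ast=0$ on all of $\mathrm{Gr}^0_{\wt{N}}H^p(X)$.

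In motivic terms, writing $h^{\mathrm{tr}}(X):=\bigoplus_p h_{p,0}(X)$ for the direct sum of these transcendental summands and $\Pi^{\mathrm{tr}}$ for the corresponding projector, the previous step shows that $\Gamma^{\mathrm{tr}}:=\Pi^{\mathrm{tr}}\circ\Gamma\circ\Pi^{\mathrm{tr}}$ is zero in $\mathrm{End}_{\MM_{\rm hom}}\bigl(h^{\mathrm{tr}}(X)\bigr)$, while $\Gamma^{\mathrm{tr}}$ is the only part of $\Gamma$ acting nontrivially on $A^n_{hom}(X)=A^n\bigl(h^{\mathrm{tr}}(X)\bigr)$. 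Thus everything comes down to showing that a homologically trivial self--correspondence of the transcendental motive acts as zero on its $0$--cycles.

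The hard part is exactly this final step, and it is here that the conjecture escapes the reach of finite--dimensionality. Finite--dimensionality yields, via theorem \ref{nilp}, only that $\Gamma^{\mathrm{tr}}$ is \emph{nilpotent}, so that $(\Gamma^{\mathrm{tr}})^{\circ N}$ acts as zero on $A^n_{hom}(X)$ for some $N$; this is strictly weaker than the required vanishing of the first power. What one actually needs is the injectivity of
  \[ \mathrm{End}_{\MM_{\rm rat}}\bigl(h^{\mathrm{tr}}(X)\bigr)\ \to\ \mathrm{End}_{\MM_{\rm hom}}\bigl(h^{\mathrm{tr}}(X)\bigr)\ , \]
a manifestation of the Bloch--Beilinson conjecture (semisimplicity, or separatedness of the conjectural filtration). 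The contrast with the classical surface case is instructive: when $X$ carries no coniveau--zero transcendental cohomology at all (for instance a surface with $p_g=0$, under the Hodge conjecture), the motive $h^{\mathrm{tr}}(X)$ is homologically zero, hence Chow--trivial by finite--dimensionality, and the first power vanishes for free; the difficulty in general is precisely that for an arbitrary $\Gamma$ on a variety with holomorphic forms one has $h^{\mathrm{tr}}(X)\neq 0$ and cannot trade nilpotence for vanishing. This is the true obstacle, and its unavailability is why the statement is recorded here as a conjecture rather than a theorem.
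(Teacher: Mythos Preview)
The statement you were asked to prove is recorded in the paper as a \emph{conjecture}, not a theorem; there is no proof of it in the paper, and you correctly recognise this. Your conditional analysis is sound and in fact lands exactly where the paper's own partial result does: the proposition immediately following the conjecture proves, for the very special EPW sextic $X$, only the nilpotence conclusion $(\Gamma^{\circ N})_\ast=0$ on $A^4_{hom}(X)$, using finite--dimensionality and the refined CK projectors in precisely the way you sketch. Your identification of the obstacle---that finite--dimensionality upgrades homological triviality only to nilpotence, not to vanishing of the first power---is exactly the point.

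Where the paper goes further than your discussion is in the \emph{second} proposition of that subsection: for correspondences $\Gamma$ of the special form $\sum c_i\Gamma_{\sigma_i}$ with each $\sigma_i$ induced by a group homomorphism of $E^4$, the paper does obtain the honest first--power vanishing $\Gamma^\ast=0$ on $A^4_{hom}(X)$. The mechanism there is not an abstract semisimplicity statement of the kind you invoke, but the concrete theory of symmetrically distinguished cycles on abelian varieties (theorem \ref{os}): one lifts the homological vanishing to $E^4\times E^4$, observes that the relevant cycle is symmetrically distinguished, and concludes rational triviality directly. This bypasses the nilpotence-versus-vanishing gap entirely, but only for this restricted class of correspondences. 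Your general conditional strategy does not see this special mechanism, though nothing in your write-up is wrong.
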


A weak version of conjecture \ref{CB} is true for the very special EPW sextic:

\begin{proposition} Let $X$ be the very special EPW sextic. Let $\Gamma\in A^4(X\times X)$ be a correspondence such that
  \[ \Gamma_\ast=0\colon\ \ \ H^{4,0}(X)\ \to\ H^{4,0}(X)\ .\]
  Then there exists $N\in\NN$ such that
   \[ (\Gamma^{\circ N})_\ast=0\colon\ \ \ A^4_{hom}(X)\ \to\ A^4_{hom}(X)\ .\]
  \end{proposition}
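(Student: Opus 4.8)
The plan is to reduce the statement to the corresponding weak Bloch statement for the abelian variety $E^4$, where finite-dimensionality makes everything work. First I would recall from theorem \ref{epw} that $X=E^4/G$ is a projective quotient variety with $G\subset\hbox{Aut}_{\ZZ}(E^4)$, so $X$ has finite--dimensional motive. Write $p\colon A:=E^4\to X$ for the quotient morphism and $d=\vert G\vert$. Given $\Gamma\in A^4(X\times X)$ with $\Gamma_\ast=0$ on $H^{4,0}(X)$, I lift it to $\Gamma_A:={}^t\Gamma_p\circ\Gamma\circ\Gamma_p\in A^4(A\times A)$. As in proposition \ref{quotientnilp}, this is $G\times G$--invariant, so $\Gamma_A\in\hbox{End}_{\MM_{\rm rat}}(h(A)^G)$. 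The key point is that $\Gamma_A$ acts as zero on $H^{4,0}(A)^G=H^{4,0}(X)$ and (since $X$ has only even cohomology, and $H^{p,0}(X)=0$ for $p$ odd, while $H^{2,0}(X)=H^{3,0}(X)=0$ because $X$ is a Calabi--Yau fourfold with $H^0(X,\Omega^2_X)=0$) the only nonzero $H^{p,0}(X)$ with $p>0$ is $p=4$. Hence $(\Gamma_A)_\ast=0$ on $H^{p,0}(A)^G$ for all $p>0$.

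Next I would invoke Bloch's conjecture for $A=E^4$, which is \emph{known} because $A$ is an abelian variety (this is the classical case: for abelian varieties the Bloch conjecture in the form "$\Gamma_\ast=0$ on $H^{p,0}$ for all $p>0$ implies $\Gamma_\ast=0$ on $A^n_{AJ}$" is a theorem, and combined with finite--dimensionality one gets nilpotence). More precisely, since $h(A)^G$ is finite--dimensional and $(\Gamma_A)_\ast$ kills all $H^{p,0}(A)^G$, a standard argument (cf. the proof of proposition \ref{quotientnilp}, or Kimura's nilpotence theorem \ref{nilp}) shows that the restriction of $\Gamma_A$ to the submotive $h_{8,4}(A)^G$ (the part carrying $H^{4,0}$) is numerically trivial after suitable projection, hence nilpotent. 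One concludes there is $N\in\NN$ with $(\Gamma_A^{\circ N})_\ast=0$ on $A^4_{hom}(A)^G$; here one uses that $A^4_{hom}(A^r)$ for an abelian variety is controlled by the pieces $A^4_{(j)}$ with $j\ge 1$, and that $\Gamma_A$ preserves the Beauville decomposition up to the homological filtration.

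Then I transfer back to $X$: using $\Gamma_p\circ{}^t\Gamma_p=d\Delta_X$ and $\Gamma_A^{\circ N}={}^t\Gamma_p\circ\Gamma^{\circ N}\circ\Gamma_p$ (after the usual telescoping with the factors of $d$, exactly as in proposition \ref{quotientnilp}), one gets $(\Gamma^{\circ N})_\ast = \tfrac1{d^{?}}(\Gamma_p)_\ast(\Gamma_A^{\circ N})_\ast(p^\ast(-))$ on Chow groups. Since $p^\ast\colon A^4_{hom}(X)\to A^4_{hom}(A)$ lands in the $G$--invariant part and is split injective, and $(\Gamma_A^{\circ N})_\ast$ vanishes there, we conclude $(\Gamma^{\circ N})_\ast=0$ on $A^4_{hom}(X)$, possibly after enlarging $N$.

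The main obstacle I anticipate is the precise bookkeeping of what "Bloch's conjecture for $E^4$ plus finite--dimensionality" yields at the level of rational equivalence: one must be careful that killing $H^{p,0}$ only controls the "bottom" Abel--Jacobi-type piece, so to get nilpotence of $\Gamma_A$ acting on all of $A^4_{hom}(A)$ one needs the full Beauville--type decomposition of $A^4(E^4)$ together with the fact that $\Gamma_A$ respects the niveau/coniveau structure after composing with the refined CK projectors $\Pi^A_{i,j}$ of theorem \ref{Pi_2} (available since $\dim A=4\le 5$). Organizing this via the refined CK decomposition of $A$ --- decomposing $\hbox{id}=\sum_{i,j}\Pi^A_{i,j}$, observing $\Gamma_A$ commutes with the relevant projectors up to homologically trivial (hence nilpotent) correction, and checking that on each graded piece $\Gamma_A$ is either zero for weight reasons or numerically trivial by the $H^{p,0}$ hypothesis --- is where the real work lies; the descent to $X$ afterwards is purely formal.
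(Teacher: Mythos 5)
There is a genuine gap, and it sits exactly at the place you flag as ``where the real work lies'' but then do not carry out. Your plan is: lift $\Gamma$ to $\Gamma_A$ on $A=E^4$, decompose via the refined projectors $\Pi^A_{i,j}$, and argue that on each graded piece $\Gamma_A$ is ``numerically trivial by the $H^{p,0}$ hypothesis''. But the hypothesis only controls the Hodge components $H^{4,0}$ and $H^{0,4}$ of the bottom graded piece $gr^0_{\wt{N}}H^4=H^4_{tr}$; it does not by itself imply that $\Gamma\circ\Pi_{4,0}$ (or the corresponding composition on $A$) vanishes in cohomology. What one actually gets from $\Gamma_\ast=0$ on $H^{4,0}$ is that $\Gamma_\ast(H^4)$ is a sub--Hodge structure of Hodge coniveau $\ge 1$, i.e. lands in $N^1_H H^4$. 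To conclude that $\Gamma_\ast$ kills $H^4/\wt{N}^1$ --- which is what the nilpotence argument needs --- one must know the strong form of the generalized Hodge conjecture $N^1_H H^4=\wt{N}^1 H^4$ for the varieties in play. This is proposition \ref{ghc} of the paper, and it is a genuinely geometric input: it is proved by passing to $S^{[2]}$ for Vinberg's $\rho$--maximal $K3$ surface $S$ and invoking the Hodge conjecture for powers of abelian surfaces. Nothing in your proposal supplies this step, and without it the phrase ``numerically trivial by the $H^{p,0}$ hypothesis'' is unjustified: a priori $H^4_{tr}$ could contain coniveau--one classes not accounted for by correspondences from lower--dimensional varieties, and then $\Gamma$ could act nontrivially (even homologically) on $gr^0_{\wt{N}}H^4$ while still killing $H^{4,0}$.

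Two further points. First, your appeal to ``Bloch's conjecture for abelian varieties'' as a known classical theorem is incorrect: in the form you state (a correspondence killing all $H^{p,0}$, $p>0$, acts as zero on $A^n_{AJ}$ or $A^n_{hom}$) it is open for abelian varieties of dimension $\ge 3$; what is available is O'Sullivan's theorem for symmetrically distinguished cycles, and your $\Gamma_A={}^t\Gamma_p\circ\Gamma\circ\Gamma_p$ has no reason to be symmetrically distinguished for an arbitrary $\Gamma$. Fortunately the statement to be proved only asks for nilpotence, so this stronger (unavailable) input is not needed. Second, the detour through $A=E^4$ is unnecessary: $X$ itself has finite--dimensional motive and carries the refined Chow--K\"unneth projectors $\Pi^X_{k,\ell}$, so once one has $\Gamma\circ\Pi^X_{4,0}=0$ in $H^8(X\times X)$ (via proposition \ref{ghc}), Kimura nilpotence applied to $\Gamma-\sum_{(k,\ell)\neq(4,0)}\Gamma\circ\Pi^X_{k,\ell}$ plus the fact that $(\Pi^X_{k,\ell})_\ast A^4_{hom}(X)=0$ for $(k,\ell)\neq(4,0)$ finishes the proof directly on $X$, exactly as in the paper. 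The descent/ascent bookkeeping you describe is harmless but adds nothing; the missing GHC step is the real issue.
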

  
\begin{proof} As is well--known, this follows from the fact that $X$ has finite--dimensional motive; we include a proof for completeness' sake.

By assumption, we have
  \[    \Gamma_\ast=0\colon\ \ \ H^{4}(X,\C)/F^1\ \to\ H^{4}(X,\C)/F^1\ \]
  (where $F^\ast$ is the Hodge filtration). Thanks to the ``strong form of the generalized Hodge conjecture'' (proposition \ref{ghc}), this implies that also
  \[     \Gamma_\ast=0\colon\ \ \ H^{4}(X,\QQ)/\wt{N}^1\ \to\ H^{4}(X,\QQ)/\wt{N}^1\ .\]
  Using Vial's refined CK projectors (theorem \ref{Pi_2}), this means 
  \[ \Gamma\circ \Pi^X_{4,0}=0\ \ \ \hbox{in}\ H^8(X\times X)\ ,\]
  or, equivalently,
  \[ \Gamma - \sum_{(k,\ell)\not=(4,0)} \Gamma\circ \Pi^X_{k,\ell}=0 \ \ \ \hbox{in}\ H^8(X\times X)\ .\]  
  By finite--dimensionality, this implies there exists $N\in\NN$ such that
  \[ \Bigl(\Gamma - \sum_{(k,\ell)\not=(4,0)} \Gamma\circ \Pi^X_{k,\ell}\Bigr)^{\circ N}=0 \ \ \ \hbox{in}\ A^4(X\times X)\ .\]   
  Upon developing, this gives an equality
  \begin{equation}\label{devel} \Gamma^{\circ N}=Q_1+\cdots +Q_N\ \ \ \hbox{in}\ A^4(X\times X)\ ,\end{equation}     
  where each $Q_j$ is a composition of correspondences
   \[ Q_j=Q_j^1\circ Q_j^2\circ\cdots\circ Q_j^r\ \ \ \in A^4(X\times X)\ ,\]
   and for each $j$, at least one $Q_j^i$ is equal to $\Pi^X_{k,\ell}$ with $(k,\ell)\not=(4,0)$.
   Since (for dimension reasons)
    \[  (\Pi^X_{k,\ell})_\ast A^4_{hom}(X)=0\ \ \ \hbox{for\ all\ }(k,\ell)\not=(4,0)\ ,\]
    it follows that
    \[ (Q_j)_\ast A^4_{hom}(X)=0\ \ \ \hbox{for\ all\ }j\ .\]
    In view of equality (\ref{devel}), we thus have
    \[ (\Gamma^{\circ N})_\ast=0\colon\ \ \ A^4_{hom}(X)\ \to\ A^4_{hom}(X)\ .\]
   \end{proof}

For special correspondences, one can do better:

\begin{proposition} Let $X$ be the very special EPW sextic. Let $\Gamma\in A^4(X\times X)$ be a correspondence such that
  \[ \Gamma^\ast =0\colon\ \ H^{4,0}(X)\ \to\ H^{4,0}(X)\ .\]
  Assume moreover that $\Gamma$ can be written as
  \[ \Gamma={\displaystyle\sum_{i=1}^r}c_i \Gamma_{\sigma_i}\ \ \ \hbox{in}\ A^4(X\times X)\ ,\]
  with $c_i\in\QQ$ and $\sigma_i\in\hbox{Aut}(X)$
   induced by a $G$--equivariant automorphism $\sigma_i^E\colon E^4\to E^4$, where $X=E^4/(G)$ and $\sigma_i^E$ is a group homomorphism.
  Then
  \[ \Gamma^\ast=0\colon\ \ A^4_{hom}(X)\ \to\ A^4_{hom}(X)\ .\]
  \end{proposition}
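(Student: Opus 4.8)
The plan is to run the argument of the preceding proposition, but to replace the finite--dimensionality step (which only yields vanishing after passing to some power $\Gamma^{\circ N}$) by O'Sullivan's theory of symmetrically distinguished cycles, which gives it on the nose. Replacing $\Gamma$ by ${}^t\Gamma=\sum_i c_i\,\Gamma_{\sigma_i^{-1}}$, which is again of the prescribed form, it suffices to prove $\Gamma_\ast=0$ on $A^4_{hom}(X)$ under the hypothesis $\Gamma_\ast=0$ on $H^{4,0}(X)$. First I would reduce this to a vanishing of correspondences. Write $\Pi^X_{k,\ell}$ for Vial's refined Chow--K\"unneth projectors (theorem \ref{Pi_2}): by theorem \ref{main}, $A^4(X)=\QQ\oplus A^4_{hom}(X)$, and — as in the proof of theorem \ref{main3} — $(\Pi^X_{4,i})_\ast A^4(X)=0$ for $i\ge1$ (the motive $h_{4,i}(X)$, $i\ge1$, has cohomology of positive niveau and supports no $0$--cycles), so $A^4_{hom}(X)=(\Pi^X_{4,0})_\ast A^4(X)$; hence, by orthogonality of the refined projectors, $(\Pi^X_{k,\ell})_\ast z=(\Pi^X_{k,\ell}\circ\Pi^X_{4,0})_\ast z=0$ for all $z\in A^4_{hom}(X)$ and $(k,\ell)\ne(4,0)$. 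Consequently, if I can prove $\Gamma\circ\Pi^X_{4,0}=0$ in $A^8(X\times X)$, then $\Delta_X=\sum_{k,\ell}\Pi^X_{k,\ell}$ gives $\Gamma=\sum_{(k,\ell)\ne(4,0)}\Gamma\circ\Pi^X_{k,\ell}$ and hence $\Gamma_\ast=0$ on $A^4_{hom}(X)$. The \emph{cohomological} counterpart $\Gamma\circ\Pi^X_{4,0}=0$ in $H^8(X\times X)$ holds verbatim as in the preceding proposition (the hypothesis forces $\Gamma_\ast=0$ on $H^4(X,\C)/F^1$, hence — by proposition \ref{ghc} — on $H^4(X,\QQ)/\wt{N}^1$, hence $\Gamma\circ\pi^X_{4,0}=0$ in $H^8(X\times X)$ by theorem \ref{pi_2}; and $\Pi^X_{4,0}=\pi^X_{4,0}$ cohomologically). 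So everything comes down to upgrading this to a rational identity.

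To do that I would lift to the abelian variety. Write $X=E^4/G$, $p\colon E^4\to X$, $d=\vert G\vert$, and $\Gamma_A:=\sum_i c_i\,\Gamma_{\sigma_i^E}\in A^4(E^4\times E^4)$. Since $\Gamma_{\sigma_i^E}=(\hbox{id}\times\sigma_i^E)_\ast[E^4]$ is symmetrically distinguished (theorem \ref{os}: stability under push--forward along a homomorphism of abelian varieties), $\Gamma_A$ is symmetrically distinguished. Choosing the refined Chow--K\"unneth decomposition of $E^4$ to consist of symmetrically distinguished cycles (for $\Pi_2$ this is lemma \ref{sym}(\rom2)), the projector $\Pi^{E^4}_{4,0}$ commutes with every $\Gamma_g$, $g\in G$ — the commutator $\Pi^{E^4}_{4,0}\circ\Gamma_g-\Gamma_g\circ\Pi^{E^4}_{4,0}$ is symmetrically distinguished and homologically trivial (each $g$ respects the niveau filtration), hence $0$ — so $\Pi^X_{4,0}:={1\over d}\Gamma_p\circ\Pi^{E^4}_{4,0}\circ{}^t\Gamma_p$ is an idempotent lifting $\pi^X_{4,0}$ (as in claim \ref{sym1} and corollary \ref{symref}). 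Using $\Gamma_{\sigma_i}\circ\Gamma_p=\Gamma_p\circ\Gamma_{\sigma_i^E}$ together with $\Gamma_p\circ\Delta^G_{E^4}=\Gamma_p$ and $\Delta^G_{E^4}\circ{}^t\Gamma_p={}^t\Gamma_p$, where $\Delta^G_{E^4}={1\over d}\sum_{g\in G}\Gamma_g$, one obtains
  \[ \Gamma\circ\Pi^X_{4,0}={1\over d}\,\Gamma_p\circ\bigl(\Delta^G_{E^4}\circ\Gamma_A\circ\Pi^{E^4}_{4,0}\circ\Delta^G_{E^4}\bigr)\circ{}^t\Gamma_p\qquad\hbox{in}\ A^8(X\times X).\]
The cycle $M:=\Delta^G_{E^4}\circ\Gamma_A\circ\Pi^{E^4}_{4,0}\circ\Delta^G_{E^4}$ is a composition of symmetrically distinguished correspondences on $E^4\times E^4$ — composition being built from pull--back, intersection product and push--forward along the homomorphisms $(E^4)^3\to(E^4)^2$, all of which preserve $A^\ast_{sym}$ by theorem \ref{os} — hence $M$ is symmetrically distinguished; and $M$ is homologically trivial, since composing the displayed identity with ${}^t\Gamma_p$ on the left and $\Gamma_p$ on the right gives $d\,M={}^t\Gamma_p\circ(\Gamma\circ\Pi^X_{4,0})\circ\Gamma_p=0$ in $H^8(E^4\times E^4)$. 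By theorem \ref{os} a symmetrically distinguished cycle that is homologically trivial is zero, so $M=0$ in $A^8(E^4\times E^4)$, whence $\Gamma\circ\Pi^X_{4,0}=0$ in $A^8(X\times X)$; combined with the first step this proves the proposition.

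The step I expect to be the only genuine obstacle is the claim that the refined projector $\Pi^{E^4}_{4,0}$ of the abelian fourfold $E^4$ can be taken symmetrically distinguished: lemma \ref{sym} supplies this only for the degree--$2$ projector, via an explicit description of $\Pi_{2,1}$, whereas here it is needed for the ``transcendental'' piece of $H^4$. I would settle it by recalling that, for an abelian variety, Vial's refined Chow--K\"unneth projectors are built out of — and compatible with — the Fourier/Beauville grading, all of whose components lie in $A^\ast_{sym}$; this is precisely the input that promotes the ``up to $\Gamma^{\circ N}$'' conclusion of the preceding proposition to an exact vanishing. The remaining two facts used above — stability of $A^\ast_{sym}$ under composition of correspondences on powers of $E^4$, and symmetric distinction of $\Delta^G_{E^4}$ — are immediate consequences of theorem \ref{os}.
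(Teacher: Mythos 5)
Your overall strategy (lift $\Gamma\circ\Pi^X_{4,0}$ to $E^4$ and kill the resulting homologically trivial correspondence by O'Sullivan's theorem) is genuinely different from the paper's, and it hinges entirely on the one point you flag yourself: that the refined projector $\Pi^{E^4}_{4,0}$ of theorem \ref{Pi_2} can be chosen symmetrically distinguished. The justification you offer --- that Vial's refined Chow--K\"unneth projectors are ``built out of and compatible with the Fourier/Beauville grading'' --- is not correct. The Beauville/Deninger--Murre decomposition splits $h(E^4)$ according to the action of the multiplication maps $[n]$, and its degree-$4$ piece is the single projector $\Pi_4$; the further splitting $\Pi_4=\sum_j\Pi_{4,j}$ is governed by the \emph{niveau} filtration on $H^4$, a transversal Hodge-theoretic refinement that the Fourier transform does not see. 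Lemma \ref{sym}(\rom2) establishes symmetric distinguishedness only for $\Pi_{2,0},\Pi_{2,1}$, and only because $\Pi_{2,1}$ has the explicit shape $\sum_j C_j\times D_j$ with symmetric divisors; no such explicit description is invoked for $\Pi_{4,1},\Pi_{4,2}$. So, as written, your proof has a gap at its crucial step.

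The gap is fillable, but by a direct appeal to theorem \ref{os} rather than to the Fourier transform: the homological projectors $\pi_{4,j}$ of theorem \ref{pi_2} are algebraic (since $B(E^4)$ holds), and because $A^\ast_{sym}(E^4\times E^4)\to A^\ast(E^4\times E^4)/A^\ast_{hom}(E^4\times E^4)$ is an isomorphism compatible with composition of correspondences (composition is built from pullback and pushforward along the projections $(E^4)^3\to (E^4)^2$ and from intersection product, all of which preserve $A^\ast_{sym}$), the unique symmetrically distinguished lifts of the $\pi_{4,j}$ are automatically mutually orthogonal idempotents. With that substitution the rest of your argument is correct: the reduction to $\Gamma\circ\Pi^X_{4,0}=0$, the identity $d\,M={}^t\Gamma_p\circ\Gamma\circ\Pi^X_{4,0}\circ\Gamma_p$, and the descent all check out. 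For comparison, the paper sidesteps degree $4$ entirely: it passes to the intermediate quotient $X^\prime=E^4/(G^\prime)$, where $h^{2,0}(X^\prime)=1$, uses the cup-product isomorphism $H^{2,0}(X^\prime)\otimes H^{2,0}(X^\prime)\cong H^{4,0}(X^\prime)$ to convert the hypothesis into $(\Gamma^\prime)^\ast=0$ on $H^{2,0}(X^\prime)$, kills ${}^t\Gamma^\prime\circ\Pi^{X^\prime}_{2,0}$ using only the symmetrically distinguished $\Pi^A_{2,0}$ of lemma \ref{sym}(\rom2), and then transports the resulting vanishing on $A^2_{hom}(X^\prime)$ to $A^4_{(4)}(X^\prime)$ via the surjectivity of $A^2_{(2)}\otimes A^2_{(2)}\to A^4_{(4)}$ (proposition \ref{surj}), before descending to $X$ by theorem \ref{main3}. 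The price of the paper's route is the extra input of proposition \ref{surj} (ultimately Shen--Vial on $S^{[2]}$); the price of yours is the symmetric distinguishedness of the full refined degree-$4$ system, which needs the argument above rather than the one you gave.
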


\begin{proof} Let us write $A=E^4$, and $X^\prime:=A/(G^\prime)$ for the double cover of $X$ with $\dim H^{2,0}(X^\prime)=1$. The projection $g\colon X^\prime\to X$ induces an isomorphism
  \[ g^\ast\colon\ \ H^{4,0}(X)\ \xrightarrow{\cong}\ H^{4,0}(X^\prime)\ ,\]
  with inverse given by ${1\over d} g_\ast$.
 Let $\sigma^\prime_i\colon X^\prime\to X^\prime$ ($i=1,\ldots,r$) be the automorphism induced by $\sigma_i^E$.
  For each $i=1,\ldots,r$, there is a commutative diagram
  \[ \begin{array}[c]{ccc}
        H^{4,0}(X^\prime) & \xrightarrow{ (\sigma_i^\prime)^\ast}& H^{4,0}(X^\prime)\\
       {\scriptstyle g^\ast}  \uparrow \ \ \ \  && \ \ \ \ \downarrow {\scriptstyle g_\ast}\\
       H^{4,0}(X) & \xrightarrow{ (\sigma_i)^\ast}& H^{4,0}(X^)\\
       \end{array}\]
   Defining a correspondence
   \[ \Gamma^\prime=     {\displaystyle\sum_{i=1}^r}c_i \Gamma_{\sigma^\prime_i}\ \ \ \hbox{in}\ A^4(X^\prime\times X^\prime)\ ,\]    
   we thus get a commutative diagram
    \[ \begin{array}[c]{ccc}
        H^{4,0}(X^\prime) & \xrightarrow{ (\Gamma^\prime)^\ast}& H^{4,0}(X^\prime)\\
       {\scriptstyle g^\ast}  \uparrow \ \ \ \  && \ \ \ \ \downarrow {\scriptstyle g_\ast}\\
       H^{4,0}(X) & \xrightarrow{ \Gamma^\ast}& H^{4,0}(X^)\\
       \end{array}\]
      
    The assumption on $\Gamma^\ast$ thus implies that 
    \[      (\Gamma^\prime)^\ast =0\colon\ \ H^{4,0}(X^\prime)\ \to\ H^{4,0}(X^\prime)\ .\]
    Since (by construction of $X^\prime$) the cup--product map
    \[ H^{2,0}(X^\prime)\otimes H^{2,0}(X^\prime)\ \to\ H^{4,0}(X^\prime) \]
    is an isomorphism of $1$--dimensional $\C$--vector spaces, we must have that
    \[   (\Gamma^\prime)^\ast =0\colon\ \ H^{2,0}(X^\prime)\ \to\ H^{2,0}(X^\prime)\ .\]    
    It is readily seen this implies
    \begin{equation}\label{zero}  {}^t \Gamma^\prime\circ \Pi^{X^\prime}_{2,0}=0\ \ \ \hbox{in}\ H^8(X^\prime\times X^\prime)\ .\end{equation}
    
    Let $\Gamma_A$ denote the correspondence
     \[ \Gamma_A:=  {\displaystyle\sum_{i=1}^r}c_i \Gamma_{\sigma^E_i}\ \ \ \hbox{in}\ A^4(A\times A)\ .\]    
      Let $p^\prime\colon A\to X^\prime=A/(G^\prime)$ denote the quotient morphism. There are relations
    \begin{equation}\label{rel} \begin{split}   {}^t \Gamma_{\sigma^\prime} &= {1\over \vert G^\prime\vert}\ \Gamma_{p^\prime}\circ {}^t \Gamma_{A}\circ {}^t \Gamma_{p^\prime}\ \ \ \hbox{in}\ 
              A^4(X^\prime\times X^\prime)\ ,\\
                          \Pi^{X^\prime}_{2,0}&={1\over \vert G^\prime\vert}\ \ \Gamma_{p^\prime}\circ \Pi^A_{2,0}\circ {}^t \Gamma_{p^\prime}\ \ \    \hbox{in}\ 
              A^4(X^\prime\times X^\prime)\ \\
              \end{split}\end{equation}
          (the first relation is by construction of the automorphisms $\sigma_i^\prime$; the second relation can be taken as definition, cf. corollary \ref{symref}).  Plugging in these relations in equality (\ref{zero}), one obtains
        \[  \Gamma_{p^\prime}\circ {}^t \Gamma_{A}\circ {}^t \Gamma_{p^\prime}\circ \Gamma_{p^\prime}\circ \Pi^A_{2,0}\circ {}^t \Gamma_{p^\prime}=0\ \ \ \hbox{in}\ H^8(X^\prime\times X^\prime)\ .\]
       Composing with ${}^t \Gamma_{p^\prime}$ on the left and $\Gamma_{p^\prime}$ on the right, this implies in particular that
       \[  {}^t \Gamma_{p^\prime}\circ     \Gamma_{p^\prime}\circ {}^t \Gamma_{A}\circ {}^t \Gamma_{p^\prime}\circ \Gamma_{p^\prime}\circ \Pi^A_{2,0}\circ {}^t \Gamma_{p^\prime}\circ \Gamma_{p^\prime}=0\ \ \ \hbox{in}\ H^8(A\times A)\ .\]
    Using the standard relation ${}^t \Gamma_{p^\prime}\circ     \Gamma_{p^\prime}={1\over \vert G^\prime\vert}\ \sum_{g\in G^\prime} \Gamma_g$, this simplifies to
     \[  \bigl(  \sum_{g\in G^\prime}\Gamma_g\bigr)  \circ {}^t \Gamma_{A} \circ \bigl(  \sum_{g\in G^\prime}\Gamma_g\bigr)\circ \Pi^A_{2,0} =0\ \ \ \hbox{in}\ H^8(A\times A)\ .\]
     The left--hand side is a symmetrically distinguished cycle which is homologically trivial, and so it is rationally trivial (theorem \ref{os}). That is,
     \[  \bigl(  \sum_{g\in G^\prime}\Gamma_g\bigr)  \circ {}^t \Gamma_{A} \circ \bigl(  \sum_{g\in G^\prime}\Gamma_g\bigr)\circ \Pi^A_{2,0} =0\ \ \ \hbox{in}\ A^4(A\times A)\ ,\]
     in other words
     \[  {}^t \Gamma_{p^\prime}\circ     \Gamma_{p^\prime}         \circ {}^t \Gamma_{A} \circ {}^t \Gamma_{p^\prime}\circ     \Gamma_{p^\prime}  \circ \Pi^A_{2,0}=0     \ \ \ \hbox{in}\ A^4(A\times A)\ .\]
         Now we descend again to $X^\prime$ by composing some more on both sides:
    \[ \Gamma_{p^\prime}\circ {}^t \Gamma_{p^\prime}\circ     \Gamma_{p^\prime}         \circ {}^t \Gamma_{A} \circ {}^t \Gamma_{p^\prime}\circ     \Gamma_{p^\prime}  \circ \Pi^A_{2,0}\circ {}^t \Gamma_{p^\prime}=0     \ \ \ \hbox{in}\ A^4(X^\prime\times X^\prime)\ .\]
 Using the relations (\ref{rel}), this shimmers down to
  \[ ({}^t \Gamma^\prime)\circ \Pi^{X^\prime}_{2,0}=0\ \ \ \hbox{in}\ A^4(X^\prime\times X^\prime)\ .\]   
  This implies that
  \[ (\Gamma^\prime)^\ast=0\colon\ \ \ A^2_{hom}(X^\prime)\ \to\ A^2_{hom}(X^\prime)\ .\]
  Since $ A^4_{(4)}(X^\prime)$ equals the image of the intersection product $A^2_{hom}(X^\prime)\otimes A^2_{hom}(X^\prime)\to A^4(X^\prime)$ (proposition \ref{surj}), we also have that
  \[ (\Gamma^\prime)^\ast=0\colon\ \ \  A^4_{(4)}(X^\prime)\ \to\  A^4_{(4)}(X^\prime)\ .\]
  The commutative diagram
   \[ \begin{array}[c]{ccc} 
       A^4_{(4)}(X^\prime)   &\xrightarrow{(\Gamma^\prime)^\ast}& A^4_{(4)}(X^\prime) \\
       {\scriptstyle g^\ast}\uparrow\ \ \ \ &&  \ \ \ \ \uparrow  {\scriptstyle g^\ast}\\
       A^4_{hom}(X)   &\xrightarrow{\Gamma^\ast}& \ \ A^4_{hom}(X)\ , \\
       \end{array}\]
     in which vertical arrows are isomorphisms (proof of theorem \ref{main3}), now implies that
     \[ \Gamma^\ast=0\colon\ \ \ A^4_{hom}(X)\ \to\ A^4_{hom}(X)\ .\]
       \end{proof}

\subsection{Voisin conjecture}
\label{ssv}

Motivated by the Bloch--Beilinson conjectures, Voisin formulated the following conjecture:

\begin{conjecture}[Voisin \cite{V9}]\label{conjVois} Let $X$ be a smooth Calabi--Yau variety of dimension $n$. Let $a,a^\prime\in A^n_{hom}(X)$ be two $0$--cycles of degree $0$. Then
  \[ a\times a^\prime =(-1)^n a^\prime\times a\ \ \ \hbox{in}\ A^{2n}(X\times X)\ .\]
  \end{conjecture}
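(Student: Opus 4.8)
The plan is to prove Voisin's conjecture \ref{conjVois} for the very special EPW sextic $X$. Since $n=\dim X=4$ we have $(-1)^n=1$, so what has to be shown is that $a\times a'=a'\times a$ in $A^8(X\times X)$ for all $a,a'\in A^4_{hom}(X)$. By theorem \ref{main3} (and the dimension argument preceding it) we may identify $A^4_{hom}(X)=A^4_{(4)}(X)=A^4(M)$, where $M:=h_{4,0}(X)\in\MM_{\rm rat}$. The reduction is then purely motivic: for $a,a'\in A^4(M)$ the exterior product $a\times a'$ is fixed by the idempotent $\Pi^X_{4,0}\times\Pi^X_{4,0}$, hence lies in $A^8(M\otimes M)\subseteq A^8(X\times X)$; and pullback along the swap $\tau\colon X\times X\to X\times X$ acts on $A^8(M\otimes M)$ through the symmetry constraint $s$ of $M\otimes M$ in $\MM_{\rm rat}$, the relevant Koszul sign being $(-1)^{4\cdot 4}=+1$ because $M$ is concentrated in cohomological degree $4$. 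Writing $M\otimes M=\mathrm{Sym}^2 M\oplus\wedge^2 M$ for the splitting into the $(\pm 1)$-eigenspaces of $s$ (available since we work with $\QQ$-coefficients) and decomposing $a\times a'=u+v$ accordingly, one gets $a'\times a=\tau^\ast(a\times a')=s(a\times a')=u-v$, so that $a\times a'-a'\times a=2v\in A^8(\wedge^2 M)$. Thus it suffices to prove $A^8(\wedge^2 M)=0$.

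The crux is therefore the identification of the motive $\wedge^2 M$, which rests on the claim that $H^4_{tr}(X)=H^\ast(M)$ is only $2$-dimensional, equal to $H^{4,0}(X)\oplus H^{0,4}(X)$. To see this I would use description (a): since $X_0$ is birational to $S^{[2]}$ and $f\colon X_0\to X$ is generically $2:1$, there is a correspondence-induced injection $H^4_{tr}(X)\hookrightarrow H^4_{tr}(X_0)=H^4_{tr}(S^{[2]})$; feeding the blow-up description $H^4(S^{[2]})\cong\bigl[H^4(S\times S)\oplus H^2(S)(-1)\bigr]^{\Sy_2}$ from the proof of lemma \ref{rho} into a Künneth computation — using that $S$ is $\rho$-maximal, so $H^2_{tr}(S)=T^{2,0}\oplus T^{0,2}$ has no $(1,1)$-part, and that the $(2,2)$-summand of $H^2_{tr}(S)\otimes H^2_{tr}(S)$ is algebraic on the Kummer surface $S$ (Hodge conjecture for powers of abelian surfaces, exactly as in lemma \ref{rho}) — one finds $gr^0_{\wt{N}}H^4(S^{[2]})\cong(T^{2,0}\otimes T^{2,0})\oplus(T^{0,2}\otimes T^{0,2})$, which is $2$-dimensional of Hodge type $(4,0)+(0,4)$. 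As $H^{4,0}(X)$ (of dimension $1$, since $X$ is Calabi--Yau) injects into it, $H^4_{tr}(X)$ has dimension exactly $2$ and Hodge type $(4,0)+(0,4)$. Consequently $\wedge^2 M$ has $1$-dimensional cohomology $H^{4,0}(X)\otimes H^{0,4}(X)$ of Hodge type $(4,4)$, i.e. $\wedge^2 M$ is homologically isomorphic to the Tate motive $\QQ(-4)$. Since $X$ has finite-dimensional motive (theorem \ref{epw}), so do $M$ and $\wedge^2 M$, and a finite-dimensional motive homologically isomorphic to $\QQ(-4)$ is isomorphic to $\QQ(-4)$ in $\MM_{\rm rat}$ (the by-now-standard consequence of finite-dimensionality, as in the proof of lemma \ref{equiv}). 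Hence $A^8(\wedge^2 M)=A^8(\QQ(-4))=A^4(\mathrm{Spec}\,\C)=0$, which finishes the proof.

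I expect the rank computation for $h_{4,0}(X)$ — the $2$-dimensionality of $H^4_{tr}(X)$ — to be the main obstacle; this is exactly where the maximality of the Picard number of Vinberg's $K3$ surface is indispensable (just as in proposition \ref{ghc}), since it is what prevents $\mathrm{Sym}^2 H^2_{tr}(S)$ from contributing extra transcendental classes in degree $4$. A secondary point requiring care is the sign bookkeeping in the symmetric monoidal structure on $\MM_{\rm rat}$: one must check that it is genuinely the $\wedge^2$-summand (and not the $\mathrm{Sym}^2$-summand) of $M\otimes M$ that measures the defect of commutativity of the exterior product, and here the evenness of $\dim X$ trivialises the Koszul sign and produces the $+$ sign in the conclusion, consistently with $(-1)^n=1$. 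I would close by remarking that the same argument applies verbatim to $X^\prime=E^4/(G^\prime)$, and, for the deepest piece $A^4_{(4)}$ of the group of $0$-cycles, to the hyperk\"ahler fourfolds $X_0$ of theorem \ref{epw}.
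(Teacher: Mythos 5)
You are not, of course, proving conjecture \ref{conjVois} in general but its specialization to the very special EPW sextic (where $n=4$ and $(-1)^n=+1$), i.e. proposition \ref{prV}; that is also all the paper does. Your route, however, is genuinely different from the paper's. The paper's proof is very short: the finite cover $p\colon E^4\to X$ gives a split injection $p^\ast\colon A^4_{hom}(X)\to A^4_{(4)}(E^4)$ compatible with exterior products, and the symmetry $c\times c^\prime=c^\prime\times c$ for $c,c^\prime$ in the deepest Beauville piece of an abelian fourfold is exactly \cite[Example 4.40]{Vo}; so everything is delegated to description (b). You instead work with the motive $M=h_{4,0}(X)$, split $M\otimes M=\mathrm{Sym}^2M\oplus\wedge^2M$, and reduce to $A^8(\wedge^2M)=0$ via the $2$--dimensionality of $H^4_{tr}(X)$; that dimension count is correct and is essentially the content of lemma \ref{rho} and proposition \ref{ghc}, so it is description (a) that enters. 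Your argument is longer but isolates exactly what is used: it would apply to any fourfold with finite--dimensional motive, $h^{4,0}=1$ and $2$--dimensional $H^4_{tr}$, without invoking an abelian cover. The sign bookkeeping is also correct: everything lives in even degree, so the commutativity constraint realizes as the plain swap and the defect of commutativity does land in the $\wedge^2$--summand.

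One step must be made explicit. From ``$H^\ast(\wedge^2M)$ is $1$--dimensional of Hodge type $(4,4)$'' you conclude that $\wedge^2M$ is homologically isomorphic to $\QQ(-4)$. This does not follow from the Hodge type alone: to produce morphisms $\QQ(-4)\to\wedge^2M\to\QQ(-4)$ in $\MM_{\rm hom}$ composing to the identity, you need the line $\wedge^2H^4_{tr}(X)\subset H^8(X\times X)$ to be spanned by an \emph{algebraic} class, not merely a Hodge class --- an instance of the Hodge conjecture for $X\times X$. (The nondegeneracy of the intersection form on this line is fine, since $\langle\omega,\bar\omega\rangle\neq 0$.) This algebraicity does hold here, because $X\times X$ is dominated by $E^8=(E^2)^4$ and the Hodge conjecture is known for powers of abelian surfaces (\cite{Ab}, \cite{Ab2}, the same input used in lemma \ref{rho}); but as written this is a gap. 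Once it is supplied, finite--dimensionality upgrades the homological isomorphism to one in $\MM_{\rm rat}$, $A^8(\wedge^2M)=A^4(\mathrm{Spec}\,\C)=0$, and the rest of your argument goes through.
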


It seems reasonable to expect this conjecture to go through for Calabi--Yau's that are quotient varieties.
In particular, conjecture \ref{conjVois} should be true for all EPW sextics that are quotient varieties. We can prove this for the very special EPW sextic:

\begin{proposition}\label{prV} Let $X$ be the very special EPW sextic. Let $a,a^\prime\in A^4_{hom}(X)$. Then
 \[ a\times a^\prime = a^\prime\times a\ \ \ \hbox{in}\ A^8(X\times X)\ .\]
 \end{proposition}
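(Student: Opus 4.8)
The plan is to descend everything to the abelian variety $A:=E^4$ and there reduce to the one–dimensional case of Voisin's conjecture. First I would use that $X=A/G$ and the quotient morphism $p\colon A\to X$ is finite surjective of degree $d=\vert G\vert$, so that $(p\times p)_\ast\circ(p\times p)^\ast=d^2\cdot\mathrm{id}$ and hence $(p\times p)^\ast\colon A^8(X\times X)\to A^8(A\times A)$ is injective; it therefore suffices to prove $p^\ast a\times p^\ast a'=p^\ast a'\times p^\ast a$ in $A^8(A\times A)$. The crucial observation is that $p^\ast a$ and $p^\ast a'$ land in the \emph{top piece} $A^4_{(4)}(A)$ of Beauville's decomposition of $A_0(A)$: by theorem \ref{main} one has $a,a'\in A^4_{hom}(X)=A^4_{(4)}(X)=(\Pi^X_4)_\ast A^4(X)$, and since $\Pi^X_4=\tfrac1d\,\Gamma_p\circ\Pi^A_4\circ{}^t\Gamma_p$ (claim \ref{sym1}), using the relation ${}^t\Gamma_p\circ\Gamma_p=\sum_{g\in G}\Gamma_g$, lemma \ref{comm} and the $G$–invariance of $p^\ast a$ one gets $p^\ast a=(\Pi^A_4)_\ast(p^\ast a)\in A^4_{(4)}(A)$. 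Because the Beauville decomposition is a graded ring and $A_0(A)$ is generated by products $p_1^\ast[x_1]\cdot p_2^\ast[x_2]\cdot p_3^\ast[x_3]\cdot p_4^\ast[x_4]$ of pulled–back $0$–cycles, the group $A^4_{(4)}(A)$ is spanned by exterior products $e_1\times e_2\times e_3\times e_4$ with $e_i\in A^1_{hom}(E)$; so by bilinearity I may assume $b:=p^\ast a=e_1\times e_2\times e_3\times e_4$ and $b':=p^\ast a'=f_1\times f_2\times f_3\times f_4$.

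Next I would reorganise: writing $A\times A=E^8$ and letting $\rho$ be the automorphism of $E^8$ that regroups the eight factors as $(E\times E)^4$, pairing the $i$-th factor of the first $E^4$ with the $i$-th factor of the second, one has
\[ \rho_\ast(b\times b')=\bigotimes_{i=1}^4\,(e_i\times f_i),\qquad \rho_\ast(b'\times b)=\bigotimes_{i=1}^4\,(f_i\times e_i)\qquad\text{in }A^8\big((E\times E)^4\big).\]
Thus the whole statement reduces to the one–dimensional case of Voisin's conjecture: for $e,f\in A^1_{hom}(E)$ one has $f\times e=-\,e\times f$ in $A^2(E\times E)$. Granting this, each of the four tensor factors changes sign, so $\rho_\ast(b'\times b)=(-1)^4\rho_\ast(b\times b')=\rho_\ast(b\times b')$, and applying $\rho_\ast^{-1}$ finishes the argument.

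It remains to treat the curve case, which can be cited from \cite{V9} or proved directly: the cycle $z:=e\times f+f\times e$ is invariant under the transposition $\tau$ of $E\times E$, so $z=\pi^\ast\bar z$ with $\pi\colon E\times E\to\mathrm{Sym}^2E$ and $\bar z=\tfrac12\pi_\ast z$ of degree $0$; since $\mathrm{Sym}^2E$ is a $\PP^1$–bundle over $\mathrm{Pic}^2(E)\cong E$ via $\{P,Q\}\mapsto\OO_E(P+Q)$, push–forward along this bundle identifies $A_0(\mathrm{Sym}^2E)_{hom}$ with $A_0(E)_{hom}$ and carries $\bar z$ to $\tfrac12 m_\ast z$, where $m\colon E\times E\to E$ is the group law; but $m_\ast(e\times f)=e\ast f$ is a Pontryagin product of two elements of $A^1_{(1)}(E)$, hence lies in $A^1_{(2)}(E)=0$, and likewise $m_\ast(f\times e)=0$, so $m_\ast z=0$, $\bar z=0$ and $z=0$. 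The step I expect to be the real content is the passage to the top graded piece $A^4_{(4)}(A)$: it is precisely because each factor of the reduced cycle is a \emph{homologically trivial} $0$–cycle on $E$ that all four sign changes occur, and their product $(-1)^4=+1$ reproduces the sign $(-1)^{\dim X}=+1$ of Voisin's conjecture, whereas the remaining ingredients (injectivity of $(p\times p)^\ast$, the ring structure of Beauville's decomposition, and the one–dimensional case) are comparatively routine.
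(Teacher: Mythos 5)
Your proposal is correct, and its skeleton is exactly that of the paper's proof: descend along the finite quotient $p\colon A=E^4\to X$, observe (via theorem \ref{main} and the construction $\Pi^X_4=\tfrac1d\,\Gamma_p\circ\Pi^A_4\circ{}^t\Gamma_p$) that $p^\ast$ carries $A^4_{hom}(X)=A^4_{(4)}(X)$ into the deepest Beauville piece $A^4_{(4)}(A)$, and then conclude from the commutativity $c\times c'=c'\times c$ for $c,c'\in A^4_{(4)}(A)$. The one place where you diverge is the last step: the paper simply cites \cite[Example 4.40]{Vo} for this commutativity on an abelian fourfold, whereas you prove it by hand, using that $A$ is a \emph{product of elliptic curves} so that $A^4_{(4)}(E^4)$ is spanned by exterior products $e_1\times\cdots\times e_4$ with $e_i\in A^1_{hom}(E)$, regrouping the eight factors in pairs, and invoking the genus--one case $e\times f=-f\times e$ (itself reproved via $\mathrm{Sym}^2E\to\mathrm{Pic}^2E$ and the vanishing of the Pontryagin product $e\ast f$). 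Your spanning claim is legitimate: $A_0(E^4)$ is generated by points $[x_1]\times\cdots\times[x_4]$, expanding $[x_i]=[0]+e_i$ sorts the terms into $[n]_\ast$--eigenspaces, and comparison with Beauville's direct sum decomposition identifies the top piece with the span of the four--fold products of homologically trivial $0$--cycles; the sign bookkeeping $(-1)^4=+1$ then matches $(-1)^{\dim X}$ as it should. What the citation buys the paper is a statement valid for an arbitrary abelian fourfold (via the Fourier transform); what your argument buys is self--containedness and elementariness, at the price of using the special structure $A=E^4$ — which is of course available here. No gaps.
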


\begin{proof} As we have seen, there is a finite morphism $p\colon A\to X$, where $A$ is an abelian fourfold and
  \[ p^\ast\colon\ \ A^4_{hom}(X)\ \to\ A^4_{(4)}(A)=(\Pi^A_4)_\ast A^4(A) \]
  is a split injection. (The inverse to $p^\ast$ is given by a multiple of $p_\ast$.)
  Proposition \ref{prV} now follows from the following fact: any $c,c^\prime\in A^4_{(4)}(A)$ verify
    \[ c\times c^\prime= c^\prime\times c\ \ \ \hbox{in}\ A^8(A\times A)\ ;\]
    this is \cite[Example 4.40]{Vo}.
    \end{proof}

\vskip1cm

\begin{nonumberingt} The ideas developed in this note grew into being during the Strasbourg 2014---2015 groupe de travail based on the monograph \cite{Vo}. Thanks to all the participants of this groupe de travail for a stimulating atmosphere. I am grateful to Bert van Geemen and to the referee for helpful comments, and to Charles Vial for making me appreciate \cite{OS}, which is an essential ingredient in this note. 

Many thanks to Yasuyo, Kai and Len for hospitably receiving me in the Schiltigheim Math. Research Institute, where this note was written.
\end{nonumberingt}

\vskip1cm

\end{document}